\newtheorem{proposition}{Proposition}[section]
  \newtheorem{lemma}[proposition]{Lemma}
  \newtheorem{theorem}[proposition]{Theorem}
  \newtheorem{corollary}[proposition]{Corollary}
\theoremstyle{definition}
  \newtheorem{definition}[proposition]{Definition}
  \newtheorem*{definition*}{Definition}
  \newtheorem*{example*}{Example}
  \newtheorem{remark}[proposition]{Remark}
  \newtheorem*{noteadded}{Note added}
\numberwithin{equation}{section}
\def\co{c}
\def\id{\mathsf{id}}
\def\bc{\begin{center}}
\def\ec{\end{center}}
\begin{document}

\title{On Hopf monoids in duoidal categories}

\author{Gabriella B\"ohm}
\address{Wigner Research Centre for Physics, Budapest,
H-1525 Budapest 114, P.O.B.\ 49, Hungary}
\email{bohm.gabriella@wigner.mta.hu}
\author{Yuanyuan Chen}
\address{Department of Mathematics, Nanjing Agricultural University, Nanjing
210095, P.R. China.}
\author{Liangyun Zhang}
\address{Department of Mathematics, Nanjing Agricultural University, Nanjing
210095, P.R. China.}
\thanks{Y.Y. Chen participated in this research as a fellow of the Hungarian
Scholarship Board. Her work was supported also by the Innovative
Project of Jiangsu province for Graduate Cultivation. G. B\"ohm is
grateful to the other authors for a generous invitation and a warm hospitality 
at Nanjing Agricultural University in March 2012.}
\begin{abstract}
Aguiar and Mahajan's bimonoids $A$ in a duoidal category $\mathcal M$ are
studied. Under certain assumptions on $\mathcal M$, the Fundamental Theorem of
Hopf Modules is shown to hold for $A$ if and only if the unit of $A$ determines
an $A$-Galois extension. Our findings are applied to the particular
examples of small groupoids and of Hopf algebroids over a commutative base
algebra.
\end{abstract}
\keywords{duoidal category, Hopf monoid, Galois extension, Fundamental Theorem
of Hopf modules.}
\subjclass[2010]{16T05, 18D10, 18C15, 18C20}
\date{May 2013}
\maketitle

\section*{Introduction}
There are several equivalent conditions on a bialgebra $A$ (say, over a
commutative ring $k$) under which we say that it is a Hopf algebra
(see e.g. the textbooks \cite{S. Montgomery, M. Sweedler, BrzWis}):
\begin{itemize}
\item[{(i)}] The identity map $A\to A$ has an inverse --- called the antipode
--- in the convolution algebra $\mathsf{End}(A)$.
\item[{(ii)}] $A$ induces a right Hopf monad $(-)\otimes A$ on the category
$\mathcal M_k$ of $k$-modules; in the sense of \cite{A. Bruguieres}. That is,
the closed structure of $\mathcal M_k$ is lifted to the category of right
$A$-modules.
\item[{(iii)}] $A$ is an $A$-Galois extension of $k$. That is, a canonical
comonad morphism is an isomorphism.
\item[{(iv)}] The Fundamental Theorem of Hopf Modules \cite{R. Larson}
holds. That is, the category of $A$-Hopf modules is equivalent to the category
of $k$-modules.
\end{itemize}

In their monograph \cite{M. Aguiar}, Marcelo Aguiar and Swapneel Mahajan
generalized bialgebras to bimonoids in so-called duoidal categories (termed
``2-monoidal'' in their work). These are categories equipped with two
different monoidal structures. They are required to be compatible in the sense
that the functors and natural transformations defining the first monoidal
structure, are opmonoidal with respect to the second monoidal
structure. Equivalently, the functors and natural transformations defining the
second monoidal structure, are monoidal with respect to the first monoidal
structure. More details will be recalled in Section \ref{sec:duo_cat}. A
bimonoid is a monoid with respect to the first monoidal structure and a
comonoid with respect to the second monoidal structure. The compatibility
axioms are formulated in terms of the coherence morphisms between the monoidal
structures. 

A natural question arises how to define a {\em Hopf} monoid in a duoidal
category. There are at least four possibilities listed above.

The first possibility (i) does not seem applicable. Since the monoid and
comonoid structures are defined in different monoidal categories, the notion
of convolution monoid is not available.
The second possibility (ii) was taken in \cite{T. Booker} and it was
investigated in relation with the lifting of closed structures.
The aim of this paper is to study the remaining possibilities (iii) and
(iv). Under certain assumptions on the duoidal category we work in, we prove
their equivalence; and that they hold whenever (ii) does.

Let us re-visit for a moment the classical case of a bialgebra $A$ over a
commutative ring $k$. Since a bialgebra carries both an algebra and a
coalgebra structure, it induces both a monad and a comonad on the category
$\mathcal M_k$ of $k$-modules (with respective Eilenberg-Moore categories
${\mathcal M}_A$, the category of $A$-modules; and ${\mathcal M}^A$, the
category of $A$-comodules). These are in turn related by a mixed distributive
law in the sense of \cite{J. Beck}. The category of its mixed modules
${\mathcal M}^A_A$ is known as the category of $A$-Hopf modules. Moreover,
there is an associated  triangle of functors
$$
\xymatrix @C=70pt @R=50pt{
&{\mathcal M}^A_A\ar@/_.6pc/[d]_-{U^A}\ar@{}[d]|-{\dashv}\\
\mathcal M_k\ar@/^.6pc/[r]^(.58){F_A=(-)\otimes A}
\ar@{}[r]|-{\perp}
\ar[ru]^-{K=(-)\otimes A}&
{\mathcal M}_A\ar@/_.6pc/[u]_-{F^A=(-)\otimes A}
\ar@/^.6pc/[l]^-{U_A}}
$$
in which $U^AK=F_A$ (and $U^A$ and $U_A$ denote forgetful functors).

The Dubuc-Beck theory \cite{J. Beck, E. Dubuc, J. Power, M. Barr} (shortly
recalled in Section \ref{sec:Dubuc-Beck}) tells us the sufficient and
necessary conditions under which $K$ is an equivalence --- that is, the
Fundamental Theorem of Hopf Modules holds. The first step is to see that $K$
possesses a right adjoint $N$. By Dubuc's Adjoint Lifting Theorem
\cite{E. Dubuc} (see the dual form of \cite[Theorem 2.1]{J. Power}), $N$ 
is given by certain equalizers which exist by the completeness of $\mathcal
M_k$. Then there is a canonical comonad morphism 
$$
\beta:=\big(
\xymatrix@C=40pt{
F_AU_A\cong
U^AK N F^A\ar[r]^-{U^A\hat\epsilon F^A}&
U^AF^A}\big),
$$
where $\hat \epsilon$ denotes the counit of the adjunction $K\dashv N$. With
its help, $K$ is an equivalence if and only if $\beta$ is a natural
isomorphism, and $F_A$ preserves the equalizers defining the right adjoint of 
$K$ and it reflects isomorphisms (see e.g. \cite{GomTor,Mes}).

By the above reasoning it is immediate that if $K$ is an equivalence ---
i.e. above condition (iv) defining a Hopf algebra holds --- then
$\beta$ is a natural isomorphism. That is, above condition (iii) defining
a Hopf algebra $A$ holds.
Conversely, assume that $\beta$ is a natural isomorphism ---
i.e. above condition (iii) defining a Hopf algebra holds. Then in this
particular situation associated to a $k$-bialgebra, the equalizers defining
the right adjoint of $K$ turn out to be contractible, hence absolute
equalizers (with contracting maps constructed in terms of $\beta^{-1}$). Such 
equalizers are preserved by any functor, in particular by $F_A.$ This allows
to prove also that $F_A$ reflects  isomorphisms, hence $K$ is an
equivalence. That is, above condition (iv) defining a Hopf algebra $A$ holds.
(Note that since $A$ is a generator in $\mathcal M_A$,
$\beta_M\equiv M\otimes_A \beta_A$ is an isomorphism for all right $A$-modules
$M$ if and only if $\beta_A$ is an isomorphism. The equivalence of this form of
condition (iii) and condition (iv) above, can be found e.g. in \cite[Section
15.5 (b)$\Leftrightarrow$(g)]{BrzWis}.) 

In this paper we apply a similar strategy in the case of a bimonoid $A$ in a
duoidal category $\mathcal M$. In Section \ref{sec:prelims} we recall 
the Dubuc-Beck theory in a nutshell (but with precise references) and 
some basic facts about duoidal categories and their bimonoids. In
Section \ref{sec:rel_Hopf} we study the analogue of the above adjoint triangle
and the corresponding comonad morphism $\beta$ in a somewhat more general
setting: We take an $A$-comodule monoid $B$ and we consider the category of
$(A,B)$-relative Hopf modules. The isomorphism property of the corresponding
comonad morphism $\beta$ defines the notion of $A$-Galois extension $C\to B$.
In Section \ref{sec:fthm} we restrict to (non-relative) $A$-Hopf
modules. We assume that idempotent morphisms in $\mathcal M$ split and that a
canonical functor $H$ --- between the category $\mathcal M^I$ of comodules over
one monoidal unit $I$, and the category $\mathcal M_J$ of modules over the
other monoidal unit $J$ in $\mathcal M$ --- is fully faithful. 
(In the case when $\mathcal M$ is a braided monoidal category, $H$ 
is a trivial isomorphism.) Under these assumptions we prove the Fundamental
Theorem of Hopf Modules which takes now the following form: The canonical
comparison functor from $\mathcal M^I$ to the category of $A$-Hopf modules is
an equivalence if and only if the canonical comonad morphism $\beta$ is a
natural isomorphism, and if and only if $I\to A$ is an $A$-Galois extension.
The assumptions made on a duoidal category in our Fundamental
Theorem of Hopf Modules, respectively, their dual counterparts, are shown to
hold in two duoidal categories described in \cite{M. Aguiar}: In the category
of spans (over a fixed base set) and in the category of bimodules (over a
fixed commutative, associative and unital algebra). 
So as an application of the theorem, we obtain that a small category
$A$ is a Galois extension of its object set $X$ if and only if the category of
$A$-Hopf modules is equivalent to the slice category
$\mathsf{set}/X$; and if and only if $A$ is a groupoid. Similarly, if
$A$ is a bialgebroid (called a ``$\times_R$-bialgebra''
in \cite{M. Takeuchi}) --- over a commutative algebra $R$ and
such that the unit $R\otimes R\to A$ takes its values in the center of $A$
---, then the category of $A$-Hopf modules is equivalent to the category of
$R$-modules if and only if $A$ is a Hopf algebroid (in the sense
of \cite{G. Bohm} and references therein; for the commutative case see 
also \cite{C. Ravenel}).

\section{Preliminaries}\label{sec:prelims}

\subsection{The Dubuc-Beck theory.}\label{sec:Dubuc-Beck}

For later application, in this section we consider the following
situation. Let $T$ and $S$ be comonads on respective categories $\mathcal
A$ and $\mathcal B$. Denote their Eilenberg-Moore categories of comodules
(also called coalgebras, see e.g. the dual of the notion in \cite[page
88]{M. Barr}) by $\mathcal A^T$ and $\mathcal B^S$, respectively, with
forgetful functors $U^T:\mathcal A^T\to \mathcal A$ and $U^S:\mathcal
B^S \to \mathcal B$. (These functors possess right adjoints, to be denoted by
$F^T$ and $F^S$, respectively; such that $T=U^TF^T$ and $S=U^SF^S$). Assume that
there is an adjunction $L \dashv R:\mathcal B \to \mathcal A$ (with unit
$\nu:\mathcal A \to RL$ and counit $\epsilon:LR \to \mathcal B$). In what
follows, we re-collect from the literature some results concerning sufficient
and necessary conditions for the existence of an {\em equivalence} $K:\mathcal
A^T \to \mathcal B^S$ rendering commutative 
\begin{equation}\label{eq:gen_lift} 
\xymatrix{
\mathcal A^T \ar[r]^-K\ar[d]_-{U^T}& 
\mathcal B^S \ar[d]^-{U^S}\\
\mathcal A \ar[r]_-L&
\mathcal B.}
\end{equation}

Whenever the diagram \eqref{eq:gen_lift} commutes for some functor $K$, $K$ is
said to be a {\em lifting} of $L$. By \cite[Corollary 5.11]{PowWat} (or
by \cite[Proposition 1.1 and Theorem 1.2]{GomTor}), the liftings $K$ of $L$
are in a bijective correspondence with the so-called {\em comonad morphisms}
of the form $(L,\lambda)$ (this result is usually attributed to D. Applegate's
thesis from 1965). The comonad morphisms \cite{R. Street} from $T$ to $S$ are
pairs consisting of a functor $L:\mathcal A \to \mathcal B$ and a natural
transformation $\lambda:LT \to SL$ which is compatible with the comonad
structures $(\delta^T:T \to T^2, \varepsilon^T:T \to \mathcal A)$ and
$(\delta^S:S \to S^2, \varepsilon^S:S \to \mathcal B)$ in the sense of the
commutative diagrams  
$$
\xymatrix{
LT\ar[rr]^-\lambda\ar[d]_-{L\delta^T}&&
SL\ar[d]^-{\delta^S L}&&
LT\ar[r]^-\lambda\ar[d]_-{L \varepsilon^T}&
SL \ar[d]^-{\varepsilon^S L}\\
LT^2\ar[r]_-{\lambda T}&
SLT \ar[r]_-{S\lambda}&
S^2 L&&
L\ar@{=}[r]&
L.}
$$ 

Let us assume that there exists such a comonad morphism $\lambda:LT\to SL$
hence a lifting $K$ of $L$. Then there is also an induced comonad morphism
from the comonad $LTR$ (with comultiplication 
$\xymatrix{
LTR\ar[r]^-{L\delta^T R}&
LT^2R\ar[r]^-{LT\nu TR}&
LTRLTR
}$ and counit
$\xymatrix@C=8pt{
LTR\ar[rr]^-{L\varepsilon^T R}&&
LR\ar[r]^-{\epsilon}&
\mathcal B}
$) to $S$, see \cite[Theorem II.1.1]{E. Dubuc} (and also \cite[Theorem
1.2]{GomTor}). It is given by the identity functor $\mathcal B$ and the natural
transformation 
\begin{equation}\label{eq:beta_gen}
\beta=(\xymatrix{
LTR \ar[r]^-{\lambda R}&
SLR\ar[r]^-{S\epsilon}&
S}).
\end{equation}
Alternatively, using the correspondence between $\lambda$ and $K$, it can be
re-written as 
\begin{equation}\label{eq:beta_long}
\beta=(\xymatrix{
LTR=U^SKF^TR \ar[rr]^-{U^S\nu^SKF^TR}&&
U^SF^SU^SKF^TR=SLTR\ar[r]^-{SL\varepsilon^TR}&
SLR\ar[r]^-{S\epsilon}&
S}),
\end{equation}
where $\nu^S:\mathcal B^S\to F^SU^S$ is the unit of the adjunction $U^S\dashv
F^S$. 

Since at the end we want $K$ in \eqref{eq:gen_lift} to be an equivalence, it
should possess in particular a right adjoint. For the existence of this right
adjoint, we obtain sufficient and necessary conditions from Dubuc's Adjoint
Lifting Theorem. We apply it in the form which is dual to \cite[Theorem
2.1]{J. Power}. Re-draw \eqref{eq:gen_lift} in the form
\begin{equation}\label{eq:lift_tr}
\xymatrix{
&& \mathcal B^S\ar[d]^-{U^S}\\
\mathcal A^T\ar[r]_-{U^T}
\ar[rru]^-K&
\mathcal A\ar[r]_-L&
\mathcal B.}
\end{equation}
In the triangular diagram \eqref{eq:lift_tr}, both the functor in the bottom
row and that in the right vertical are left adjoints. Moreover, $U^S$ is
comonadic so in particular of the co-descent type. Hence by the dual form
of \cite[Theorem 2.1]{J. Power} (see also \cite[Theorem A.1]{E. Dubuc}
and \cite[Proposition 1.3]{GomTor}), $K$ possesses a right adjoint $N$ if and
only if for every $S$-comodule $(B,\rho:B\to SB)$, there exists the equalizer 
$$
\xymatrix{
N(B,\rho)\ar[r]&
TRB\ar@<2pt>[rrrr]^-{TR\rho}
\ar@<-2pt>[rrrr]_-{TRS\epsilon B.TR\lambda RB.T\nu TRB.\delta^T RB}&&&&
TRSB} \qquad \textrm{in}\ \mathcal A^T,
$$
providing the object map of $N$. By the uniqueness of the adjoint up-to
natural isomorphism, whenever the right adjoint $N$ of $K$ exists it obeys $N
F^S\cong F^T R$, and the counit $\hat \epsilon:KN \to \mathcal B^S$ of the
adjunction $K\dashv N$ renders commutative
$$
\xymatrix{
LTR\ar[d]_-\cong \ar[r]^-{L\varepsilon^T R}&
LR \ar[r]^-\epsilon&
\mathcal B \ar@{=}[d]\\
U^S KN F^S\ar[r]_-{U^S \hat \epsilon F^S}&
S \ar[r]_-{\varepsilon^S}&
\mathcal B.}
$$
Using this identity together with \eqref{eq:beta_long} (and with one of
the triangle identities on the adjunction $U^S\dashv F^S$), we can re-write
\eqref{eq:beta_gen} in the alternative form   
\begin{equation}\label{eq:beta_short}
\beta=(\xymatrix{
LTR\ar[r]^-\cong&
U^S KNF^S \ar[r]^-{U^S \hat \epsilon F^S}&
S})
\end{equation}
whenever the right adjoint $N$ of $K$ exists.

Finally, for any lifting $K$ of $L$ in \eqref{eq:gen_lift}, the following
assertions are equivalent (see e.g. \cite[Theorem 1.7]{GomTor}
or \cite[Theorem 4.4]{Mes}).  
\begin{itemize}
\item The functor $K$ is an equivalence.
\item The natural transformation \eqref{eq:beta_gen} is an isomorphism and 
$\xymatrix@C=12pt{
\mathcal A^T\ar[r]^-{U^T}&
\mathcal A \ar[r]^-L&
\mathcal B}$
is comonadic.
\end{itemize}
Recall that by the dual form of Beck's monadicity theorem (see e.g. \cite[page
100, Theorem 3.14]{M. Barr}), a left adjoint functor $L$ is comonadic (or
co-tripleable) if and only if it reflects isomorphisms and creates the
equalizers of $L$-contractible equalizer pairs.  

\subsection{Duoidal category.}\label{sec:duo_cat}

In this section we recall from \cite{M. Aguiar} some information about
so-called duoidal (also known as 2-monoidal) categories. These are categories
equipped with two related monoidal structures. Bimonoids in duoidal categories
and their induced bimonads are also recalled.

\begin{definition} \cite[Definition 6.1]{M. Aguiar}
A {\it duoidal category} is a category $\mathcal{M}$ equipped with two
monoidal products $\circ$ and $\bullet$ with respective units $I$ and
$J$, along with morphisms
$$
\xymatrix{I\ar[r]^-{\delta} &I\bullet I}, \ \
\xymatrix{J\circ J\ar[r]^-{\varpi}&J}, \ \
\xymatrix{I\ar[r]^-\tau &J},
$$
and, for all objects $A,B,C,D$ in $\mathcal M$, a morphism
$$
\zeta_{A,B,C,D}:(A\bullet B)\circ(C\bullet D)\rightarrow (A\circ
C)\bullet(B\circ D),
$$
called the {\it interchange law}, which is natural in all of the four
occurring objects. These morphisms are required to obey the axioms
below.

\textbf{Compatibility of the units.} The monoidal units $I$ and
$J$ are compatible in the sense that
\begin{center}
$(J,\varpi,\tau)$ is a monoid in $(\mathcal{M},\circ,I)$ and
$(I,\delta,\tau)$ is a comonoid in $(\mathcal{M},\bullet,J)$.
\end{center}

\textbf{Associativity.} For all objects $A,B,C,D,E,F$ in $\mathcal
M$, the following diagrams commute.
\begin{equation} \label{eq1.1}
\xymatrix{
((A\bullet B)\circ(C\bullet D))\circ(E\bullet F)
\ar[r]^{\cong}\ar[d]_{\zeta\circ \id}
& (A\bullet B)\circ((C\bullet D)\circ(E\bullet F))\ar[d]^{\id\circ\zeta}\\
((A\circ C)\bullet(B\circ D))\circ(E\bullet F)\ar[d]_{\zeta}
& (A\bullet B)\circ((C\circ E)\bullet(D\circ F))\ar[d]^{\zeta}\\
((A\circ C)\circ E)\bullet((B\circ D)\circ F)\ar[r]_{\cong} & (A\circ (C\circ
E))\bullet(B\circ(D\circ F))\\
((A\bullet B)\bullet C)\circ((D\bullet E)\bullet F)\ar[r]^{\cong}\ar[d]_{\zeta}
& (A\bullet (B\bullet C))\circ(D\bullet (E\bullet F))\ar[d]^{\zeta}\\
((A\bullet B)\circ(D\bullet E))\bullet(C\circ F)\ar[d]_{\zeta\bullet \id}
& (A\circ D)\bullet((B\bullet C)\circ(E\bullet F))\ar[d]^{\id\bullet\zeta}\\
((A\circ D)\bullet(B\circ E))\bullet (C \circ F)\ar[r]_{\cong}
&(A\circ D)\bullet((B\circ E)\bullet (C \circ F))}
\end{equation}

\textbf{Unitality.} For all objects $A,B$ in $\mathcal M$, the
following diagrams commute.
\begin{equation}\label{eq1.2}
\xymatrix{
I\circ(A\bullet B)   \ar[r]^-{\delta\circ \id}\ar[d]_-{\cong}
& (I\bullet I)\circ(A\bullet B)\ar[d]^-{\zeta} &
(A\bullet B)\circ I  \ar[r]^-{\id\circ \delta}\ar[d]_-{\cong}
&  (A\bullet B)\circ(I\bullet I)\ar[d]^-{\zeta}\\
A\bullet B    & (I\circ A)\bullet(I\circ B) \ar[l]^-{\cong}
&A\bullet B   & (A\circ I)\bullet(B\circ I ) \ar[l]^-{\cong}\\
(J\circ J)\bullet(A\circ B)   \ar[r]^-{\varpi\bullet \id}
& J\bullet(A\circ B)\ar[d]^-{\cong}
&(A\circ B)\bullet (J\circ J)  \ar[r]^-{\id\bullet \varpi}
& (A\circ B)\bullet J \ar[d]^-{\cong}\\
(J\bullet A)\circ(J\bullet B)   \ar[r]_-{\cong}\ar[u]^-{\zeta}
&A\circ B
&(A\bullet J)\circ (B\bullet J)\ar[r]_-\cong \ar[u]^-{\zeta}&
A\circ B}
\end{equation}
The arrows labelled by $\cong$ in the diagrams above, refer to the
associativity and the unit constraints in either monoidal category. The
same notation is used in all diagrams throughout the paper. In the formulae,
however, we denote the associatior by $\alpha$ (the unitors do not happen to
occur). 
\end{definition}

By one of the unitality axioms in \eqref{eq1.2} and unitality of the monoid
$J$, also 
\begin{equation}\label{eq:M68}
\xymatrix@C=35pt@R=15pt{
(A\bullet I)\circ (B\bullet J) \ar[r]^-\zeta\ar[d]_-\cong&
(A\circ B)\bullet (I\circ J)\ar[d]^-\cong\\
(A \bullet I)\circ B\ar[d]_-{(A\bullet \tau)\circ B}&
(A\circ B)\bullet J\ar[d]^-\cong\\
(A\bullet J)\circ B\ar[r]_-\cong&
A\circ B}
\end{equation}
and some of its symmetrical variants commute, for any objects $A$ and
$B$ of $\mathcal M$, see \cite[Proposition 6.8]{M. Aguiar}.

The simplest examples of duoidal categories are braided monoidal
categories. In this case, both monoidal products coincide and the interchange
law is induced by the braiding, see \cite[Section 6.3]{M. Aguiar}.
Generalizing bimonoids in braided monoidal categories, bimonoids can be
defined also in duoidal categories --- as monoids in the category of
comonoids in $(\mathcal M, \bullet, J)$, equivalently, as comonoids in the
category of monoids in $(\mathcal M, \circ, I)$.  
Explicitly, this means the following. 

\begin{definition}\cite[Definition 6.25]{M. Aguiar}\label{Def:bimonoid}
A {\it bimonoid} in a duoidal category $\mathcal{M}$
is an object $A$ equipped with a monoid structure $(A\circ A \stackrel \mu \to
A,I\stackrel \eta\to A)$ in $(\mathcal{M},\circ, I)$, and a comonoid structure
$(A \stackrel \Delta\to A \bullet 
A,A\stackrel \varepsilon\to J)$ in $(\mathcal{M},\bullet,J)$,
subject to the compatibility axioms
$$
\xymatrix @C=11pt{
(A\bullet A)\circ(A\bullet A)\ar[rr]^-{\zeta}
&& (A\circ A)\bullet (A\circ A)\ar[d]_-{\mu\bullet\mu}
&
A\circ A \ar[r]^-{\varepsilon\circ\varepsilon}\ar[d]^-{\mu}
& J\circ J\ar[d]_-{\varpi}
&
I \ar[r]^-{\eta}\ar[d]^-{\delta}
& A\ar[d]_-{\Delta}
&
I\ar[rd]^{\tau}\ar[d]^-{\eta}\\
A\circ A\ar[u]_-{\Delta\circ\Delta}\ar[r]_-{\mu}
&  A \ar[r]_-{\Delta}
&  A\bullet A
&
A \ar[r]_-{\varepsilon}
&  J
&
I\bullet I \ar[r]_-{\eta\bullet\eta}
&  A\bullet A
&A\ar[r]_-{\varepsilon}
&  J.}
$$
\end{definition}

Note that in particular the monoidal units $I$ and $J$ are bimonoids in
any duoidal category. 

By modules over a bimonoid $A$ in a duoidal category $\mathcal M$,
modules over the constituent monoid $A$ in $(\mathcal M, \circ, I)$ are meant.
It was observed in \cite[Section 6.6]{M. Aguiar} that the category of
$A$-modules is monoidal with respect to the monoidal product $\bullet$ and
monoidal unit $J$. This fact can be given the following equivalent formulation.

\begin{proposition}  \label{prop:bimonad}
\cite[Theorem 18]{T. Booker}
Any bimonoid $A$ in a duoidal category $\mathcal{M}$ induces a bimonad
(termed a ``Hopf monad'' in \cite{I. Moerdijk}) $(-)\circ A$
on $(\mathcal{M},\bullet,J)$.
\end{proposition}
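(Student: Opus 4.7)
The plan is to exhibit the endofunctor $T:=(-)\circ A$ as a bimonad over $(\mathcal M,\bullet,J)$, i.e.\ as a monad whose underlying functor is opmonoidal with respect to $(\bullet,J)$ and whose unit and multiplication are opmonoidal natural transformations. The monad structure is immediate: since $(A,\mu,\eta)$ is a monoid in $(\mathcal M,\circ,I)$, whiskering on the right yields a monad with multiplication $(-)\circ\mu$ and unit $(-)\circ\eta$, modulo the associator and left unitor of $\circ$.

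For the opmonoidal structure over $(\bullet,J)$ I would take
$$
T_{X,Y}\colon (X\bullet Y)\circ A\xrightarrow{\id\circ\Delta}(X\bullet Y)\circ(A\bullet A)\xrightarrow{\zeta}(X\circ A)\bullet(Y\circ A)
$$
and
$$
T_0\colon J\circ A\xrightarrow{\id\circ\varepsilon}J\circ J\xrightarrow{\varpi}J.
$$
Naturality of $T_{X,Y}$ in $X,Y$ follows from naturality of $\zeta$. Coassociativity of $T_{X,Y}$ pastes the coassociativity of $\Delta$ with one of the hexagons for $\zeta$ in \eqref{eq1.1}, and the two counitality axioms of the opmonoidal structure reduce to the unitality diagrams \eqref{eq1.2} together with counitality of $\Delta$ and the compatibility of units $(J,\varpi,\tau)$ and $(I,\delta,\tau)$; the auxiliary identity \eqref{eq:M68} is convenient here.

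It remains to show that the monad multiplication and unit of $T$ are opmonoidal. For the multiplication, the required square relates the two ways of going from $(X\bullet Y)\circ A\circ A$ to $(X\circ A)\bullet(Y\circ A)$: first multiplying in $A$ and then applying $T_{X,Y}$, versus applying $T_{X,Y}\circ T(T_{X,Y})$ and then multiplying componentwise. After moving the inner $\zeta$ past $\mu\circ\mu$ by naturality and re-associating $\circ$, this square collapses to the bimonoid compatibility $\Delta\mu=(\mu\bullet\mu)\,\zeta\,(\Delta\circ\Delta)$ from Definition \ref{Def:bimonoid}. The opmonoidality of the unit, together with the conditions on $T_0$, reduces analogously to the remaining bimonoid axioms $\varepsilon\mu=\varpi(\varepsilon\circ\varepsilon)$, $\Delta\eta=(\eta\bullet\eta)\delta$ and $\varepsilon\eta=\tau$, combined in each case with the unit-compatibility axioms of the duoidal structure.

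The step I expect to be most delicate is not conceptual but combinatorial: verifying coassociativity of $T_{X,Y}$ and multiplicativity of $\mu$ as opmonoidal transformations requires pasting the hexagons of \eqref{eq1.1} with the bimonoid axioms while repeatedly using naturality of $\zeta$ to commute a copy of $\Delta$ (respectively $\mu$) past an instance of $\zeta$. Once the correct orderings are fixed, every subdiagram commutes by a single axiom from Section \ref{sec:duo_cat} or Definition \ref{Def:bimonoid}, so no further algebraic input is needed beyond what is already recorded.
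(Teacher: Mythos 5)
Your proposal is correct and matches the paper's treatment: the paper simply cites \cite[Theorem 18]{T. Booker} and records exactly the same opmonoidal structure morphisms $T_{X,Y}=\zeta.(\id\circ\Delta)$ and $T_0=\varpi.(\id\circ\varepsilon)$ that you write down. Your outline of the coherence verifications (reducing them to the duoidal axioms \eqref{eq1.1}--\eqref{eq1.2} and the bimonoid axioms of Definition \ref{Def:bimonoid}) correctly fills in the routine checking that the paper delegates to the cited reference.
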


\begin{proof}
For later reference, we recall the forms of the structure morphisms of
the bimonad in the claim. The multiplication and unit of the monad are induced
by the multiplication and the unit of the monoid $A$, respectively. The binary
part of the opmonoidal structure is given by
$$
\xymatrix{
(M\bullet M')\circ A \ar[rr]^-{(M\bullet M')\circ\Delta}
&& (M\bullet M')\circ(A\bullet A)\ar[r]^-{\zeta}
& (M\circ A)\bullet (M'\circ A),
}
$$
for all objects $M,M'$ in $\mathcal M$. The nullary part is provided by
$
\xymatrix@C=15pt{
J\circ A \ar[r]^-{J\circ\varepsilon}
&  J\circ J \ar[r]^-{\varpi}
& J }.
$
\end{proof}

Following the terminology of \cite{A. Bruguieres}, a bimonad $T$ --- on
a monoidal category $\mathcal M$ with monoidal product $\otimes$ --- is 
called a {\em right Hopf monad} whenever
$$
\xymatrix @C=20pt
{ T(TM\otimes M')\ar[r]^-{T_2}
& T^{2}M\otimes TM'\ar[rr]^-{\mu M\otimes TM'}
&& TM\otimes TM' }
$$
is a natural isomorphism, equivalently, by \cite[Theorem 2.15]{A. Bruguieres},  
\begin{equation}
\beta_{Q,M'}:=\big(
\xymatrix{
T(Q\otimes M') \ar[r]^-{T_2}
& TQ\otimes TM' \ar[r]^-{\gamma\otimes TM'}
& Q\otimes TM' } \big)
\label{eq1.5}
\end{equation}
is a natural isomorphism (where $\mu$ is the multiplication of the monad
$T$, $T_2$ is the binary part of the opmonoidal structure, $M,M'$ are
objects in $\mathcal{M}$ and $(Q,\gamma)$ is an object in the Eilenberg-Moore
category $\mathcal{M}^T$). 

For a bimonoid $A$ in a duoidal category $\mathcal{M}$, consider the
induced bimonad $(-)\circ A$ in Proposition \ref{prop:bimonad}. The
corresponding canonical morphism (\ref{eq1.5}) takes the explicit form
\begin{eqnarray}
\label{eq1.6}&&\\
\xymatrix@C=15pt{
(Q\bullet M')\circ A \ar[rr]^-{(Q\bullet M')\circ\Delta}
&& (Q\bullet M')\circ(A\bullet A) \ar[r]^-{\zeta}
&  (Q\circ A)\bullet(M'\circ A)\ar[rr]^-{\gamma\bullet(M'\circ A)}
&& Q\bullet(M'\circ A). }
\nonumber
\end{eqnarray}

\section{Relative Hopf modules}\label{sec:rel_Hopf}

The aim of this section is to develop the notion of Galois extension by
a bimonoid in a duoidal category. This requires several steps. As in the case
of bialgebras --- over a field or, more generally, in a braided monoidal
category --- we start with defining a comodule monoid over a bimonoid $A$;
this is a monoid $B$ in the monoidal category of $A$-comodules. Relative Hopf
modules are then $B$-modules in the category of $A$-comodules. A comodule
monoid is shown to induce a functor to the category of relative Hopf
modules. Whenever this functor possesses a right adjoint, we can regard 
this right adjoint as the functor taking the `coinvariant part' of relative
Hopf modules. In particular, we can consider the coinvariant part $B^{\co}$ of
$B$ itself. It turns out that it admits a monoid structure and a monoid
morphism to $B$. This defines the notion of an $A$-extension $B^{\co} \to
B$. Finally, assuming that certain coequalizers exist and are preserved, 
we associate to any $A$-extension an adjoint triangle. Whenever the
corresponding comonad morphism is iso, we say that the $A$-extension in
question is a Galois extension. 

\subsection{Comodule monoids and relative Hopf modules}

For a bimonoid $A$ in a duoidal category $\mathcal M$, we denote by
${\mathcal M}^A$ the category of $A$-comodules; that is, the category of
comodules over the constituent comonoid $A$ in $(\mathcal M, \bullet,
J)$. Recall from \cite[Section 6.6]{M. Aguiar}, that ${\mathcal M}^A$ is a
monoidal category via the monoidal product $\circ$ and the monoidal unit
$I$. That is to say, the forgetful functor ${\mathcal M}^A \to \mathcal M$ is
strict monoidal.

\begin{definition}
A {\it right comodule monoid} over bimonoid $A$ in a duoidal category
$\mathcal M$, is a monoid $B$ in ${\mathcal M}^A$. Explicitly,
this means that there is a coassociative and counital coaction
$\rho:B\rightarrow B\bullet A$ and an associative multiplication $\mu:B\circ
B \to B$ with unit $\eta:I\to B$ such that the following diagrams commute.
\begin{equation}
\xymatrix@C=20pt{
B\circ B \ar[r]^-{\mu} \ar[d]_-{\rho\circ\rho}  &
B \ar[r]^-{\rho}&
B\bullet A
&
I \ar[rr]^-{\eta} \ar[d]_-{\delta}
&&  B \ar[d]^-{\rho}\\
(B\bullet A)\circ(B\bullet A) \ar[rr]_-{\zeta}
&&  (B\circ B)\bullet (A\circ A)\ar[u]_-{\mu\circ\mu}
&
I\bullet I \ar[rr] _-{\eta\bullet \eta}
&&  B\bullet A}
\label{eq2.1}
\end{equation}
\end{definition}

Any bimonoid $A$ is a comodule monoid over itself, via the coaction provided
by the comultiplication. The multiplication and the unit of $A$ are
$A$-comodule morphisms by the first and by the third axiom in
Definition \ref{Def:bimonoid}, respectively.

Since a right comodule monoid $B$ over a bimonoid $A$ in a duoidal
category $\mathcal M$ is defined as a monoid in $({\mathcal M}^A,\circ,I)$,
it induces a monad $(-)\circ B$ on ${\mathcal M}^A$ (lifted from the
monad $(-)\circ B$ on ${\mathcal M}$). Equivalently, the
comonad $(-)\bullet A$ on $\mathcal M$ lifts to a comonad on the category
${\mathcal M}_B$ of right $B$-modules. These liftings correspond to the
mixed distributive law (in the sense of \cite{J. Beck})
$$
\xymatrix @C=15pt{
(M\bullet A)\circ B\ar[rr]^-{(M\bullet A)\circ \rho}
&& (M\bullet A)\circ(B\bullet A)\ar[r]^-{\zeta}
& (M\circ B)\bullet (A\circ A) \ar[rr]^-{(M\circ B)\bullet\mu}
&&(M\circ B)\bullet A,}
$$
for any object $M$ of $\mathcal M$. (For more on the connection between
distributive laws and liftings, we refer to \cite{PowWat}.) 

\begin{definition}\label{def:Hopf_mod}
Consider a bimonoid $A$ in a duoidal category $\mathcal M$ and a right
$A$-comodule monoid $B$. By (right-right) {\em $(A,B)$-relative Hopf modules}
we mean modules for the monad $(-)\circ B$ on ${\mathcal M}^A$; equivalently,
comodules over the comonad $(-)\bullet A$ on ${\mathcal M}_B$.
Explicitly, this means a triple $(X,\gamma:X\circ B \to X,\rho:X\to
X\bullet A)$, where $\gamma$ is an associative and unital action, $\rho$ is a
coassociative and counital coaction such that the following compatibility
condition holds.
\begin{equation}
\xymatrix{
X\circ B \ar[r]^{\gamma} \ar[d]_-{\rho\circ\rho}
&  X  \ar[r]^-{\rho}
& X\bullet A  \\
(X\bullet A)\circ(B\bullet A) \ar[rr]_-{\zeta}
&&  (X\circ B)\bullet(A\circ A) \ar[u]_-{\gamma\bullet \mu}}
\label{eq2.2}
\end{equation}
Morphisms of $(A,B)$-relative Hopf modules are morphisms of both
$A$-comodules and $B$-modules. The category of right-right $(A,B)$-relative
Hopf modules is denoted by $\mathcal{M}^A_B$.
\end{definition}

Clearly, an $A$-comodule monoid $B$ is itself an $(A,B)$-relative
Hopf module via the action provided by the multiplication.
The compatibility between this action and the $A$-coaction on $B$ holds by the
requirement that the multiplication in $B$ is a morphism of
$A$-comodules.

In the case when $B$ is equal to $A$, the resulting category
$\mathcal{M}_A ^{A}$ is called the category of {\em $A$-Hopf modules}.

\subsection{Coinvariants}\label{sec:coinv}

If $A$ is a bialgebra over a field --- or more generally, a bimonoid in
a braided monoidal category $\mathcal M$ --- and $B$ is a right $A$-comodule
monoid, then there is a lifting of the functor $(-) \otimes B: \mathcal M \to
{\mathcal M}_B$ to $\mathcal M \to {\mathcal M}^A_B$. Whenever appropriate
equalizers in $\mathcal M$ exist, the lifted functor possesses a right adjoint
known as the `$A$-coinvariants' functor. In what follows, we look for the
analogue of this adjunction in the duoidal setting.

As the first crucial difference from the classical case, the functor
$(-) \circ B: \mathcal M \to {\mathcal M}_B$ will be lifted now to ${\mathcal
M}^I \to {\mathcal M}^A_B$ (which means a lifting to $\mathcal M \to {\mathcal
M}^A_B$, up-to isomorphism, if $\mathcal M$ is a braided monoidal category).

\begin{proposition}\label{prop:rel_Hopf_lifting}
Consider a bimonoid $A$ in a duoidal category $\mathcal M$ and a right
$A$-comodule monoid $B$.
Then the evident functor $(-)\circ B:\mathcal M \to {\mathcal M}_B$
has a lifting to ${\mathcal M}^I \to {\mathcal M}^A_B$.
\end{proposition}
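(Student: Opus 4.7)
The plan is to send each $I$-comodule $(M, \rho_M : M \to M \bullet I)$ to the object $M \circ B$ equipped with the evident $B$-action $M \circ \mu$ and with the $A$-coaction
$$\rho_{M \circ B} := \bigl(M \circ B \xrightarrow{\rho_M \circ \rho} (M \bullet I) \circ (B \bullet A) \xrightarrow{\zeta} (M \circ B) \bullet (I \circ A) \cong (M \circ B) \bullet A\bigr),$$
and to send a morphism $f : (M, \rho_M) \to (M', \rho_{M'})$ in $\mathcal M^I$ to $f \circ B : M \circ B \to M' \circ B$. The latter is manifestly $B$-linear, and its $A$-colinearity is immediate from naturality of $\zeta$ together with the $I$-colinearity of $f$; so once the object assignment is shown to land in $\mathcal M^A_B$, functoriality is automatic, and the resulting functor will automatically be a lifting of $(-)\circ B$ since $U^A (M\circ B) = M\circ B$.

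First I would verify that $\rho_{M \circ B}$ is coassociative and counital. Coassociativity unfolds into a diagram resolved by the first associativity axiom of \eqref{eq1.1} applied to the quadruple $(M, I, B, A)$, combined with coassociativity of $\rho_M$ (with respect to $\delta : I \to I \bullet I$) and of $\rho$ (with respect to $\Delta : A \to A \bullet A$). Counitality asks that $((M \circ B) \bullet \varepsilon) \circ \rho_{M \circ B}$ be the right $\bullet$-unitor inverse on $M \circ B$; after using counitality of $\rho$ to identify $(B \bullet \varepsilon) \circ \rho$ with the unitor inverse on $B$, and pushing $\varepsilon$ past $\zeta$ by naturality, this reduces to an instance of \eqref{eq:M68}, from which one concludes via $(M \bullet \tau) \circ \rho_M = $ unitor inverse on $M$. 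Both verifications are routine diagram chases.

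The main step --- and the only real obstacle --- is the Hopf-module compatibility \eqref{eq2.2} between the action $M \circ \mu$ and the coaction $\rho_{M \circ B}$. Writing both composites out on $(M \circ B) \circ B \to (M \circ B) \bullet A$ and substituting the definition of $\rho_{M \circ B}$, the equality opens up into a diagram on $(M \bullet I) \circ (B \bullet A) \circ (B \bullet A)$ involving two nested interchange laws. Re-bracketing these via the first associativity axiom of \eqref{eq1.1} and then contracting the two $A$-factors by multiplicativity of $\rho$ (the left square of \eqref{eq2.1}), together with associativity of $\mu$, closes the diagram. I expect this chase to be mechanical but moderately long, needing no hypothesis on $\mathcal M$ beyond the duoidal axioms and the comodule-monoid axioms.
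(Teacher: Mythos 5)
Your proposal is correct, and the coaction you place on $M\circ B$ is exactly the one the paper uses; the difference is purely one of packaging. The paper does not verify the Hopf-module axioms on each $M\circ B$ directly: it defines the natural transformation $\lambda^0_M:(M\bullet I)\circ B\to (M\circ B)\bullet A$ of \eqref{eq:lambda^0} on \emph{every} object $M$ of $\mathcal M$, checks that it is a $B$-linear comonad morphism from $(-)\bullet I$ to $(-)\bullet A$ (naturality, $B$-linearity, comultiplicativity, counitality), and then invokes the general bijection between comonad morphisms and liftings recalled in Section \ref{sec:Dubuc-Beck}. Your coaction is precisely $\lambda^0_M.(\rho_M\circ B)$, and your verifications correspond one-to-one to the paper's: coassociativity of the coaction $\leftrightarrow$ comultiplicativity of $\lambda^0$, counitality $\leftrightarrow$ counitality of $\lambda^0$, the Hopf compatibility \eqref{eq2.2} $\leftrightarrow$ $B$-linearity of $\lambda^0$, and $A$-colinearity of $f\circ B$ $\leftrightarrow$ naturality of $\lambda^0$; the same axioms (\eqref{eq1.1}, \eqref{eq1.2} respectively \eqref{eq:M68}, and the first square of \eqref{eq2.1}) drive both. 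What the abstract route buys is reuse: $\lambda^0$, being defined on all of $\mathcal M$ rather than only on $I$-comodules, is needed again verbatim in Proposition \ref{prop:lambda} and in \eqref{eq:beta0M}, whereas your direct argument, while sufficient here, would have to be re-done there. One small correction to your sketch: the coherence axiom resolving coassociativity of the coaction is the \emph{second} diagram of \eqref{eq1.1} (the one re-bracketing $\bullet$-products, applied to $((M\bullet I)\bullet I)\circ((B\bullet A)\bullet A)$), not the first; the first ($\circ$-rebracketing) diagram is the one you correctly invoke for the compatibility \eqref{eq2.2}.
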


\begin{proof}
The relevant comonad morphism (in the sense of \cite{R. Street}, 
cf. \cite[Corollary 5.11]{PowWat}, \cite[Proposition 1.1 and
Theorem 1.2]{GomTor} or Section \ref{sec:Dubuc-Beck} in the current
paper) is given by
\begin{equation}\label{eq:lambda^0}
\lambda^0_M:
\xymatrix{
(M\bullet I)\circ B\ar[r]^-{(M\bullet I)\circ \rho}
& (M\bullet I)\circ(B\bullet A)\ar[r]^-{\zeta}
& (M\circ B)\bullet(I\circ A)\ar[r]^-{\cong}
& (M\circ B)\bullet A,}
\end{equation}
for any object $M$ of $\mathcal M$.
It is evidently natural in $M$. 
It follows by the first identity in (\ref{eq2.1}), by one
of the associativity axioms in \eqref{eq1.1} and by naturality of
$\zeta$ and of the associativity and unit constraints that $\lambda_M^0$ is a
morphism of $B$-modules. 
Comultiplicativity and counitality of $\lambda^0$; that is, commutativity
of the diagrams
$$
\scalebox{.98}{
\xymatrix @C=15pt @R=10pt{
(M\bullet I)\circ B \ar[rr]^{\lambda^0_M}\ar[d]^-{(M\bullet \delta)\circ B}  &&
(M\circ B)\bullet A\ar[d]_-{(M\circ B)\bullet\Delta}
&
(M\bullet I)\circ B \ar[r]^-{\lambda^0_M} \ar[d]^(.4){(M\bullet \tau)\circ B}
& (M\circ B)\bullet A \ar[d]_(.6){(M\circ B)\bullet \varepsilon}
\\
(M\bullet (I\bullet I)) \circ B \ar[d]^-{\cong}
&& (M\circ B)\bullet(A\bullet A)\ar[d]_-{\cong}
&
(M\bullet J)\circ B \ar[d]^-{\cong}
& (M\circ B)\bullet J \ar[d]_-{\cong}\\
((M\bullet I)\bullet I)\circ B \ar[r]_-{\lambda^0_{M\bullet I}}
& ((M\bullet I)\circ B)\bullet A\ar[r]_-{\lambda^0_M\bullet A}
& ((M\circ B)\bullet A)\bullet A
&
M\circ B \ar@{=}[r]
&  M\circ B}}
$$
follows by coassociativity and counitality of $\rho$, unitality of the monoid
$(J,\varpi,\tau)$, naturality of $\zeta$ and of the unit constraints, one of
the associativity axioms, see \eqref{eq1.1}, and two of the unitality axioms
in a duoidal category, see \eqref{eq1.2}. 
\end{proof}

For any right $I$-comodule $Z$, the $A$-coaction on $Z\circ B$ induced
by the comonad morphism \eqref{eq:lambda^0} is 
$$
\xymatrix@C=20pt{
Z\circ B\ar[r]^-{\rho\circ B}&
(Z\bullet I)\circ B\ar[r]^-{\lambda^0_Z}&
(Z\circ B)\bullet A.}
$$ 

Next we apply the Adjoint Lifting Theorem (in the form which is dual to
\cite[Theorem 2.1]{J. Power}) to construct the right adjoint of the lifted
functor in Proposition \ref{prop:rel_Hopf_lifting}. In the diagram 
\begin{equation}
\xymatrix{
\mathcal{M}^I \ar[rr]^-{(-)\circ B} \ar[d]_{U^I}
&&  \mathcal{M}^A_B \ar[d]^-{U^A} \\
\mathcal{M}  \ar[rr]_-{(-)\circ B}
&&  \mathcal{M}_B  }
\label{eq2.3}
\end{equation}
$U^I$ and $U^A$ are forgetful functors of comonads and the functor in the
bottom row is a left adjoint of a forgetful functor of a monad. Hence all of
them possess right adjoints, so the natural transformation $\lambda^0$
in \eqref{eq:lambda^0} has a mate
$$
\overline{\lambda}_Q = \big(
\xymatrix@C=20pt{
Q\bullet I \ar[r]^-\cong
& (Q\bullet I )\circ I  \ar[rr]^-{(Q\bullet I )\circ \eta}
&& (Q\bullet I)\circ B\ar[r]^{\lambda^0_Q}
& (Q\circ B)\bullet A\ar[r]^-{\gamma\bullet A}
& Q\bullet A} \big),
$$
for any object $(Q,\gamma)$ of ${\mathcal M}_B$.
In fact, $\overline{\lambda}_Q=Q\bullet\eta$ as the following computation
shows.
$$
\xymatrix @R=10pt{
Q\bullet I\ar[r]^-{\cong} \ar@{=}@/_2pc/[ddddr]
&  (Q\bullet I)\circ I \ar[rr]^-{(Q\bullet I)\circ \eta}
\ar[d]_-{(Q\bullet I)\circ\delta}
&&  (Q\bullet I)\circ B\ar[d]^-{(Q\bullet I)\circ\rho} \\
& (Q\bullet I)\circ (I\bullet I)
\ar[rr]^-{(Q\bullet I)\circ (\eta\bullet\eta)}\ar[d]_-{\zeta}
&& (Q\bullet I)\circ(B\bullet A)\ar[d]^-{\zeta} \\
& (Q\circ I)\bullet(I\circ I)
\ar[rr]^-{(Q\circ\eta)\bullet(I\circ\eta)}\ar[dd]_-{\cong}
&&  (Q\circ B)\bullet(I\circ A)\ar[d]^-{\cong} \\
&
&& (Q\circ B)\bullet A \ar[d]^-{\gamma\bullet A}  \\
&Q\bullet I\ar[rr]_-{Q\bullet\eta}
&& Q\bullet A }
$$
The top right square commutes since $\eta:I\to B$ is a morphism of
$A$-comodules. The region below it commutes by the naturality of $\zeta$. The
region at the bottom right commutes by unitality of the $A$-action on
$B$. Finally, the region on the left commutes by one of the unitality axioms
of a duoidal category, cf.\eqref{eq1.2}. 

Applying the Adjoint Lifting Theorem (see the dual form of \cite[Theorem
2.1]{J. Power} and Section \ref{sec:Dubuc-Beck}), we conclude that the
functor $(-)\circ B:{\mathcal M}^I \to {\mathcal M}^A_B$ in Proposition
\ref{prop:rel_Hopf_lifting} possesses a right adjoint $(-)^{\co}:{\mathcal
M}^A_B \to {\mathcal M}^I$ if and only if the equalizer
\begin{equation}\label{eq:McoA}
\xymatrix @C=30pt{
X^{\co} \ar[r]^-{\iota} & X\bullet I
\ar@<2pt>[rrr]^-{\varphi^0:=\rho\bullet I}
\ar@<-2pt>[rrr]_-{\varphi^1:=((X\bullet \eta)\bullet I).\alpha^{-1}.
(X\bullet\delta)}
&&&  (X\bullet A)\bullet I
}
\end{equation}
exists in $\mathcal{M}^I$ for any object $(X,\gamma,\rho)$ in
$\mathcal{M}^A_B$. 
We call $X^{\co}$ the {\em $A$-coinvariant part} of $X$.

\subsection{The submonoid of coinvariants}
If  the equalizers (\ref{eq:McoA}) exist in $\mathcal{M}^I$ then, in
particular, for any right comodule monoid $B$ over a bimonoid $A$, there is an
equalizer 
\begin{equation}
\xymatrix @C=30pt{
B^{\co} \ar[r]^-{\iota}
& B\bullet I \ar@<2pt>[rrr]^-{\varphi^0:=\rho\bullet I}
\ar@<-2pt>[rrr]_-{\varphi^1:=((B\bullet \eta)\bullet I).\alpha^{-1}.
(B\bullet\delta)} &&&
(B\bullet A)\bullet I}
\label{eq:BcoA}
\end{equation}
in $\mathcal{M}^I$. 

\begin{proposition}\label{prop:BcoA_monoid}
For any right comodule monoid $B$ over a bimonoid $A$ in a duoidal category
$\mathcal M$, the coinvariant part $B^{\co}$ --- whenever it exists --- is a
monoid in ${\mathcal M}^I$.
\end{proposition}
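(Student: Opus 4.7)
The plan is to construct the unit and the multiplication of $B^{\co}$ by exhibiting maps into $B\bullet I$ that equalize $\varphi^0$ and $\varphi^1$ and then factoring them through $\iota$ via the universal property of \eqref{eq:BcoA}. Throughout, since the equalizer is formed in $\mathcal M^I$, once the candidate maps into $B\bullet I$ are shown to be $I$-colinear, the resulting factorizations will automatically live in $\mathcal M^I$. Associativity and unitality will then be checked after post-composition with the monomorphism $\iota$, reducing them to the corresponding axioms for the monoid $B$.

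For the unit, I take the candidate $u_0:=(\eta\bullet I)\circ \delta:I\to B\bullet I$. Its $I$-colinearity (with respect to the $I$-coaction $B\bullet \delta$ on $B\bullet I$, modulo an associator) reduces to coassociativity of $\delta$, which holds because $I$ is a comonoid in $(\mathcal M,\bullet,J)$. To see that $u_0$ equalizes $\varphi^0$ and $\varphi^1$, I would chase the diagram using the third compatibility axiom in \eqref{eq2.1} (which gives $\rho\circ\eta=(\eta\bullet\eta)\circ\delta$) on the $\varphi^0$-side, and naturality of $\bullet$ together with coassociativity of $\delta$ on the $\varphi^1$-side. The two composites agree, yielding the required $\eta^{\co}:I\to B^{\co}$.

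For the multiplication, I take the candidate
$$
m_0 \ =\ \big(B^{\co}\circ B^{\co}\xrightarrow{\iota\circ\iota}(B\bullet I)\circ(B\bullet I)\xrightarrow{\zeta}(B\circ B)\bullet(I\circ I)\xrightarrow{\mu\bullet\cong}B\bullet I\big).
$$
Its $I$-colinearity follows from naturality of $\zeta$, the fact that $\iota$ is $I$-colinear, and the interaction of $\zeta$ with the $I$-comonoid structure ($\delta$, $\tau$). The crucial step is verifying $\varphi^0\circ m_0=\varphi^1\circ m_0$. On the $\varphi^0$-side I would use multiplicativity of $\rho$ (the first axiom in \eqref{eq2.1}) to rewrite $(\rho\bullet I)\circ(\mu\bullet\cong)$ in terms of $(\mu\bullet\mu)\circ\zeta\circ(\rho\circ\rho)$; then, since each of $(\rho\bullet I)\circ\iota$ coincides with $((B\bullet\eta)\bullet I)\circ\alpha^{-1}\circ(B\bullet\delta)\circ\iota$ by the defining equation of $B^{\co}$, I can replace the $\rho$'s by the `trivial' coactions involving $\eta:I\to A$. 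On the $\varphi^1$-side the calculation is analogous, and both sides reduce to the same expression built from $\mu$, $\eta$, $\zeta$ and $\delta$, with coherence provided by the associativity axioms \eqref{eq1.1}, the unitality axioms \eqref{eq1.2}, and the consequence \eqref{eq:M68}. This produces the desired $\mu^{\co}:B^{\co}\circ B^{\co}\to B^{\co}$.

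Finally, associativity and (left/right) unitality of $(\mu^{\co},\eta^{\co})$ are established by composing both sides of the relevant diagrams with $\iota$ and using that $\iota$ is a monomorphism in $\mathcal M^I$. After this post-composition, each diagram unfolds into an expression in $B\bullet I$ built from $\mu$, $\eta$, $\zeta$, and the unit constraints, and collapses to an identity by the monoid axioms of $B$ together with naturality of $\zeta$ and the unitality axioms \eqref{eq1.2}. The main obstacle I anticipate is the equalizer check for $m_0$: it mixes the interchange law $\zeta$, the coaction $\rho$, and the equation cutting out $B^{\co}$, so the diagram chase is long even though each individual step is dictated by one of the listed axioms.
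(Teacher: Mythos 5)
Your proposal is correct and follows essentially the same route as the paper: the same candidate unit $(\eta\bullet I).\delta$ and multiplication $(\mu\bullet\cong).\zeta.(\iota\circ\iota)$ are shown to equalize $\varphi^0,\varphi^1$ and then factored through $\iota$ by universality. The only (cosmetic) difference is that the paper gets associativity and unitality for free by observing that $(-)\bullet I$, being right adjoint to the strict monoidal (hence opmonoidal) forgetful functor $\mathcal M^I\to\mathcal M$, is a monoidal functor, so $B\bullet I$ is already a monoid in $\mathcal M^I$ and $B^{\co}$ a submonoid of it, whereas you verify these axioms by hand after post-composing with the monomorphism $\iota$.
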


\begin{proof}
First we claim that $B\bullet I$ is a monoid in ${\mathcal M}^I$.
Since $B$ is a monoid in $\mathcal{M}^A$ by definition, and the
forgetful functor ${\mathcal M}^A\to {\mathcal M}$ is strict
monoidal, it follows that $B$ is a monoid in $\mathcal{M}$.
Since $I$ is a bimonoid in $\mathcal M$, also the forgetful functor ${\mathcal
M}^I\to {\mathcal M}$ is strict monoidal so in particular opmonoidal.
Since the right adjoint of an opmonoidal functor is
monoidal, $(-)\bullet I: \mathcal M\to {\mathcal M}^I$ is monoidal. Thus
it takes the monoid $B$ in $\mathcal M$ to the monoid $B\bullet I$ in
$\mathcal{M}^I$.

Both morphisms
$$
\xymatrix @R=10pt @C=15pt{
\big(B^{\co}\circ B^{\co}\ar[r]^-{\iota\circ\iota}
& (B\bullet I)\circ(B\bullet I)\ar[r]^-{\mu}
&B\bullet I \big)=\\
\big(B^{\co}\circ B^{\co}\ar[r]^-{\iota\circ\iota}
& (B\bullet I)\circ(B\bullet I)\ar[r]^-{\zeta}
& (B\circ B)\bullet(I\circ I) \ar[r]^-{\cong}
& (B\circ B)\bullet I \ar[r]^-{\mu\bullet I} & B\bullet I\big) }
$$
and
$
\xymatrix @C=15pt{
\big(I\big.\ar[r]^-\eta
&\big. B\bullet I \big)=
\big(I \big.\ar[r]^-\delta
&I\bullet I\ar[r]^-{\eta\bullet I}
&\big. B\bullet I\big)}
$
equalize the parallel morphisms $\varphi^0$ and $\varphi^1$ in the equalizer
(\ref{eq:BcoA}). Hence we obtain the multiplication and unit of $B^{\co}$ by
universality.
\end{proof}

\begin{proposition} \label{prop:omega}
Let $B$ be a right comodule monoid over a bimonoid $A$ whose coinvariant part
$B^{\co}$ exists. Then 
$$
\omega=\big(
\xymatrix{
B^{\co} \ar[r]^-{\iota}
&  B\bullet I\ar[r]^-{B\bullet\tau}
& B\bullet J\ar[r]^-{\cong} &
B}\big)
$$
is a morphism of monoids.
\end{proposition}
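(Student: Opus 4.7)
My strategy is to factor $\omega$ as
$$
\xymatrix{
B^{\co}\ar[r]^-\iota & B\bullet I\ar[r]^-{B\bullet\tau} & B\bullet J\ar[r]^-{\cong} & B,
}
$$
observe that $\iota$ is a monoid morphism by the very construction of the monoid structure on $B^{\co}$ in Proposition \ref{prop:BcoA_monoid}, and then verify directly that the composite of the remaining two arrows is a morphism from the monoid $B\bullet I$ of Proposition \ref{prop:BcoA_monoid} to the monoid $B$. Equivalently (and this is the form I will carry out), I use the universal property of the equalizer $\iota$ to reduce the statement to two identities in $\mathcal M$, one for unit preservation and one for multiplication preservation, and check each using the duoidal axioms.

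\textbf{Unit.} The proof of Proposition \ref{prop:BcoA_monoid} shows $\iota\circ\eta_{B^{\co}}=(\eta\bullet I)\circ\delta$. Hence
$$
\omega\circ\eta_{B^{\co}} \;=\; \cong\circ(B\bullet\tau)\circ(\eta\bullet I)\circ\delta \;=\; \cong\circ(\eta\bullet J)\circ(I\bullet\tau)\circ\delta
$$
by naturality of $\bullet$. The compatibility of units requires $(I,\delta,\tau)$ to be a comonoid in $(\mathcal M,\bullet,J)$, so $(I\bullet\tau)\circ\delta$ equals the right unit constraint $I\to I\bullet J$, and the whole expression collapses to $\eta_B$, as desired.

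\textbf{Multiplication.} Again by the proof of Proposition \ref{prop:BcoA_monoid},
$$
\iota\circ\mu_{B^{\co}} \;=\; (\mu\bullet I)\circ{\cong}\circ\zeta\circ(\iota\circ\iota),
$$
where the middle isomorphism is induced by the unitor $I\circ I\cong I$. Applying $\omega$ and sliding $B\bullet\tau$ past $\mu\bullet I$ via naturality of $\bullet$, I obtain
$$
\omega\circ\mu_{B^{\co}} \;=\; \mu\circ{\cong}\circ\bigl((B\circ B)\bullet\tau\bigr)\circ{\cong}\circ\zeta\circ(\iota\circ\iota).
$$
On the other side, by bifunctoriality of $\circ$,
$$
\mu_B\circ(\omega\circ\omega) \;=\; \mu\circ({\cong}\circ{\cong})\circ\bigl((B\bullet\tau)\circ(B\bullet\tau)\bigr)\circ(\iota\circ\iota).
$$
Thus it suffices to establish the identity of maps $(B\bullet I)\circ(B\bullet I)\to B\circ B$
$$
{\cong}\circ\bigl((B\circ B)\bullet\tau\bigr)\circ{\cong}\circ\zeta_{B,I,B,I} \;=\; ({\cong}\circ{\cong})\circ\bigl((B\bullet\tau)\circ(B\bullet\tau)\bigr).
$$
This follows from two applications of \eqref{eq:M68} (and its symmetric variant), one for each $I$-slot: the first turns the second tensorand $B\bullet I$ into $B\bullet J\cong B$, and the second does the same to the first tensorand, both replacing an instance of $\tau$ after $\zeta$ by an instance of $\tau$ before $\zeta$.

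\textbf{Expected obstacle.} There is no conceptual obstacle; the whole argument is bookkeeping of unit constraints, naturality of $\zeta$, and the axioms \eqref{eq1.2} and \eqref{eq:M68}. The only delicate point is keeping the nest of unit isomorphisms $I\circ I\cong I$ and $B\bullet J\cong B$ straight while applying \eqref{eq:M68} twice; writing the diagram explicitly makes this routine.
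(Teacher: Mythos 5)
Your proposal is correct and follows essentially the same route as the paper: $\iota$ is a monoid morphism by the construction in Proposition \ref{prop:BcoA_monoid}, and the remaining composite $B\bullet I\to B$ is checked to preserve unit (via counitality of $(I,\delta,\tau)$) and multiplication (via naturality of $\zeta$ and the unit axioms). The only cosmetic difference is that for multiplicativity you invoke \eqref{eq:M68}, whereas the paper argues directly from unitality of the monoid $(J,\varpi,\tau)$ and the third unitality diagram in \eqref{eq1.2} --- the very identities from which \eqref{eq:M68} is derived --- so the two verifications coincide.
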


\begin{proof}
The monomorphism $\iota:B^{\co}\to B\bullet I$ is a morphism of monoids
by construction.  The fact that also
$
\xymatrix@C=15pt{
B\bullet I\ar[r]^-{B\bullet\tau}
& B\bullet J\ar[r]^-{\cong}
& B }
$
is a morphism of monoids, follows from the commutativity of
$$
\xymatrix@R=15pt @C=35pt{
(B\bullet I)\circ(B\bullet I)
\ar[rr]^-{(B\bullet\tau)\circ(B\bullet \tau)}\ar[d]_-{\zeta}
&& (B\bullet J)\circ(B\bullet J)\ar[r]^-{\cong}\ar[d]_-{\zeta}
&  B\circ B \ar[ddd]^-{\mu}\\
(B\circ B)\bullet (I\circ I)
\ar[rr]^-{(B\circ B)\bullet(\tau\circ\tau)}
\ar[d]_-{\cong}
&& (B\circ B)\bullet(J\circ J)\ar[d]_-{(B\circ B)\circ\varpi}  &  \\
(B\circ B)\bullet I \ar[d]_-{\mu \bullet I} \ar[rr]^-{(B\circ B)\bullet\tau}
&& (B\circ B)\bullet J\ar[d]_-{\mu\bullet J}\ar@/_2pc/[uur]^-{\cong} & \\
B\bullet I  \ar[rr]_-{B\bullet\tau}
&& B\bullet J \ar[r]_-{\cong}
&  B}
$$
--- where the middle square on the left commutes by unitality of the monoid
$(J,\varpi,\tau)$ and the region at the top right commutes by one of the
unitality axioms in \eqref{eq1.2} --- and 
$$
\xymatrix{
I   \ar@/^2pc/[ddrr]^-{\eta}\ar[rd]^-{\cong} \ar[d]_-{\delta} && \\
I\bullet I\ar[r]^-{I\bullet\tau} \ar[d]_-{\eta\bullet I}
& I\bullet J\ar[d]^-{\eta\bullet J} & \\
B\bullet I \ar[r]_-{B\bullet\tau} & B\bullet J \ar[r]_-{\cong}
& B}
$$
--- where the top left triangle commutes by the counitality of the comonoid
$(I,\delta,\tau)$. 
\end{proof}

The morphism $\omega$ in Proposition \ref{prop:omega} induces a left
$B^{\co}$-action
$\gamma:=\big(
\xymatrix@C=18pt{
B^{\co}\circ B\ar[r]^-{\omega\circ B}
&  B\circ B \ar[r]^-{\mu}
& B }\big)
$.
If the coequalizer
\begin{equation}\label{eq:BcoA_product}
\xymatrix@C=35pt{
(P\circ B^{\co})\circ B 
\ar@<2pt>[rr]^-{\gamma\circ B}
\ar@<-2pt>[rr]_-{(P\circ\gamma).\alpha}
&&  P\circ B \ar[r]^{\pi_{P,B}}
&  P\circ_{B^{\co}} B }
\end{equation}
in $\mathcal M$ exists --- for any right $B^{\co}$-module $(P,\gamma)$ 
--- and any power of $(-)\circ B:\mathcal M\to \mathcal M$ preserves 
the coequalizers (\ref{eq:BcoA_product}), then there is an adjunction
$$
\xymatrix@C=40pt{
\mathcal{M}_{B^{\co}} \ar@{->}@/^1pc/[rr]^{\omega_\ast=(-)\circ_{B^{\co}}B}
\ar@{}[rr]|-{\perp}
&& \mathcal{M}_B,\ar@{->}@/^1pc/[ll]^{\omega^\ast} }
$$
see \cite{Par}. The functor $\omega^*$ takes a right $B$-module $(Q,\gamma)$
to the $B^{\co}$-module $(Q,\gamma.(Q\circ \omega))$, and it acts on the
morphisms as the identity map. For any right $B^{\co}$-module $P$, the
$B$-action on $P\circ_{B^{\co}} B$ is constructed using the universality of
the coequalizer 
\begin{equation}
\xymatrix@C=35pt @R=15pt{
((P\circ B^{\co})\circ B)\circ B
\ar@<2pt>[rr]^-{(\gamma\circ B)\circ B}
\ar@<-2pt>[rr]_-{((P\circ\gamma).\alpha)\circ B}
&&(P\circ B)\circ B  \ar[d]_-\cong\ar[r]^-{\pi_{P,B}\circ B}
&  (P\circ_{B^{\co}}B)\circ B
\ar@{..>}@/1.5pc/[dd]^-\gamma\\
&&P\circ(B\circ B)\ar[d]_-{P\circ\mu}   \\
&&P\circ B\ar[r]_-{\pi_{P,B}}
& P\circ_{B^{\co}}B}
\label{eq2.7}
\end{equation}
Thus $\pi_{P,B}$ is a morphism of $B$-modules by construction.

By Proposition \ref{prop:BcoA_monoid}, $B^{\co}$ is a monoid in
$\mathcal{M}^I$ whenever it exists. We denote by ${\mathcal M}_{B^{\co}}^I$
the category of $B^{\co}$-modules in ${\mathcal M}^I$.

\begin{proposition} \label{prop:lambda}
For a right comodule monoid $B$ over a bimonoid $A$ in a duoidal
category $\mathcal M$, assume that the equalizer (\ref{eq:BcoA}) and the
coequalizers (\ref{eq:BcoA_product}) exist and that any power of
$(-)\circ B:\mathcal M\to \mathcal M$ preserves the coequalizers
(\ref{eq:BcoA_product}). Then the functor $(-)\circ_{B^{\co}} B: {\mathcal
M}_{B^{\co}}\to {\mathcal M}_B$ lifts to ${\mathcal M}_{B^{\co}}^I\to
{\mathcal M}_B^A$.
\end{proposition}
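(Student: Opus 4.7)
The plan is to construct, for each $(P,\rho_P,\gamma_P)\in\mathcal M^I_{B^{\co}}$, an $A$-coaction $\bar\rho$ on $P\circ_{B^{\co}}B$ compatible with the $B$-action of \eqref{eq2.7}, thereby exhibiting the object part of the desired lift; naturality of the construction will then yield the morphism part and functoriality. The candidate for $\bar\rho$ arises from the composite
$$\xi_P\;:=\;(\pi_{P,B}\bullet A)\,.\,\lambda^0_P\,.\,(\rho_P\circ B)\;:\;P\circ B\;\longrightarrow\;(P\circ_{B^{\co}}B)\bullet A,$$
where $\lambda^0_P$ is the natural transformation \eqref{eq:lambda^0} of Proposition~\ref{prop:rel_Hopf_lifting} applied to $P$ considered as an object of $\mathcal M$, and $\pi_{P,B}$ is the coequalizer of \eqref{eq:BcoA_product}. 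Once $\xi_P$ is shown to coequalize the parallel pair of \eqref{eq:BcoA_product}, the universal property produces a unique $\bar\rho$ with $\bar\rho\,.\,\pi_{P,B}=\xi_P$.

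Proving this coequalization is the technical heart. Two facts are essential. First, since $\gamma_P:P\circ B^{\co}\to P$ is a morphism in $\mathcal M^I$, the composite $\rho_P\,.\,\gamma_P$ factors through the $I$-coaction on $P\circ B^{\co}$ induced by the strict monoidality of the forgetful $\mathcal M^I\to\mathcal M$, namely as $(\gamma_P\bullet I)$ followed by a unit isomorphism and $\zeta_{P,I,B^{\co},I}$ applied to $\rho_P\circ\rho_{B^{\co}}$. Second, the equalizer relation of \eqref{eq:BcoA} defining $\iota$ --- equivalently the construction of $\omega$ in Proposition~\ref{prop:omega} --- trivialises the putative $A$-coaction on $B^{\co}$ through $\eta:I\to A$ and $\delta:I\to I\bullet I$. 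A diagram chase combining these two facts with naturality of $\zeta$, the duoidal axioms \eqref{eq1.1}--\eqref{eq1.2}, the comodule monoid axioms \eqref{eq2.1} for $B$, and the defining formula \eqref{eq:lambda^0} for $\lambda^0$, should reduce both paths of the would-be coequalizing diagram to the same morphism into $(P\circ_{B^{\co}}B)\bullet A$.

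Given $\bar\rho$, the remaining verifications --- coassociativity and counitality of $\bar\rho$, and the relative Hopf module axiom \eqref{eq2.2} between $\bar\rho$ and the $B$-action of \eqref{eq2.7} --- are tested after precomposition with the epimorphism $\pi_{P,B}$, or with $\pi_{P,B}\circ B$ (which is epi by the preservation hypothesis on powers of $(-)\circ B$). Each test reduces to an identity between morphisms out of $P\circ B$ or $P\circ B\circ B$, which follows from the comonad morphism and $B$-linearity properties of $\lambda^0$ established in Proposition~\ref{prop:rel_Hopf_lifting}, together with the $I$-coaction and $B^{\co}$-action axioms of $P$. Functoriality of the lift follows from naturality of $\lambda^0$, $\rho_{(-)}$ and $\pi_{-,B}$ in $P$.

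The main obstacle is the coequalization step: it is the unique place where the equalizer \eqref{eq:BcoA} defining $B^{\co}$ enters in an essential way, and it demands a coordinated use of the duoidal, bimonoid, and comodule monoid axioms at once. The remaining verifications are formal transfers of properties of $\lambda^0$ to the quotient along $\pi_{P,B}$.
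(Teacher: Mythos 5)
Your proposal is correct and follows essentially the same route as the paper: your $\bar\rho$ is exactly the paper's $\lambda_P$ composed with $\rho\circ_{B^{\co}}B$, and the crucial coequalization step uses the same ingredients (the formula \eqref{eq:lambda^0} for $\lambda^0$, naturality of $\zeta$, the comodule-monoid axioms \eqref{eq2.1}, and the equalizer property of $\iota:B^{\co}\to B\bullet I$, which trivializes the induced coaction through $\eta$ and $\delta$). The only difference is organizational: the paper builds $\lambda_P$ as a comonad morphism on all of $\mathcal M_{B^{\co}}$ and invokes the lifting/comonad-morphism correspondence, which yields the comodule axioms and the compatibility \eqref{eq2.2} automatically, whereas you define the coaction only on objects of $\mathcal M^I_{B^{\co}}$ and verify those axioms by hand after precomposing with the epimorphisms $\pi_{P,B}$ and $\pi_{P,B}\circ B$ --- a correct, if slightly longer, way to finish.
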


\begin{proof}
In light of \cite[Corollary 5.11]{PowWat} or \cite[Proposition 1.1 and Theorem
1.2]{GomTor} (see also Section \ref{sec:Dubuc-Beck}), we need to construct a
morphism $\lambda_P: (P\bullet I)\circ_{B^{\co}} B \to (P \circ_{B^{\co}}
B)\bullet A$ in ${\mathcal M}_B$, for any right $B^{\co}$-module $P$; and show
that $\lambda$ is in fact a comonad morphism. The morphism $\lambda_P$ is
constructed by using the universality of the coequalizer in $\mathcal{M}$ in
the top row:  
\begin{equation}
\xymatrix{
((P\bullet I)\circ B^{\co})\circ B
\ar@<2pt>[rr]^-{\gamma \circ B}
\ar@<-2pt>[rr]_-{((P\bullet I)\circ\gamma).\alpha}
&&  (P\bullet I)\circ B \ar[d]_-{\lambda_P^0}\ar[rr]^-{\pi_{P\bullet I,B}}
&&  (P\bullet I)\circ_{B^{\co}}B  \ar@{..>}@/1.5pc/[d]^-{\lambda_P}\\
&& (P\circ B)\bullet A\ar[rr]^-{\pi_{P,B}\bullet A}
&& (P\circ_{B^{\co}} B)\bullet A,}
\label{eq2.8}
\end{equation}
where $\lambda^0$ is the comonad morphism (\ref{eq:lambda^0}).
In order to see that $\lambda_P$ is well defined, we need to check that
$(\pi_{P,B}\bullet A).\lambda_P^0$ coequalizes the parallel morphisms in the
top row. 
The $B^{\co}$-action on $P\bullet I$ is given by 
$$
\xymatrix@C=12pt{
(P\bullet I)\circ B^{\co}\ar[rr]^-{(P \bullet I)\circ \rho}&&
(P\bullet I)\circ (B^{\co}\bullet I)\ar[r]^-\zeta&
(P\circ B^{\co})\bullet (I\circ I)\ar[r]^-\cong &
(P\circ B^{\co})\bullet I\ar[r]^-{\gamma\bullet I}&
P\bullet I.}
$$
With this expression at hand, it follows 
by the naturality of $\zeta$ and of the unit constraints, 
since $\iota$ is a morphism of $I$-comodules,
by the coequalizer property of $\pi_{P,B}$, and 
since $\delta:I\to I\bullet I$ is counital; 
that $(\pi_{P,B}\bullet A).\lambda_P^0.(\gamma\circ B)$ is equal to the image
of  
$$
\xymatrix{
(P\bullet I)\circ B^{\co} \ar[r]^-{(P\bullet I)\circ \iota}&
(P\bullet I)\circ (B\bullet I)\ar[r]^-\zeta&
(P\circ B)\bullet (I\circ I)\ar[r]^-\cong&
(P\circ B)\bullet I}
$$
under the functor $(-)\circ B$, composed with 
$$
\xymatrix{
((P\circ B)\bullet I)\circ B\ar[r]^-{\lambda^0_{P\circ B}}&
((P\circ B)\circ B)\bullet A\ar[r]^-{\gamma\bullet A}&
(P\circ B)\bullet A\ar[r]^-{\pi_{P,B}\bullet A}&
(P\circ_{B^{\co}} B)\bullet A.}
$$
On the other hand, $(\pi_{P,B}\bullet A).\lambda_P^0.((P\bullet
I)\circ\gamma).\alpha$ is equal to the same morphism. This follows by the
explicit form of the $B^{\co}$ action on $B$ induced by the morphism $\omega$
in Proposition \ref{prop:omega}, by the naturality of $\zeta$ and of the unit
constraints, by the first compatibility condition in \eqref{eq2.1}, by the
equalizer property of $\iota:B^{\co}\to B\bullet I$, by counitality of the
comonoid $(I,\delta,\tau)$, by one of the associativity axioms
in \eqref{eq1.1}, and by unitality of the monoid $(A,\mu,\eta)$. So by
universality, the morphism $\lambda_P$ exists. 

$\lambda$ is natural since $\lambda^0$ and $\pi$ are so.
It was proven in Proposition \ref{prop:rel_Hopf_lifting} that
$\lambda_P^0$ is a morphism of $B$-modules. Then so is $(\pi_{P,B}\bullet
A).\lambda_P^0$ hence also $\lambda_P$.
The compatibilities of $\lambda$ with the comultiplications and the
counits of the comonads $(-)\bullet I$ on ${\mathcal M}_{B^{\co}}$ and
$(-)\bullet A$ on ${\mathcal M}_B$ follow by naturality of $\pi$ in its first
argument and the compatibilities of $\lambda^0$ in
Proposition \ref{prop:rel_Hopf_lifting}. 
\end{proof}

Whenever the assumptions in Proposition \ref{prop:lambda} hold, it follows by
Proposition \ref{prop:lambda} that for any object $N$ in ${\mathcal
M}_{B^{\co}}^I$, $N\circ_{B^{\co}} B$ is a right $A$-comodule via the coaction
in the bottom row of the following diagram. What is more, $\pi_{N,B}$ is also
a morphism of $A$-comodules by commutativity of the diagram.
$$
\xymatrix@R=15pt{
(N\bullet I)\circ(B\bullet A) \ar[rr]^-{\zeta}
&& (N\circ B)\bullet(I\circ A)\ar[d]^-{\cong}\\
N\circ B\ar[u]^{\rho\circ\rho}\ar[r]^-{\rho\circ B}\ar[d]_-{\pi_{N,B}}
& (N\bullet I)\circ B \ar[d]^-{\pi_{N\bullet I,B}}\ar[r]^-{\lambda^0_N}
& (N\circ B)\bullet A\ar[d]^-{\pi_{N,B}\bullet A}\\
N\circ_{B^{\co}}B\ar[r]_-{\rho\circ_{B^{\co}}B}
& (N\bullet I)\circ_{B^{\co}}B\ar[r]_-{\lambda_N} &
(N\circ_{B^{\co}} B)\bullet A. }
$$
Commutativity of the region at the top follows immediately from the form
of $\lambda^0_N$, see (\ref{eq:lambda^0}).

We have seen that the functor
$(-)^{\co}: \mathcal{M}_B^A\rightarrow \mathcal{M}^I$ --- whenever it
exists --- is right adjoint of the functor $(-)\circ
B: \mathcal{M}^I\rightarrow \mathcal{M}^A_B$. Our next aim is to show
that it factorizes through the right adjoint of $(-)\circ_{B^{\co}}
B:{\mathcal M}_{B^{\co}}^I \to {\mathcal M}_B^A$, via the forgetful functor
${\mathcal M}_{B^{\co}}^I \to {\mathcal M}^I$.

\begin{proposition} \label{prop:adjoints}
For a right comodule monoid $B$ over a bimonoid $A$ in a duoidal
category $\mathcal M$, assume that the equalizers (\ref{eq:McoA}) and the
coequalizers (\ref{eq:BcoA_product}) exist and that any power of
$(-)\circ B:\mathcal M\to \mathcal M$ preserves the coequalizers
(\ref{eq:BcoA_product}). Then there is a lifting of
$(-)^{\co}: \mathcal{M}_B^A\rightarrow \mathcal{M}^I$ to
$\mathcal{M}_B^A\rightarrow \mathcal{M}^I_{B^{\co}}$, which provides the right
adjoint of the functor $(-)\circ_{B^{\co}} B$ in
Proposition \ref{prop:lambda}. 
\end{proposition}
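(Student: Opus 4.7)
The plan is two-fold: first, lift the functor $(-)^{\co}:\mathcal M_B^A\to\mathcal M^I$ through the forgetful functor $U^{B^{\co}}:\mathcal M^I_{B^{\co}}\to\mathcal M^I$ by endowing each coinvariant object $X^{\co}$ with a canonical $B^{\co}$-action; second, verify that the resulting functor $\mathcal M^A_B\to\mathcal M^I_{B^{\co}}$ is right adjoint to $(-)\circ_{B^{\co}}B$ from Proposition \ref{prop:lambda}.

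For the lift, I would use that the right adjoint $(-)\bullet I:\mathcal M\to\mathcal M^I$ of the strict monoidal forgetful functor $U^I$ is lax monoidal; hence it sends the monoid $B$ to the monoid $B\bullet I$ in $\mathcal M^I$ (as was already used in Proposition \ref{prop:BcoA_monoid}) and any right $B$-module $(X,\gamma)$ to a right $(B\bullet I)$-module $X\bullet I$ in $\mathcal M^I$ whose action is
$$\xymatrix@C=15pt{(X\bullet I)\circ(B\bullet I)\ar[r]^-{\zeta}&(X\circ B)\bullet(I\circ I)\ar[r]^-{\cong}&(X\circ B)\bullet I\ar[r]^-{\gamma\bullet I}&X\bullet I.}$$
Restriction along the monoid morphism $\iota:B^{\co}\to B\bullet I$ of Proposition \ref{prop:BcoA_monoid} then equips $X\bullet I$, and likewise $(X\bullet A)\bullet I$, with $B^{\co}$-module structures in $\mathcal M^I$. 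The crux is to show that both parallel morphisms $\varphi^0,\varphi^1$ in (\ref{eq:McoA}) are $B^{\co}$-equivariant; granted this, the forgetful functor $\mathcal M^I_{B^{\co}}\to\mathcal M^I$ creates equalizers, so the $B^{\co}$-action restricts uniquely along $\iota:X^{\co}\to X\bullet I$ to define a $B^{\co}$-action on $X^{\co}$. The equivariance of $\varphi^0=\rho\bullet I$ rests on the relative Hopf module compatibility (\ref{eq2.2}) between $\gamma$ and $\rho$, together with naturality of $\zeta$ and an instance of the associativity axiom (\ref{eq1.1}); that of $\varphi^1$ uses the way $\iota$ factors through the defining equalizer of $B^{\co}$, one of the unitality axioms (\ref{eq1.2}), and counitality of $(I,\delta,\tau)$. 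This diagram chase is the main technical obstacle, but it follows the same pattern as the computations already carried out in Propositions \ref{prop:rel_Hopf_lifting} and \ref{prop:lambda}.

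For the adjunction, given $N\in\mathcal M^I_{B^{\co}}$ and $X\in\mathcal M^A_B$, I would exhibit a natural bijection
$$\mathcal M^A_B\bigl(N\circ_{B^{\co}}B,\,X\bigr)\;\cong\;\mathcal M^I_{B^{\co}}(N,X^{\co})$$
as follows. By the universal property of the coequalizer (\ref{eq:BcoA_product}), and using that the projection $\pi_{N,B}:N\circ B\to N\circ_{B^{\co}}B$ is a morphism in $\mathcal M^A_B$ (as was observed after Proposition \ref{prop:lambda}), precomposition with $\pi_{N,B}$ identifies the left-hand side with the subset of $\mathcal M^A_B(N\circ B,X)$ consisting of those morphisms which coequalize the defining pair. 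The pre-existing adjunction $(-)\circ B\dashv(-)^{\co}$ of Section \ref{sec:coinv} puts this latter hom-set into bijection with $\mathcal M^I(N,X^{\co})$, and a direct diagram chase --- using the explicit construction of the $B^{\co}$-action on $X^{\co}$ from the first step together with the fact that the left $B^{\co}$-action on $B$ is induced by the monoid morphism $\omega$ of Proposition \ref{prop:omega} --- shows that the coequalizer condition on $N\circ B\to X$ translates precisely into $B^{\co}$-equivariance of the transposed morphism $N\to X^{\co}$. Naturality in both $N$ and $X$ is manifest from the construction, so the claim follows.
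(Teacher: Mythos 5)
Your proposal is correct in outline, and its first half coincides with the paper's argument: the paper likewise equips $X\bullet I$ and $(X\bullet A)\bullet I$ with $B^{\co}$-actions that it then identifies with the restriction along $\iota:B^{\co}\to B\bullet I$ of the actions coming from the lax monoidal functor $(-)\bullet I$, proves that $\varphi^0$ and $\varphi^1$ are $B^{\co}$-equivariant (by exhibiting $\varphi^0=\rho\bullet I$ and the two factors of $\varphi^1$ as images under the lifted comonad $(-)\bullet I$ on $\mathcal M_{B^{\co}}$ of $B^{\co}$-module maps), and concludes via creation of equalizers by the forgetful functor $\mathcal M^I_{B^{\co}}\to\mathcal M^I$. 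Where you genuinely diverge is in establishing the adjunction. The paper does not verify a hom-set bijection at all: it applies the Adjoint Lifting Theorem to the adjoint triangle \eqref{eq:bigtriangle}, so that once the equalizer \eqref{eq:eq_MBcoA} is known to exist in $\mathcal M^I_{B^{\co}}$ the right adjoint $N$ of $(-)\circ_{B^{\co}}B$ and its compatibility with $(-)^{\co}$ come for free; the price is the computation identifying the theorem's abstract lower arrow \eqref{eq:phi1} (built from units and counits of the adjunctions in the triangle) with $\varphi^1$. You instead pay at the other end: you avoid that identification but must supply the ``direct diagram chase'' showing that, under the adjunction $(-)\circ B\dashv(-)^{\co}$ of Section \ref{sec:coinv}, the coequalizing condition on a morphism $N\circ B\to X$ in $\mathcal M^A_B$ corresponds exactly to $B^{\co}$-equivariance of its transpose $N\to X^{\co}$. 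That chase is true but not automatic --- the forward direction uses that morphisms in $\mathcal M^A_B$ are in particular $B$-module maps, hence determined by their transposes via the counit, and the identification of the left-hand hom-set with coequalizing morphisms needs $\pi_{N,B}$ to be epimorphic in $\mathcal M^A_B$ and the induced factorizations to again be morphisms of relative Hopf modules (which holds since $(-)\circ B$ preserves the coequalizers \eqref{eq:BcoA_product} by hypothesis). So your route is more elementary and self-contained, at the cost of a longer explicit verification; the paper's route leans on Dubuc's theorem and keeps the verification confined to identifying two parallel pairs.
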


\begin{proof} 
We are to prove that there is an adjoint triangle
\begin{equation}\label{eq:bigtriangle}
\xymatrix@C=15pt{
&&&&&&  \mathcal{M}^A_B \ar@{..>}@/^1pc/[llllllddd]^-{N=(-)^{\co}}
\ar@{->}@/_0.5pc/[ddd]_{U^A}^-\dashv\\
&&&&&& \\   &&&&&& \\
\mathcal{M}^I_{B^{\co}}\ar@{->}@/^0.5pc/[rrr]^(.65){U^I}_-\perp
\ar@{->}@/^1pc/[rrrrrruuu]^{K=(-)
\raisebox{-5pt}{$\stackrel\circ{{}_{B^{\co}}}$} B}_-\perp
&&&  \mathcal{M}_{B^{\co}}
\ar@{->}@/^0.5pc/[rrr]^{\omega_\ast=(-)
\raisebox{-5pt}{$\stackrel\circ{{}_{B^{\co}}}$} B}_-\perp
\ar@{->}@/^0.5pc/[lll]^{F^I=(-)\bullet I}
&&&  \mathcal{M}_B, \ar@{->}@/^0.5pc/[lll]^{\omega^\ast}
\ar@{->}@/_0.5pc/[uuu]_{F^A=(-)\bullet A}}
\end{equation}
in which $U^AK=\omega_\ast U^I$.
By the Adjoint Lifting Theorem (cf. the dual form of \cite[Theorem
2.1]{J. Power}, see Section \ref{sec:Dubuc-Beck}), the functor $N$ exists, and
it is a right adjoint of $K$, provided that the equalizer  
\begin{equation}\label{eq:eq_MBcoA}
\xymatrix{
X^{\co}\ar[r]&
X\bullet I\ar@<2pt>[r]\ar@<-2pt>[r]&
(X\bullet A)\bullet I}
\end{equation}
exists in $\mathcal M^I_{B^{\co}}$, for any $(A,B)$-relative Hopf module $X$.
The upper one of the parallel arrows is $\varphi^0=\rho\bullet I$ as
in \eqref{eq:McoA}. The lower one is 
\begin{equation}\label{eq:phi1}
\xymatrix@C=15pt @R=8pt{
X\bullet I\ar[r]^-{\nu^I}&
(X\bullet I)\bullet I \ar[r]^-{\nu\bullet I}&
((X\bullet I)\raisebox{-5pt}{$\stackrel\circ{{}_{B^{\co}}}$} B )\bullet I
\ar[r]^-{\rho \bullet I}&
(((X\bullet I)\raisebox{-5pt}{$\stackrel\circ{{}_{B^{\co}}}$} B )\bullet A)
\bullet I
\ar[rr]^-{((\epsilon^I\circ_{B^{\co}}B)  \bullet A) \bullet I}&&{\ } \\
&&&
((X\raisebox{-5pt}{$\stackrel\circ{{}_{B^{\co}}}$} B)\bullet A)\bullet I
\ar[rr]^-{(\epsilon\bullet A)\bullet I}&&
(X\bullet A)\bullet I.}
\end{equation}
Here 
$\nu^I$ is the unit, and $\epsilon^I$ is the counit of the adjunction
$U^I\dashv F^I$,
and $\nu$ is the unit and $\epsilon$ is the counit of the adjunction
$\omega_\ast \dashv \omega^\ast$. 
(Recall that for any right $B$-module $Q$, $\epsilon_Q.\pi_{Q,B}$ is
equal to the $B$-action on $Q$.)
The symbol $\rho$ denotes the $A$-coaction
on $(X\bullet I)\circ_{B^{\co}} B$ from Proposition \ref{prop:lambda}.
A computation --- using that $\pi_{X\bullet I,B}$ and the unit $\eta:I\to B$
are morphisms of $A$-comodules, 
naturality of $\pi$ in the first argument,
the relation between the counit $\epsilon_X$ and the $B$-action on $X$,
counitality of the comonoid $I$ and unitality of the $B$-action on $X$,
and one of the unitality axioms in \eqref{eq1.2} --- 
yields that \eqref{eq:phi1} is equal to $\varphi^1=((X\bullet \eta)\bullet
I).\alpha^{-1}.(X\bullet \delta)$ as in \eqref{eq:McoA}. 

The $B^{\co}$-action on $X\bullet I$ is equal to 
$$
\xymatrix@C=10pt @R=8pt{
(X\bullet I)\circ B^{\co}\ar[rr]^-{(X\bullet I)\circ \rho}&&
(X\bullet I)\circ (B^{\co}\bullet I)\ar[r]^-{\zeta}&
(X\circ B^{\co})\bullet (I\circ I)\ar[r]^-\cong&
(X\circ B^{\co})\bullet I\ar[r]^-{(X\circ \omega)\bullet I}&\\
&&&&
(X\circ B)\bullet I \ar[r]^-{\gamma\bullet I}&
X\bullet I.}
$$
Using the explicit form of $\omega$ in Proposition \ref{prop:omega}, the fact
that $\iota:B^{\co}\to B\bullet I$ is a morphism of $I$-comodules and
counitality of the comonoid $I$, this $B^{\co}$-action is shown to be equal to 
$$
\xymatrix{
(X\bullet I)\circ B^{\co}\ar[r]^-{(X\bullet I)\circ \iota}&
(X\bullet I)\circ (B\bullet I)\ar[r]^-\zeta&
(X\circ B)\bullet (I\circ I)\ar[r]^-\cong &
(X\circ B)\bullet I \ar[r]^-{\gamma\bullet I}&
X\bullet I.}
$$
The $B^{\co}$-action on $X\bullet A$ is equal to 
$$
\xymatrix@C=12pt{
(X\bullet A)\circ B^{\co}\ar[rr]^-{(X\bullet A)\circ \omega}&&
(X\bullet A)\circ B\ar[rr]^-{(X\bullet A)\circ \rho}&&
(X\bullet A)\circ (B\bullet A)\ar[r]^-\zeta&
(X\circ B)\bullet (A\circ A)\ar[r]^-{\gamma\bullet \mu}&
X\bullet A.}
$$
Using the equalizer property of $\iota:B^{\co}\to B\bullet I$, counitality of
the comonoid $I$ and unitality of the monoid $A$, this $B^{\co}$-action turns
out to be equal to 
$$
\xymatrix@C=12pt{
(X\bullet A)\circ B^{\co}\ar[rr]^-{(X\bullet A)\circ \iota}&&
(X\bullet A)\circ (B\bullet I)\ar[r]^-\zeta&
(X\circ B)\bullet (A\circ I)\ar[r]^-\cong &
(X\circ B)\bullet A \ar[r]^-{\gamma\bullet A}&
X\bullet A.}
$$
With these $B^{\co}$-actions at hand, $X\bullet \eta:X\bullet I\to X\bullet A$
is a morphism of $B^{\co}$-modules by naturality of $\zeta$ and
functoriality of both monoidal structures. 
So since $\varphi^0=\rho\bullet I$ and $(X\bullet \eta)\bullet I$ are in the
range of the functor $(-)\bullet I:\mathcal M_{B^{\co}}\to \mathcal
M_{B^{\co}}$, they are morphisms of $B^{\co}$-modules. 
Since $\alpha^{-1}.(X\bullet \delta)$ is the comultiplication of the comonad
$(-)\bullet I:\mathcal M_{B^{\co}}\to \mathcal M_{B^{\co}}$ evaluated at $X$,
it is a morphism of $B^{\co}$-modules too. Thus both $\varphi^0$ and
$\varphi^1$ are morphisms of $B^{\co}$-modules. Since the forgetful functor
$\mathcal M^I_{B^{\co}}\to \mathcal M^I$ creates equalizers,
and \eqref{eq:eq_MBcoA} is an equalizer in $\mathcal M^I$ by assumption, we
conclude that it is an equalizer in $M^I_{B^{\co}}$ as needed.
\end{proof}

\subsection{Galois extensions}

For a right comodule monoid $B$ over a bimonoid $A$ in a duoidal
category $\mathcal M$, assume that the equalizers (\ref{eq:McoA}) and the
coequalizers (\ref{eq:BcoA_product}) exist and that any power of
$(-)\circ B:\mathcal M\to \mathcal M$ preserves the coequalizers
(\ref{eq:BcoA_product}). Then by Proposition \ref{prop:adjoints} there is an
adjoint triangle \eqref{eq:bigtriangle}. Hence as in \eqref{eq:beta_long},
there is a corresponding morphism of comonads 
$\beta$:  
$$
\xymatrix@C=45pt@R=5pt{
\omega_\ast U^IF^I\omega^\ast\ar@{=}[d]
&U^AF^A\omega_\ast U^IF^I\omega^\ast\ar@{=}[d]
\ar[r]^-{U^AF^A\omega_\ast \epsilon^I \omega^\ast}
& U^AF^A\omega_\ast \omega^\ast\ar[r]^-{U^AF^A\epsilon}
& U^AF^A,\\
U^AKF^I\omega^\ast\ar[r]^-{U^A\nu^AKF^I\omega^\ast}
&U^AF^AU^AKF^I\omega^\ast}
$$
where $\nu^A$ is the unit of the adjunction $U^A\dashv F^A$, and $\epsilon^I$
and $\epsilon$ are the counits of the adjunctions $U^I\dashv F^I$ and
$\omega_\ast \dashv \omega^\ast$, respectively.
Explicitly, for any right $B$-module $Q$, $\beta_Q$ is the
morphism 
\begin{equation}\label{eq:beta_M}
\xymatrix @C=15pt @R5pt{
(Q\bullet I)
\raisebox{-5pt}{$\stackrel\circ{{}_{B^{\co}}}$} B \ar[r]^-{\rho}
& ((Q\bullet I)
\raisebox{-5pt}{$\stackrel\circ{{}_{B^{\co}}}$} B)\bullet A
\ar[rrr]^{((Q\bullet \tau)\circ_{B^{\co}}B)\bullet A}
&&& ((Q\bullet J)
\raisebox{-5pt}{$\stackrel\circ{{}_{B^{\co}}}$} B)\bullet A \ar[r]^-{\cong}
&\\
&&&& (Q
\raisebox{-5pt}{$\stackrel\circ{{}_{B^{\co}}}$} B)\bullet A
\ar[r]^-{\epsilon_Q\bullet A}
& Q\bullet A. }
\end{equation}

\begin{proposition}
For a right comodule monoid $B$ over a bimonoid $A$ in a duoidal
category $\mathcal M$, assume that the equalizers (\ref{eq:McoA}) and  the
coequalizers (\ref{eq:BcoA_product}) exist and that any power of
$(-)\circ B:\mathcal M\to \mathcal M$ preserves the coequalizers
(\ref{eq:BcoA_product}).
For any right $B$-module $(Q,\gamma)$, consider the natural
transformation 
\begin{equation}
\label{eq:beta0M}
\beta^0_Q:=\big(
\xymatrix@C=10pt{
(Q\bullet I)\circ B \ar[rr]^-{(Q\bullet I)\circ \rho}&&
(Q\bullet I)\circ (B\bullet A)\ar[r]^-\zeta&
(Q\circ B)\bullet (I\circ A)\ar[r]^-{\cong}&
(Q\circ B)\bullet A \ar[r]^-{\gamma\bullet A}&
Q\bullet A } \big).
\end{equation}
Then $\beta_Q$ in (\ref{eq:beta_M}) can be characterized as the unique
morphism for which $\beta^0_Q=\beta_Q.\pi_{Q\bullet I,B}$.
\end{proposition}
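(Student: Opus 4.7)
The plan is to prove the stated equation directly; the uniqueness clause is then automatic, since $\pi_{Q\bullet I,B}$, being (by construction) a coequalizer in $\mathcal M$, is an epimorphism, so any morphism $\tilde\beta$ satisfying $\tilde\beta\circ\pi_{Q\bullet I,B}=\beta^0_Q$ must coincide with $\beta_Q$. What needs a proof is therefore only the identity $\beta_Q\circ\pi_{Q\bullet I,B}=\beta^0_Q$ itself.

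To verify it, I would substitute the explicit expression \eqref{eq:beta_M} for $\beta_Q$ and propagate $\pi_{Q\bullet I,B}$ leftwards through each factor. The first step concerns the leftmost factor, the $A$-coaction $\rho$ on $(Q\bullet I)\circ_{B^{\co}}B$. Regarding $N:=Q\bullet I$ as an object of $\mathcal M^I_{B^{\co}}$ via the cofree $I$-coaction (which up to associator is $Q\bullet\delta$), the commutative diagram displayed immediately after Proposition \ref{prop:lambda}, together with \eqref{eq2.8} and naturality of $\pi$ in its first argument, yields
\[
\rho\circ\pi_{Q\bullet I,B}=(\pi_{Q\bullet I,B}\bullet A)\circ\lambda^0_{Q\bullet I}\circ(\alpha^{-1}\circ(Q\bullet\delta)\circ B).
\]
Using functoriality of $-\bullet A$ together with naturality of $\pi$, one then pushes the next factor $((Q\bullet\tau)\circ_{B^{\co}}B)\bullet A$ of $\beta_Q$ past $\pi_{Q\bullet I,B}\bullet A$ to obtain $(\pi_{Q\bullet J,B}\bullet A)\circ(((Q\bullet\tau)\circ B)\bullet A)$; the subsequent canonical isomorphism $(Q\bullet J)\circ_{B^{\co}}B\cong Q\circ_{B^{\co}}B$ commutes past $\pi$ into $(Q\bullet J)\circ B\cong Q\circ B$. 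Finally the outermost $\epsilon_Q\bullet A$ absorbs the emerging $\pi_{Q,B}\bullet A$ via the identity $\epsilon_Q\circ\pi_{Q,B}=\gamma$ recalled in the text.

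After these manipulations, $\beta_Q\circ\pi_{Q\bullet I,B}$ is expressed as a composite in $\mathcal M$ involving only $\lambda^0_{Q\bullet I}$, the cofree coaction $Q\bullet\delta$, $Q\bullet\tau$, unit/associator constraints, and $\gamma\bullet A$. Unfolding $\lambda^0_{Q\bullet I}$ from \eqref{eq:lambda^0} and invoking counitality of the comonoid $(I,\delta,\tau)$ (which identifies $(I\bullet\tau)\circ\delta$ with the unit constraint $I\bullet J\cong I$), together with naturality of $\zeta$ and of the unit constraints, causes the extra copy of $I$ introduced by $Q\bullet\delta$ to collapse; what survives is precisely $(\gamma\bullet A)\circ\lambda^0_Q$, which is $\beta^0_Q$ as given by \eqref{eq:beta0M}. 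The only real obstacle is the combinatorial bookkeeping of associators and unit constraints throughout these reductions; no structural input beyond the universal properties of $\pi$ and $\epsilon$ and counitality of $\delta$ is required.
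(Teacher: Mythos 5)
Your proposal is correct and follows essentially the same route as the paper: push $\pi_{Q\bullet I,B}$ through each factor of \eqref{eq:beta_M} using naturality of $\pi$, the comodule-morphism property of $\pi$ from the diagram after Proposition \ref{prop:lambda}, and $\epsilon_Q.\pi_{Q,B}=\gamma$, then identify the resulting composite with $\beta^0_Q$ via the explicit form of the induced $A$-coaction, counitality of $(I,\delta,\tau)$, and naturality of $\zeta$ and the unit constraints. The uniqueness observation (via $\pi$ being epi) matches the paper's implicit use of the coequalizer's universal property.
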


\begin{proof}
By naturality of $\pi$ and by $\epsilon_Q.\pi_{Q,B}$ being equal to the
$B$-action on $Q$, $\beta_Q$ is the unique morphism for which the diagram 
$$
\xymatrix @C=15pt{
(Q\bullet I)\raisebox{-5pt}{$\stackrel\circ{{}_{B^{\co}}}$}B
\ar[rrrrr]^-{\beta_Q}
&&&&& Q\bullet A \\
(Q\bullet I)\circ B \ar[r]_-{\rho}
\ar[u]^-{\pi_{Q\bullet I,B}}
& ((Q\bullet I)\circ B)\bullet A \ar[rr]_-{((Q\bullet\tau)\circ B)\bullet A}
&&((Q\bullet J)\circ B)\bullet A \ar[r]_-\cong
& (Q\circ B)\bullet A \ar[r]_-{\gamma\bullet A}
&Q\bullet A \ar@{=}@/^0pc/[u]}
$$
commutes. (The $A$-coaction $\rho$ on $(Q\bullet I)\circ B$ appearing on the
left in the bottom row, is induced from the $I$-coaction on $Q\bullet I$ by
the comonad morphism $\lambda^0$ in (\ref{eq:lambda^0}).)
The morphism in the bottom row is equal to (\ref{eq:beta0M})
by the explicit form of the $A$-coaction on $(Q\bullet I)\circ B$, by the
counitality of the comonoid $(I,\delta,\tau)$ and by naturality of $\zeta$
and of the unit constrains.
\end{proof}

\begin{definition} Consider a duoidal category $\mathcal M$. A monoid
morphism $\omega:C\rightarrow B$ in $(\mathcal M,\circ,I)$ is called a {\it
Galois extension} by a bimonoid $A$ in $\mathcal M$ if the following
conditions hold. 
\begin{itemize}
\item $B$ is a right comodule monoid over $A$.
\item The $A$-coinvariant part of any $(A,B)$-relative Hopf module (i.e. the
  equalizer (\ref{eq:McoA})) exists.
\item $C$ fits the equalizer diagram
$$
\xymatrix{
C\ar[r]^-\iota&
B\bullet I \ar@<2pt>[r]^-{\varphi_0}\ar@<-2pt>[r]_-{\varphi_1}&
(B\bullet A)\bullet I}
$$
cf. \eqref{eq:BcoA} (so that $C$ is the coinvariant part of $B$), and
$\omega:C\to B$ is the corresponding morphism of monoids in
Proposition \ref{prop:omega}. 
\item The coequalizers (\ref{eq:BcoA_product}) exist and any power of
$(-)\circ B:\mathcal M\to \mathcal M$ preserves them.
\item The natural transformation $\beta$ in (\ref{eq:beta_M}) is an
isomorphism.
\end{itemize}
\end{definition}

If $\mathcal M$ is a braided monoidal category, this reduces to the
usual definition of a Galois extension by a bimonoid (see
e.g. \cite[Definition 3.1]{Scha}). 

\section{The fundamental theorem of Hopf modules} \label{sec:fthm}

In this section we analyze further the particular extension $I\to A$ by
a bimonoid $A$ in a duoidal category $\mathcal M$. Making some assumptions on
$\mathcal M$, we relate its Galois property to the Fundamental Theorem of Hopf
Modules holding true --- that is, to an equivalence between the category of
$A$-Hopf modules and the category ${\mathcal M}^I$ of comodules over the
$\circ$-monoidal unit $I$. (Clearly, if $\mathcal M$ is a braided monoidal
category, ${\mathcal M}^I$ is isomorphic to $\mathcal M$.)

\subsection{Properties of the Galois morphism}

Let $A$ be a bimonoid in a duoidal category $\mathcal M$ and regard $A$
as a right $A$-comodule monoid (via the coaction provided by the
comultiplication). The resulting category of $A$-modules in the category
${\mathcal M}^A$ of $A$-comodules is denoted by ${\mathcal M}^A_A$ and it is
called {\em the category of $A$-Hopf modules}.

\begin{lemma} \label{lem:AcoA}
Let $A$ be a bimonoid in a duoidal category $\mathcal M$. Then the coinvariant
part of $A$ as a Hopf module exists and it is isomorphic to the
$\circ$-monoidal unit $I$. In fact, the following is a contractible equalizer
in $\mathcal M^I$ 
\begin {equation}
\xymatrix{
I\ar[r]^-{\delta}
&  I\bullet I\ar[r]^-{\eta\bullet I}
&  A\bullet I \ar@<2pt>[rrr]^-{\varphi^0=\Delta\bullet I}
\ar@<-2pt>[rrr]_-{\varphi^1=((A\bullet \eta)\bullet I).\alpha^{-1}.
(A\bullet \delta)}
&&&  (A\bullet A)\bullet I.}
\label{eq3.2}
\end{equation}
\end{lemma}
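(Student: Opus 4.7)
The strategy is to exhibit the fork as a contractible (absolute) split equalizer in $\mathcal M^I$, with contracting maps built from the counit $\varepsilon\colon A\to J$ and the left $\bullet$-unitor $\lambda\colon J\bullet M\xrightarrow{\cong} M$ (available because $J$ is the $\bullet$-unit). This is the direct duoidal analogue of the classical fact that, for a bialgebra, the fork defining $A^{\co A}\cong k$ is split by $\varepsilon$ and $\varepsilon\otimes A$.

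With $e:=(\eta\bullet I)\delta$, I would put
\[
t:=\big(\,A\bullet I\xrightarrow{\varepsilon\bullet I} J\bullet I\xrightarrow{\lambda} I\,\big),\qquad
s:=\big(\,(A\bullet A)\bullet I\xrightarrow{(\varepsilon\bullet A)\bullet I}(J\bullet A)\bullet I\xrightarrow{\lambda\bullet I} A\bullet I\,\big),
\]
and verify the four split-equalizer identities. The equalizer condition $\varphi^0 e=\varphi^1 e$ follows by expanding $\varphi^0 e$ using the bimonoid axiom $\Delta\eta=(\eta\bullet\eta)\delta$, and $\varphi^1 e$ using naturality of $\alpha^{-1}$ together with coassociativity of the $\bullet$-comonoid $(I,\delta,\tau)$; both sides reduce to $((\eta\bullet\eta)\bullet I)(\delta\bullet I)\delta$. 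The identity $te=\id_I$ is then immediate from $\varepsilon\eta=\tau$ and left counitality of $(I,\delta,\tau)$, and $s\varphi^0=\id_{A\bullet I}$ reduces to left counitality of the $\bullet$-comonoid $A$, applied in the first argument of $(-)\bullet I$.

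The delicate identity is $et=s\varphi^1$. Naturality of $\lambda$ at $\eta\colon I\to A$ gives $\lambda\cdot(\varepsilon\bullet\eta)=\eta\cdot t$, so $s\varphi^1=(\eta\bullet I)\cdot(t\bullet I)\cdot\alpha^{-1}\cdot(A\bullet\delta)$. A separate short computation using naturality of $\lambda$ at $\delta\colon I\to I\bullet I$ together with the coherence identity $(\lambda\bullet I)\cdot\alpha^{-1}_{J,I,I}=\lambda_{I\bullet I}$ shows $t$ to be an $I$-comodule morphism, i.e.\ $(t\bullet I)\cdot\alpha^{-1}\cdot(A\bullet\delta)=\delta\cdot t$; combined, these give $s\varphi^1=(\eta\bullet I)\delta\,t=et$.

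It remains to see that the fork and its contracting maps all lie in $\mathcal M^I$. Both $\varphi^0=F^I(\Delta)$ and $s=F^I(\lambda\cdot(\varepsilon\bullet A))$ are in the image of the cofree functor $F^I=(-)\bullet I$; $\varphi^1$ factors as $F^I(A\bullet\eta)$ composed with the coaction $\alpha^{-1}(A\bullet\delta)$ on the cofree comodule $A\bullet I$ (itself a comodule morphism, being the unit of $U^I\dashv F^I$); $e$ corresponds under the cofree adjunction to $\eta\colon I\to A$; and $t$ was just shown to be a comodule morphism. Since a contractible equalizer is absolute, this yields a contractible equalizer in $\mathcal M^I$, identifying $A^{\co}$ with $I$ via $(\eta\bullet I)\delta$. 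The main obstacle I expect is the verification of $et=s\varphi^1$, which requires careful bookkeeping of the interaction between $\alpha^{-1}$, $\lambda$ and $\varepsilon$.
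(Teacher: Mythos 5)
Your proposal is correct and follows essentially the same route as the paper: the paper exhibits exactly the same contracting morphisms $(\varepsilon\bullet I)$ followed by the unitor and $(\varepsilon\bullet A)\bullet I$ followed by the unitor, and invokes the same ingredients (coassociativity of $\delta$, the third and fourth bimonoid axioms, counitality of $I$ and $A$, and coherence). Your additional verification that the splittings are themselves $I$-comodule morphisms is a correct elaboration of a point the paper leaves implicit.
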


\begin{proof}
The diagram in (\ref{eq3.2}) is a fork by the the coassociativity
of $\delta$ and the third axiom in Definition \ref{Def:bimonoid}.
The contracting morphisms are 
$$
\xymatrix@C=15pt{
(A\bullet A)\bullet I\ar[rr]^-{(\varepsilon\bullet A)\bullet I}
&& (J\bullet A)\bullet I\ar[r]^-\cong
& A\bullet I}
\quad \textrm{and}\quad
\xymatrix@C=15pt{
A\bullet I\ar[r]^-{\varepsilon\bullet I}
& J\bullet I\ar[r]^-\cong
& I.}
$$
This is seen applying the fourth bimonoid axiom in
Definition \ref{Def:bimonoid}, counitality of the comonoids $I$ and $A$, 
functoriality of both monoidal products and coherence.
\end{proof}

The most important consequence of Lemma \ref{lem:AcoA} is that the
relative product $\circ_{A^{\co}}$ reduces to the monoidal product
$\circ_I\equiv \circ$ in $\mathcal M$. In particular, the
corresponding coequalizer (\ref{eq:BcoA_product}) is trivial. Hence it exists
and it is preserved by any functor. 

Lemma \ref{lem:AcoA} also implies that in the case of the $A$-comodule monoid
$A$, the difference between $\beta_Q$ and $\beta_Q^0$ in Proposition 2.8
disappears, they become equal. Moreover, substituting $M'=I$ in (\ref{eq1.6}),
and $B=A$ in (\ref{eq:beta0M}), the resulting morphisms $\beta_{Q,I}$ and
$\beta_Q$ differ by an isomorphism (a unit constraint in $\mathcal M$). This
means that
\begin{equation}\label{eq:beta^A}
\beta_Q=\big(
\xymatrix@C=12pt{
(Q\bullet I)\circ A \ar[rr]^-{(Q\bullet I)\circ \Delta}&&
(Q\bullet I)\circ (A\bullet A)\ar[r]^-\zeta&
(Q\circ A)\bullet (I\circ A)\ar[r]^-{\cong}&
(Q\circ A)\bullet A \ar[r]^-{\gamma\bullet A}&
Q\bullet A
} \big)
\end{equation}
is an isomorphism for any right $A$-module $(Q,\gamma)$; that is, $\beta$
is a natural isomorphism, whenever $(-)\circ A$ is a right Hopf monad on
$\mathcal M$. (However, the converse implication needs not be true.) In the
rest of this section we study its properties. 

\begin{lemma} \label{Lemma1}
Let $A$ be a bimonoid in a duoidal category $\mathcal M$. Then the
natural transformation $\beta$ in (\ref{eq:beta^A}) obeys the following
compatibility with the counit, for any right $A$-module $(Q,\gamma)$.
$$
\xymatrix@C=30pt{
(Q\bullet I)\circ A \ar[r]^-{(Q\bullet\tau)\circ A}
\ar[d]_-{\beta_{Q}}
&(Q\bullet J)\circ A\ar[r]^-{\cong}
&Q\circ A\ar[d]^-{\gamma}\\
Q\bullet A\ar[r]_-{Q\bullet \varepsilon}
& Q\bullet J\ar[r]_-{\cong}
& Q }
$$
\end{lemma}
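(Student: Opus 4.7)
The plan is to compute $(Q\bullet\varepsilon)\circ\beta_Q$ by propagating $Q\bullet\varepsilon$ back through each factor in the definition \eqref{eq:beta^A} of $\beta_Q$, using naturality and counitality of $\Delta$, and then to invoke the duoidal unitality identity \eqref{eq:M68}.

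First I slide $Q\bullet\varepsilon$ past $\gamma\bullet A$ by bifunctoriality of $\bullet$, so that it becomes $(\gamma\bullet J)\circ((Q\circ A)\bullet\varepsilon)$. Naturality of the left $\circ$-unit constraint $I\circ(-)\cong(-)$ then factors $(Q\circ A)\bullet\varepsilon$ (composed with $(Q\circ A)\bullet(I\circ A\cong A)$) through $(Q\circ A)\bullet(I\circ\varepsilon)$ followed by $(Q\circ A)\bullet(I\circ J\cong J)$, and naturality of $\zeta$ in its fourth argument moves $I\circ\varepsilon$ across $\zeta$ to become $A\bullet\varepsilon$ applied to the $(A\bullet A)$-factor. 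By counitality of the comonoid $(A,\Delta,\varepsilon)$ in $(\mathcal M,\bullet,J)$, the composite $(A\bullet\varepsilon)\circ\Delta$ is the inverse right $\bullet$-unit constraint $A\cong A\bullet J$. After these three rewrites, $(Q\bullet\varepsilon)\circ\beta_Q$ has been reshaped as $(\gamma\bullet J)$ postcomposed with the upper-right path of the diagram \eqref{eq:M68} (with $A\mapsto Q$ and $B\mapsto A$), precomposed with $(Q\bullet I)\circ(A\cong A\bullet J)$.

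Axiom \eqref{eq:M68} identifies that upper-right path with the left-bottom path, replacing it by $((Q\bullet J)\circ A\cong Q\circ A)\circ((Q\bullet\tau)\circ A)$ together with the isomorphism $(Q\circ A)\bullet J\cong Q\circ A$. Post-composing with the unit constraint $Q\bullet J\cong Q$ and using its naturality to commute it past $\gamma\bullet J$ as $\gamma$ precomposed with $(Q\circ A)\bullet J\cong Q\circ A$, the two isomorphisms on $(Q\circ A)\bullet J$ cancel, and we obtain exactly $\gamma\circ((Q\bullet J)\circ A\cong Q\circ A)\circ((Q\bullet\tau)\circ A)$, which is the right-hand composite in the diagram of the statement.

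The only non-routine ingredient is recognising the correct instance of \eqref{eq:M68}; all remaining steps are standard naturality and coherence, so the main (but minor) obstacle is just the bookkeeping of the several unit constraints that appear along the way.
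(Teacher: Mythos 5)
Your proposal is correct, and it is essentially the paper's own argument written out in full: the paper's proof is a one-line citation of the counitality of the comonoid $A$ and of diagram \eqref{eq:M68}, which are exactly the two non-trivial ingredients you use, the rest being the naturality and coherence bookkeeping you carry out explicitly.
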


\begin{proof}
The claim is verified using the counitality of the comonoid $A$
and \eqref{eq:M68}. 
\end{proof}

\begin{lemma} \label{Lemma2}
Let $A$ be a bimonoid in a duoidal category $\mathcal M$. Then the
natural transformation $\beta$ in (\ref{eq:beta^A}) obeys the following
compatibility with the unit, for any right $A$-module $(Q,\gamma)$.
$$
\xymatrix@C=30pt{
Q\bullet I \ar[r]^-{\cong} \ar[rrd]_-{Q\bullet\eta}
&(Q\bullet I)\circ I
\ar[r]^-{(Q\bullet I)\circ\eta}
&(Q\bullet I)\circ A \ar[d]^-{\beta_{Q}}\\
&& Q\bullet A}
$$
\end{lemma}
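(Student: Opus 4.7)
The plan is to unfold $\beta_Q$ via the explicit formula \eqref{eq:beta^A} and then rewrite the resulting composite on the right leg of the triangle step by step until it becomes $Q\bullet\eta$. First, I would combine the consecutive factors $(Q\bullet I)\circ\eta$ and $(Q\bullet I)\circ\Delta$ into $(Q\bullet I)\circ(\Delta\cdot\eta)$, and apply the third bimonoid axiom of Definition \ref{Def:bimonoid}, namely $\Delta\cdot\eta = (\eta\bullet\eta)\cdot\delta$, so as to rewrite this piece as $(Q\bullet I)\circ(\eta\bullet\eta)$ preceded by $(Q\bullet I)\circ\delta$. Next, by naturality of the interchange law $\zeta$, I would commute $(Q\bullet I)\circ(\eta\bullet\eta)$ past $\zeta$, producing $\bigl((Q\circ\eta)\bullet(I\circ\eta)\bigr)\cdot\zeta$, where the new $\zeta$ has source $(Q\bullet I)\circ(I\bullet I)$ and target $(Q\circ I)\bullet(I\circ I)$.

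At this stage the initial segment $Q\bullet I \xrightarrow{\cong}(Q\bullet I)\circ I \xrightarrow{(Q\bullet I)\circ\delta} (Q\bullet I)\circ(I\bullet I) \xrightarrow{\zeta}(Q\circ I)\bullet(I\circ I) \xrightarrow{\cong} Q\bullet I$ is precisely the composite appearing in the second diagram of the top row of the unitality axioms \eqref{eq1.2} (preceded by the inverse right unit constraint on $Q\bullet I$), and therefore equals the identity on $Q\bullet I$. Inserting this cancellation reduces the claim to showing that the tail composite $Q\bullet I \xrightarrow{\cong}(Q\circ I)\bullet(I\circ I) \xrightarrow{(Q\circ\eta)\bullet(I\circ\eta)}(Q\circ A)\bullet(I\circ A) \xrightarrow{\cong}(Q\circ A)\bullet A \xrightarrow{\gamma\bullet A} Q\bullet A$ coincides with $Q\bullet\eta$.

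For the final step I would use bifunctoriality of $\bullet$ to rewrite the tail as the single morphism $\bigl(\gamma\cdot(Q\circ\eta)\cdot u_R^{-1}\bigr)\bullet\bigl(u_L\cdot(I\circ\eta)\cdot u_L^{-1}\bigr)$, where $u_R$ and $u_L$ denote the right and left unit constraints of $(\mathcal M,\circ,I)$. Unitality of the $A$-action $\gamma$ gives $\gamma\cdot(Q\circ\eta)=u_R$, so the first slot collapses to $\id_Q$; naturality of $u_L$ gives $u_L\cdot(I\circ\eta)=\eta\cdot u_L$, so the second slot collapses to $\eta$. This yields $\id_Q\bullet\eta = Q\bullet\eta$, as required. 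The only real effort is this coherence bookkeeping with unit isomorphisms in the last step, but no structural obstacle arises.
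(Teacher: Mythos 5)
Your proof is correct and follows essentially the same route as the paper, which simply cites the third bimonoid axiom ($\Delta\eta=(\eta\bullet\eta)\delta$), unitality of the $A$-action on $Q$, and one of the unitality axioms in \eqref{eq1.2} — exactly the three ingredients your argument uses, together with the routine naturality of $\zeta$ and of the unit constraints that you spell out.
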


\begin{proof}
This claim follows by the third axiom of a bimonoid in
Definition \ref{Def:bimonoid}, unitality of the $A$-action 
on $Q$ and one of the unitality axioms in \eqref{eq1.2}.
\end{proof}

\begin{lemma} \label{Lemma3}
Let $A$ be a bimonoid in a duoidal category $\mathcal M$. Then the
natural transformation $\beta$ in (\ref{eq:beta^A}) obeys the following
compatibility with the unit and the comultiplication, for any right $A$-module
$(Q,\gamma)$.
$$
\xymatrix{
(Q\bullet I)\circ A
\ar[r]^-{(Q\bullet\delta)\circ A}\ar[d]_-{\beta_{Q}}
&(Q\bullet(I\bullet I))\circ A\ar[r]^-{\cong}
&((Q\bullet I)\bullet I)\circ A\ar[rr]^-{((Q\bullet \eta)\bullet I)\circ A}
&&((Q\bullet A)\bullet I)\circ A\ar[d]^-{\beta_{Q\bullet A}} \\
Q\bullet A\ar[rr]_-{Q\bullet \Delta}
&&Q\bullet(A\bullet A)\ar[rr]_-{\cong}
&& (Q\bullet A)\bullet A }
$$
\end{lemma}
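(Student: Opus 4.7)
The plan is to verify the equality by a diagram chase that reduces to the coassociativity of $\Delta$, the naturality of the interchange law $\zeta$ and of the associators/unitors, one of the associativity pentagons \eqref{eq1.1} of the duoidal structure, and the third compatibility axiom of Definition \ref{Def:bimonoid} (namely $\Delta\eta=(\eta\bullet\eta)\delta$), together with unitality of the monoid $A$.

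First I would unfold $\beta_{Q\bullet A}$ along \eqref{eq:beta^A}, recalling from Proposition \ref{prop:bimonad} that the right $A$-action on $Q\bullet A$ which is required to make sense of $\beta_{Q\bullet A}$ is the one induced by the bimonad $(-)\circ A$, namely
$$
\xymatrix@C=14pt{
(Q\bullet A)\circ A\ar[r]^-{(Q\bullet A)\circ\Delta}&
(Q\bullet A)\circ(A\bullet A)\ar[r]^-{\zeta}&
(Q\circ A)\bullet(A\circ A)\ar[r]^-{\gamma\bullet\mu}&
Q\bullet A.
}
$$
By functoriality of $\circ$, the composite $((Q\bullet A)\bullet I)\circ\Delta$ preceded by $((Q\bullet\eta)\bullet I)\circ A$ can be reshuffled as $((Q\bullet I)\bullet I)\circ\Delta$ followed by $((Q\bullet\eta)\bullet I)\circ(A\bullet A)$, and then naturality of $\zeta$ carries this latter morphism through the interchange law, producing $((Q\bullet\eta)\circ A)\bullet(I\circ A)$ on the far side.

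After these routine moves the right-hand side of the target diagram contains two consecutive applications of $\zeta$, attached to tuples of the shape $(Q\bullet I,I,A,A)$ and (implicitly, from the expansion of the action on $Q\bullet A$) $(Q\bullet A,A,A,A)$; the left-hand side, by contrast, contains only a single $\zeta$ post-composed with $Q\bullet\Delta$ and an associator. The heart of the proof is reconciling these two $\zeta$'s with the single $\zeta$: this is achieved by one of the associativity pentagons in \eqref{eq1.1}, evaluated on the six-tuple $(Q,I,I,A,A,A)$, combined with the coassociativity $(\Delta\bullet A)\Delta=(A\bullet\Delta)\Delta$ applied to the last copy of $A$. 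In effect this step expresses the opmonoidal coassociativity of the bimonad $(-)\circ A$ from Proposition \ref{prop:bimonad}.

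It remains to absorb the unit-related data. One invokes axiom (iii) of Definition \ref{Def:bimonoid}, $\Delta\eta=(\eta\bullet\eta)\delta$, to replace a composite involving $\delta$ and an outer $\eta$ by $\Delta\eta$; then unitality of the monoid $A$ (which forces $\mu$ composed with $\eta\circ A$ to be the unit constraint), unitality of the $A$-action $\gamma$ on $Q$, and the unitality axioms \eqref{eq1.2} of the duoidal structure collapse the remaining auxiliary copies of $I$ and $J$ into the associator appearing on the left. The principal obstacle is the pentagon step: one must identify which of the two pentagons in \eqref{eq1.1} is applicable and line it up correctly with the coassociativity of $\Delta$, because the two sides associate the three copies of $A$ and the three copies of $I$ in reciprocal ways. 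Once that pentagon is correctly chosen, the remaining manipulations are routine applications of naturality and coherence.
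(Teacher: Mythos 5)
Your proposal is correct and follows essentially the same route as the paper's (very terse) proof: coassociativity of $\Delta$, the second associativity axiom of \eqref{eq1.1} evaluated at $(Q,I,I,A,A,A)$, one of the unitality axioms of \eqref{eq1.2}, and naturality throughout, with the $A$-action on $Q\bullet A$ correctly taken to be the one induced by the lifted comonad, i.e.\ $(\gamma\bullet\mu).\zeta.((Q\bullet A)\circ\Delta)$. One small correction: bimonoid axiom (iii) of Definition \ref{Def:bimonoid}, $\Delta\eta=(\eta\bullet\eta)\delta$, is not needed and does not actually fit this diagram (only one $\eta$ occurs and $\Delta$ is never applied to the image of $\eta$ here, unlike in Lemma \ref{Lemma2}); instead the $\delta$ is absorbed by the unitality axiom of \eqref{eq1.2} in the form $\zeta_{I,I,A,A}.(\delta\circ \id)\cong \id$, while the $\eta$ is pulled out by naturality of $\zeta$ and cancelled against $\mu$ by unitality of the monoid $A$ --- ingredients you also list, so the argument goes through without axiom (iii).
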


\begin{proof}
Coassociativity of the comonoid $A$, one of the associativity axioms
in \eqref{eq1.1} and one of the unitality axioms in (\ref{eq1.2}) imply
the claim. 
\end{proof}

\begin{lemma} \label{Lemma4}
Let $A$ be a bimonoid in a duoidal category $\mathcal M$. Then the
natural transformation $\beta$ in (\ref{eq:beta^A}) obeys the following
compatibility with the unit and the comultiplication, for any object $M$ in
$\mathcal M$.
$$
\xymatrix{
(M\bullet I)\circ A \ar[d]_-{(M\bullet I)\circ\Delta}\ar[r]^-{\cong}
&((M\circ I)\bullet I)\circ A \ar[rr]^-{((M\circ \eta)\bullet I)\circ A}
&&((M\circ A)\bullet I)\circ A \ar[d]^-{\beta_{M\circ A}} \\
(M\bullet I)\circ (A\bullet A)\ar[r]_-{\zeta }
& (M\circ A)\bullet (I\circ A)\ar[rr]_-{\cong}
&& (M\circ A)\bullet A }
$$
\end{lemma}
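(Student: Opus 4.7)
My plan is to expand $\beta_{M\circ A}$ via its defining formula~\eqref{eq:beta^A} applied to the free right $A$-module $M\circ A$ (whose action is $(M\circ\mu).\alpha_{M,A,A}$), and then reduce the resulting top composite of the diagram to the bottom composite by a chain of routine manipulations in the duoidal structure.

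First, I would use bifunctoriality of $\circ$ to collapse the two consecutive arrows $((M\circ A)\bullet I)\circ\Delta$ and $((M\circ\eta)\bullet I)\circ A$ into $((M\circ\eta)\bullet I)\circ\Delta$, and then refactor this as $(((M\circ\eta)\bullet I)\circ(A\bullet A)).(((M\circ I)\bullet I)\circ\Delta)$. Naturality of $\zeta$ in its first argument, applied to $M\circ\eta:M\circ I\to M\circ A$, then commutes the leftover $(M\circ\eta)\bullet I$ past $\zeta$, replacing $\zeta_{M\circ A,I,A,A}$ by $\zeta_{M\circ I,I,A,A}$ and leaving behind a trailing factor $((M\circ\eta)\circ A)\bullet(I\circ A)$.

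The central step is the simplification of the tail $((M\circ\mu).\alpha_{M,A,A}).((M\circ\eta)\circ A)$. Naturality of the associator in its second argument, applied to $\eta:I\to A$, turns it into $(M\circ\mu).(M\circ(\eta\circ A)).\alpha_{M,I,A}$; bifunctoriality of $\circ$ rewrites this as $(M\circ(\mu.(\eta\circ A))).\alpha_{M,I,A}$; unitality of the monoid $A$ identifies $\mu.(\eta\circ A)$ with the unit constraint $\ell_A:I\circ A\to A$; and the triangle identity in $(\mathcal M,\circ,I)$ collapses $(M\circ\ell_A).\alpha_{M,I,A}$ to $r_M\circ A$, where $r_M:M\circ I\to M$ is the right unit constraint.

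Finally, naturality of the unit constraint $(-)\bullet(I\circ A)\cong(-)\bullet A$ (with respect to $r_M\circ A$) and a second application of naturality of $\zeta$ in its first argument (now with respect to $r_M$) transport the factor $(r_M\circ A)\bullet A$ to the far left of the composite, where a further use of bifunctoriality of $\circ$ and the identity $r_M.r_M^{-1}=\id$ absorbs it into the leftmost iso $(r_M^{-1}\bullet I)\circ A$ of the top row. What survives is precisely the bottom composite $\cong.\zeta_{M,I,A,A}.((M\bullet I)\circ\Delta)$. No individual step is subtle: every cell of the resulting pasting diagram is a bifunctoriality square of $\circ$ or $\bullet$, a naturality square of $\zeta$ or of a unit constraint, the triangle identity in $(\mathcal M,\circ,I)$, or the unit axiom of the monoid $A$. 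The main obstacle is really just keeping track of where each associator and unit constraint is inserted.
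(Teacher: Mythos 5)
Your proof is correct and follows essentially the same route as the paper's, which simply cites unitality of the monoid $A$ together with coherence and naturality of $\zeta$ and of the associativity and unit constraints — exactly the ingredients you deploy. Your identification of the key reduction $\gamma_{M\circ A}.((M\circ\eta)\circ A)=(M\circ\ell_A).\alpha_{M,I,A}=r_M\circ A$ via the monoid unit axiom and the triangle identity is the heart of the argument, and the remaining naturality and bifunctoriality squares are assembled correctly.
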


\begin{proof}
The claim is obtained from the unitality of the monoid $A$ and from coherence
and naturality of $\zeta$ and of the associativity and unit constraints. 
\end{proof}

\subsection{The existence of coinvariants}\label{sec:Hopf_coinv}

The aim of this section is to prove --- under certain assumptions
on a duoidal category $\mathcal M$ --- that the coinvariant part of any Hopf
module over a bimonoid $A$ in $\mathcal M$ exists whenever (\ref{eq:beta^A})
is a natural isomorphism.

\begin{proposition} \label{prop:split}
Let $A$ be bimonoid in a duoidal category $\mathcal M$
such that (\ref{eq:beta^A}) is a natural isomorphism. Then for
any $A$-Hopf module $(X,\gamma:X\circ A \to X, \rho:X\to X\bullet A)$, the
parallel pair of morphisms
$$
\xymatrix@C=50pt{
X\bullet I \ar@<2pt>[rr]^-{\varphi^0=\rho\bullet I}
\ar@<-2pt>[rr]_-{\varphi^1=
((X\bullet \eta)\bullet I).\alpha^{-1}.(X\bullet \delta)}
&&  (X\bullet A)\bullet I}
$$
is contractible by the functor 
$
H:=\big(
\xymatrix@C=10pt{
\mathcal{M}^I \ar[r]^-{U^I}
& \mathcal{M}\ar[r]^-{(-)\circ J}
& \mathcal{M}_J}\big).
$
\end{proposition}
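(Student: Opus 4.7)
The plan is to exhibit, after applying $H$, a split (contractible) equalizer structure for the pair $(H(\varphi^0), H(\varphi^1))$ in $\mathcal M_J$, consisting of an object $E$, an equalizing morphism $e : E \to H(X \bullet I)$, a section $s : H(X \bullet I) \to E$ with $se = 1_E$, and a retraction $t : H((X \bullet A) \bullet I) \to H(X \bullet I)$ satisfying $t \circ H(\varphi^1) = 1$ and $t \circ H(\varphi^0) = e s$. The retraction $t$ will be built using the assumed inverse $\beta_X^{-1}$; the remaining data will be produced by splitting an idempotent.

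First, define a morphism $\tilde t : (X \bullet A) \bullet I \to (X \bullet I) \circ J$ in $\mathcal M$ by the composite
\[
\tilde t \;:=\; \bigl[(X \bullet I) \circ \varepsilon\bigr] \,\circ\, \beta_X^{-1} \,\circ\, {\cong}_{X \bullet A, J} \,\circ\, \bigl[(X \bullet A) \bullet \tau\bigr],
\]
where ${\cong}_{X \bullet A, J} : (X \bullet A) \bullet J \xrightarrow{\sim} X \bullet A$ is the right $\bullet$-unit constraint. By the free-forgetful adjunction $(-) \circ J \dashv U : \mathcal M_J \to \mathcal M$, this extends uniquely to a $J$-module morphism $t$ in $\mathcal M_J$.

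Next, verify $t \circ H(\varphi^1) = 1_{H(X \bullet I)}$; by adjunction, this reduces to checking $\tilde t \circ \varphi^1 = \eta_{X \bullet I}$ for $\eta$ the monad unit of $(-) \circ J$ at $X \bullet I$. Using $\bullet$-functoriality and naturality of the unit constraint, ${\cong}_{X \bullet A, J} \circ ((X \bullet A) \bullet \tau) \circ \varphi^1$ rewrites as $(X \bullet \eta) \circ \bigl[{\cong}_{X \bullet I, J} \circ ((X \bullet I) \bullet \tau) \circ \alpha^{-1} \circ (X \bullet \delta)\bigr]$; the bracketed composite collapses to $1_{X \bullet I}$ by the triangle coherence together with the counit axiom ${\cong}_{I,J} \circ (I \bullet \tau) \circ \delta = 1_I$ of the $\bullet$-comonoid $(I, \delta, \tau)$. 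Lemma \ref{Lemma2} then gives $\beta_X^{-1} \circ (X \bullet \eta) = \bigl[(X \bullet I) \circ \eta\bigr] \circ {\cong}^{-1}_{X \bullet I}$, and the bimonoid identity $\varepsilon \circ \eta = \tau$ finishes: $\tilde t \circ \varphi^1 = \bigl[(X \bullet I) \circ \tau\bigr] \circ {\cong}^{-1}_{X \bullet I} = \eta_{X \bullet I}$.

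Now show that $t \circ H(\varphi^0)$ is an idempotent endomorphism of $H(X \bullet I)$ in $\mathcal M_J$. Unwinding via the adjunction, this amounts to the equality of the two composites $X \bullet I \to (X \bullet I) \circ J$ obtained by pre- and post-composing $\tilde t \circ \varphi^0 = \bigl[(X \bullet I) \circ \varepsilon\bigr] \circ \beta_X^{-1} \circ \rho \circ \xi$ (where $\xi = {\cong}_{X,J} \circ (X \bullet \tau)$) with its own $J$-module extension. The diagrammatic chase uses the Hopf module compatibility \eqref{eq2.2} between $\gamma$ and $\rho$ on $X$, coassociativity of $\Delta$, Lemmas \ref{Lemma1}, \ref{Lemma3}, \ref{Lemma4} controlling the interaction of $\beta_X$ with $\varepsilon$, $\Delta$, and $\eta$, and the duoidal axioms \eqref{eq1.1}, \eqref{eq1.2}, and \eqref{eq:M68} governing $\zeta$ and the unit constraints. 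Once idempotence is established, the assumption (invoked throughout Section \ref{sec:fthm}) that idempotents split in $\mathcal M$ — hence in $\mathcal M_J$ — yields the splitting $(e, s)$ with $se = 1_E$ and $es = t \circ H(\varphi^0)$; together with $t$ this is precisely the contractible equalizer witnessing $H$-contractibility of the pair.

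The main obstacle is the idempotence of $t \circ H(\varphi^0)$. While the retraction identity follows cleanly from Lemma \ref{Lemma2} combined with coherence and the bimonoid axiom $\varepsilon \eta = \tau$, idempotence encodes the Hopf-module action-coaction compatibility against the inverse Galois morphism and genuinely uses that $\beta_X$ is iso; its verification is a nontrivial commutative-diagram argument assembling all of Lemmas \ref{Lemma1}–\ref{Lemma4} together with \eqref{eq2.2}.
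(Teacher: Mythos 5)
Your construction of the contraction $t$ is essentially the paper's $\theta$ (packaged through the free--forgetful adjunction for $(-)\circ J$), and your verification of $t\cdot H(\varphi^1)=1$ via Lemma \ref{Lemma2}, the counit axiom of the comonoid $(I,\delta,\tau)$ and the bimonoid identity $\varepsilon.\eta=\tau$ is correct and matches the paper's. The gap is in the second half: you propose to prove only that $t\cdot H(\varphi^0)$ is \emph{idempotent} and then split it. That is strictly weaker than what is needed. The Beck-type identity that makes $(H\varphi^0,H\varphi^1)$ a contractible pair --- and the one the paper actually proves --- is
$$
H(\varphi^1)\cdot t\cdot H(\varphi^0)\;=\;H(\varphi^0)\cdot t\cdot H(\varphi^0).
$$
This identity together with $t\cdot H(\varphi^1)=1$ \emph{implies} idempotence of $e:=t\cdot H(\varphi^0)$ (since $e\cdot e=t\cdot H(\varphi^0)\cdot e=t\cdot H(\varphi^1)\cdot e=e$), but the converse fails. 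Concretely, if you only know that $e$ is idempotent and split it as $e=i\cdot s$ with $s\cdot i=1$, there is no way to conclude $H(\varphi^0)\cdot i=H(\varphi^1)\cdot i$: the splitting mono $i$ need not even be a fork for the pair, let alone its (absolute) equalizer. The displayed identity is exactly the statement that $H\varphi^0$ and $H\varphi^1$ agree after $e$, and it is what yields $H(\varphi^0)\cdot i=H(\varphi^1)\cdot i$ upon cancelling the split epimorphism $s$. So you must establish the displayed identity rather than mere idempotence. The paper does this by a naturality argument that reduces it to showing that
$(\varphi^0\circ A).\beta_X^{-1}.\rho=(\varphi^1\circ A).\beta_X^{-1}.\rho$,
which in turn follows from Lemma \ref{Lemma3}, coassociativity of the coaction $\rho$, naturality of $\beta$, and the fact that $\rho$ is a morphism of $A$-modules; your sketch of the ``idempotence'' step would have to be replaced by (or upgraded to) this argument.

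A smaller point: the proposition as stated makes no assumption that idempotents split --- ``contractible by $H$'' here means precisely that the image pair admits a contraction $t$ satisfying the two identities above, and the splitting of the resulting idempotent (hence the existence of the equalizer) is deferred to Corollary \ref{cor:split}, where that hypothesis is actually available. Importing the splitting into the proof of this proposition changes what is being proved.
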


\begin{proof}
Introduce the following morphism to be called $\theta$: 
$$
\xymatrix@C=10pt@R=8pt{
((X\bullet A)\bullet I)\circ J 
\ar[rrr]^-{(\beta_X^{-1}\bullet I)\circ J}
&&& (((X\bullet I)\circ A)\bullet I)\circ J
 \ar[rrrr]^-{(((X\bullet I)\circ \varepsilon)\bullet \tau)\circ J}
&&&& (((X\bullet I)\circ J)\bullet J)\circ J\ar[r]^-\cong&\\
&&&  \qquad\ (X\bullet I)\circ(J\circ J)\ar[rrrr]^-{(X\bullet I)\circ\varpi}
&&&& (X\bullet I)\circ J.\ \ \qquad\qquad}
$$
It is natural in $X$ by the naturality of $\beta$, it is a
morphism of $J$-modules by the associativity of the multiplication
$\varpi:J\circ J\to J$, and it obeys $\theta.(\varphi^1\circ J)=(X\bullet
I)\circ J$ by Lemma \ref{Lemma2},
the fourth bimonoid axiom in Definition \ref{Def:bimonoid}, 
and the counitality of the comonoid $I$ and unitality of the monoid
$J$. 
In order to prove the equality $(\varphi^1\circ
J).\theta.(\varphi^0\circ J)=(\varphi^0\circ J).\theta.(\varphi^0\circ
J)$, note that in the diagram
$$
\xymatrix{
(X\bullet I)\circ J
\ar[rr]^-{\varphi^0\circ J=(\rho\bullet I)\circ J}
\ar[d]_-{(X\bullet\tau)\circ J}
&& ((X\bullet A)\bullet I)\circ J\ar[d]_-{((X\bullet A)\bullet\tau)\circ J}
\ar@/^7pc/[dddddd]^-\theta\\
(X\bullet J)\circ J \ar[d]_-{\cong} && ((X\bullet A)\bullet J)\circ J
\ar[d]_-{\cong}\\
X\circ J\ar[rr]^-{\rho\circ J} && (X\bullet A)\circ J
\ar[d]_-{\beta^{-1}_{X}\circ J} \\
(((X\bullet A)\bullet I)\circ A)\circ J
\ar[d]_{\cong}
&& ((X\bullet I)\circ A)\circ J
\ar[ll]_-{(\varphi^{0,1}\circ A)\circ J}\ar[d]_-{\cong} \\
((X\bullet A)\bullet I)\circ (A\circ J)
\ar[d]_-{((X\bullet A)\bullet I)\circ (\varepsilon\circ J)}
&&   (X\bullet I)\circ(A\circ J)
\ar[d]_-{(X\bullet I)\circ(\varepsilon\circ J)} \\
((X\bullet A)\bullet I)\circ (J\circ J)
\ar[d]_{((X\bullet A)\bullet I)\circ \varpi}
&&   (X\bullet I)\circ(J\circ J)\ar[d]_-{(X\bullet I)\circ\varpi} \\
((X\bullet A)\bullet I)\circ J
&& (X\bullet I)\circ J\ar[ll]^-{\varphi^{0,1}\circ J},}
$$
both regions commute by naturality, for either choice $\varphi^{0}$ or
$\varphi^{1}$ as $\varphi^{0,1}$.
So it suffices to check that
\begin{equation}\label{eq:epsinv}
\xymatrix{
X \ar[r]^-{\rho}
& X\bullet A\ar[r]^-{\beta^{-1}_{X}}
&(X\bullet I)\circ A
\ar@<2pt>[rrr]^-{\varphi^0\circ A}
\ar@<-2pt>[rrr]_-{\varphi^1\circ A}
&&&  ((X\bullet A)\bullet I)\circ A }
\end{equation}
is a fork. This follows 
by Lemma \ref{Lemma3},
by coassociativity of the $A$-coaction on $X$, 
and naturality of $\beta$ together with the fact that the $A$-coaction
on $X$ is a morphism of $A$-modules.
\end{proof}

\begin{corollary}\label{cor:split}
Let $\mathcal M$ be a duoidal category in which idempotent morphisms split and
equalizers of $
H:=\big(
\xymatrix{
\mathcal{M}^I \ar[r]^-{U^I}
& \mathcal{M}\ar[r]^-{(-)\circ J}
& \mathcal{M}_J}\big)
$-contractible equalizer pairs exist.
Then for any bimonoid $A$ in $\mathcal M$ such that (\ref{eq:beta^A})
is a natural isomorphism, there exists the equalizer
\begin{equation}\label{eq:XcoA}
\xymatrix{ X^{\co} \ar[r]^\iota
&X\bullet I \ar@<2pt>[rr]^-{\varphi^0} \ar@<-2pt>[rr]_-{\varphi^1}
&&  (X\bullet A)\bullet I.}
\end{equation}
It provides a right adjoint $(-)^{\co}$ of $(-)\circ A:{\mathcal M}^I\to
{\mathcal M}^A_A$.
\end{corollary}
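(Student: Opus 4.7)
The plan is to reduce the existence of \eqref{eq:XcoA} to the two hypotheses on $\mathcal M$, and then to read off the adjoint from the general construction of Section \ref{sec:coinv}.

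First I would feed Proposition \ref{prop:split} directly: for any $A$-Hopf module $X$ it produces a morphism $\theta$ in $\mathcal M_J$ satisfying $\theta \cdot H(\varphi^1) = \id$ and $H(\varphi^1) \cdot \theta \cdot H(\varphi^0) = H(\varphi^0) \cdot \theta \cdot H(\varphi^0)$. These are exactly the axioms of a split (contractible) parallel pair $(H(\varphi^0), H(\varphi^1))$ in $\mathcal M_J$, but by themselves they do not yet deliver a contractible equalizer, since no candidate equalizer object has been exhibited.

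To upgrade this to an $H$-contractible equalizer pair I would split the idempotent $e := \theta \cdot H(\varphi^0)$ on $H(X \bullet I)$. That $e$ is idempotent follows immediately from the two identities above: $e^2 = \theta H(\varphi^1) \theta H(\varphi^0) = \theta H(\varphi^0) = e$. By the first hypothesis, idempotents split in $\mathcal M$; since $\mathcal M_J$ is monadic over $\mathcal M$ and idempotent splittings are absolute colimits, they split in $\mathcal M_J$ as well. Writing $e = s\pi$ with $\pi s = \id$, a short calculation using $\theta H(\varphi^1) = \id$ shows that $s$ is the equalizer of $H(\varphi^0)$ and $H(\varphi^1)$ in $\mathcal M_J$ and that this equalizer is contractible, hence absolute. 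In particular $(\varphi^0, \varphi^1)$ is an $H$-contractible equalizer pair, and the second hypothesis on $\mathcal M$ now yields its equalizer \eqref{eq:XcoA} in $\mathcal M^I$ for every $A$-Hopf module $X$.

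For the adjoint statement I would specialize Proposition \ref{prop:adjoints} to the comodule monoid $B = A$. By Lemma \ref{lem:AcoA}, $A^{\co} \cong I$, so $\mathcal M^I_{A^{\co}}$ coincides with $\mathcal M^I$ and the relative coequalizer \eqref{eq:BcoA_product} is trivial, hence preserved by every functor. With the equalizer \eqref{eq:XcoA} now available for every $X$, all hypotheses of Proposition \ref{prop:adjoints} are met, and its conclusion provides a right adjoint $(-)^{\co} : \mathcal M^A_A \to \mathcal M^I$ of $(-) \circ A : \mathcal M^I \to \mathcal M^A_A$. The one point at which care is required is step two, namely pinpointing the idempotent $\theta \cdot H(\varphi^0)$ and verifying that its splitting is genuinely the equalizer of the original pair; past that point the argument is formal, drawing only on Proposition \ref{prop:split} and the machinery of Sections \ref{sec:Dubuc-Beck} and \ref{sec:coinv}.
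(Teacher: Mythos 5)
Your proposal is correct and follows essentially the same route as the paper: Proposition \ref{prop:split} gives the contractible pair, splitting of idempotents upgrades it to a contractible (hence absolute) equalizer in $\mathcal M_J$, so $(\varphi^0,\varphi^1)$ is an $H$-contractible equalizer pair whose equalizer exists by hypothesis, and the adjoint then comes from the Adjoint Lifting Theorem machinery of Section \ref{sec:coinv}. The only difference is that you spell out the standard ``split pair plus split idempotents yields a split equalizer'' argument that the paper leaves implicit, and you reach the adjoint via Proposition \ref{prop:adjoints} with $B=A$ and Lemma \ref{lem:AcoA} rather than by citing the remark around \eqref{eq:McoA} directly --- both are valid and amount to the same application of the Adjoint Lifting Theorem.
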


\begin{proof}
By Proposition \ref{prop:split}, $(H\varphi^0,H\varphi^1)$ is a
contractible pair in $\mathcal M_J$. Since idempotent morphisms
are assumed to split in $\mathcal M$ --- hence also in $\mathcal M_J$
---, their (contractible) equalizer exists. That is to say,
$(\varphi^0,\varphi^1)$ is an $H$-contractible equalizer pair, so their
equalizer $X^{\co}$ exists by assumption. The final claim follows by the
considerations in Section \ref{sec:coinv}, see the text around
(\ref{eq:McoA}). 
\end{proof}

\subsection{Fully faithfulness}

Let $\mathcal M$ be a duoidal category in which idempotent morphisms
split and equalizers of 
$
H:=\big(
\xymatrix{
\mathcal{M}^I \ar[r]^-{U^I}
& \mathcal{M}\ar[r]^-{(-)\circ J}
& \mathcal{M}_J}\big)
$-contractible equalizer pairs exist.
Then combining the results in Section \ref{sec:rel_Hopf} and
Section \ref{sec:Hopf_coinv}, for any bimonoid $A$ in $\mathcal M$ such
that the canonical comonad morphism (\ref{eq:beta^A}) is an isomorphism, we
obtain an adjoint triangle 
\begin{equation}\label{eq:triangle}
\xymatrix{
&&&&&&  \mathcal{M}^A_A
\ar@{->}@/^1pc/[llllllddd]^{N =(-)^{\co}}
\ar@{->}@/_0.5pc/[ddd]_{U^A}^-\dashv\\
&&&&&& \\   &&&&&& \\
\mathcal{M}^I \ar@{->}@/^0.5pc/[rrr]^(.7){U^I}_-\perp
\ar@{->}@/^1pc/[rrrrrruuu]^{K=(-)\circ A}_-\perp
&&&  \mathcal{M} \ar@{->}@/^0.5pc/[rrr]^{F_A=(-)\circ A}_-\perp
\ar@{->}@/^0.5pc/[lll]^{F^I=(-)\bullet I}
&&&  \mathcal{M}_A. \ar@{->}@/^0.5pc/[lll]^{U_A}
\ar@{->}@/_0.5pc/[uuu]_{F^A=(-)\bullet A}}\
\end{equation}
The aim of this section is to find sufficient conditions for $K$ to be fully
faithful.

\begin{lemma}\label{lem:ZA_coinv}
Let $A$ be a bimonoid in a duoidal category $\mathcal M$ such that
(\ref{eq:beta^A}) is a natural isomorphism. Then for any right
$I$-comodule $Z$, $$
\xymatrix@C=15pt{
Z\circ J \ar[r]^-{\rho\circ J}
& (Z\bullet I)\circ J \ar[r]^-{\cong}
& ((Z\circ I)\bullet I)\circ J\ar[rr]^-{((Z\circ \eta)\bullet I)\circ J}
&& ((Z\circ A)\bullet I)\circ J
\ar@<2pt>[r]^-{\varphi^0\circ J}
\ar@<-2pt>[r]_-{\varphi^1\circ J}
&  (((Z\circ A)\bullet A)\bullet I)\circ J}
$$
is a contractible equalizer in ${\mathcal M}_J$.
\end{lemma}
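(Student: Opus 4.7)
I would specialize Proposition~\ref{prop:split} to the $A$-Hopf module $X:=Z\circ A$ obtained by applying the lifted functor of Proposition~\ref{prop:rel_Hopf_lifting} to the $I$-comodule $Z$; its $A$-action is multiplication on the rightmost factor and its $A$-coaction is $\rho_X=\lambda^0_Z.(\rho\circ A)$. Denoting by $f$ the morphism of the statement from $Z\circ J$ to $(X\bullet I)\circ J$, Proposition~\ref{prop:split} supplies a $J$-module morphism $\theta:((X\bullet A)\bullet I)\circ J\to(X\bullet I)\circ J$ with $\theta.(\varphi^1\circ J)=1$. To promote this to the claimed contractible equalizer it suffices to construct a $J$-module morphism $s:(X\bullet I)\circ J\to Z\circ J$ satisfying $s.f=1_{Z\circ J}$ and $f.s=\theta.(\varphi^0\circ J)$: these two identities, combined with $\theta.(\varphi^1\circ J)=1$, formally imply both the fork condition $(\varphi^0\circ J).f=(\varphi^1\circ J).f$ and the absoluteness of the resulting equalizer.

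My candidate for $s$ is the composite
\[
((Z\circ A)\bullet I)\circ J\xrightarrow{((Z\circ A)\bullet\tau)\circ J}((Z\circ A)\bullet J)\circ J\xrightarrow{\cong}(Z\circ A)\circ J\xrightarrow{(Z\circ\varepsilon)\circ J}(Z\circ J)\circ J\xrightarrow{\cong}Z\circ(J\circ J)\xrightarrow{Z\circ\varpi}Z\circ J,
\]
built from the counit $\varepsilon:A\to J$, the comparison $\tau:I\to J$, the multiplication $\varpi:J\circ J\to J$ and the unit and associativity constraints. Since on both source and target the right $J$-action is given by $\varpi$ applied to the rightmost $J$, the associativity of $\varpi$ makes $s$ a morphism in $\mathcal{M}_J$. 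The identity $s.f=1_{Z\circ J}$ is then a standard chase using the fourth bimonoid axiom $\varepsilon\eta=\tau$ from Definition~\ref{Def:bimonoid} (which collapses $(Z\circ\varepsilon).(Z\circ\eta)$ to $Z\circ\tau$), the counitality of the $I$-coaction $\rho$ on $Z$, the unit law of the monoid $(J,\varpi,\tau)$, and the coherence of the unit constraints.

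The crucial step is $f.s=\theta.(\varphi^0\circ J)$. Here I would invoke Lemma~\ref{Lemma4} with $M=Z$, which, together with the explicit form $\rho_X=\lambda^0_Z.(\rho\circ A)$ of the $A$-coaction on $X$, rewrites $\beta_X^{-1}.\rho_X:Z\circ A\to(X\bullet I)\circ A$ as the composite
\[
Z\circ A\xrightarrow{\rho\circ A}(Z\bullet I)\circ A\xrightarrow{\cong}((Z\circ I)\bullet I)\circ A\xrightarrow{((Z\circ\eta)\bullet I)\circ A}((Z\circ A)\bullet I)\circ A,
\]
that is, in exactly the shape of $f$ with the rightmost $J$ replaced by $A$. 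Substituting this expression into the defining composite of $\theta$ and precomposing with $\varphi^0\circ J=(\rho_X\bullet I)\circ J$, the rightmost $A$ is counited to $J$ by $\varepsilon$, the outer $I$ is sent to $J$ by $\tau$, the unit isomorphism $(-\bullet J)\cong(-)$ absorbs one $J$, and $(X\bullet I)\circ\varpi$ multiplies the remaining two; after invoking the naturality of $\zeta$ and of the unit constraints, the result matches $f.s$. The main obstacle will be the detailed bookkeeping of the various unit, associativity and interchange isomorphisms through this substitution; no conceptual input beyond Lemmas~\ref{Lemma1}--\ref{Lemma4} is required.
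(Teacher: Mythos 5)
Your construction is essentially the paper's own: specialize Proposition \ref{prop:split} to the Hopf module $X=Z\circ A$, exhibit a retraction $s$ (the paper calls it $\pi$, and your composite agrees with it up to reordering $\varepsilon$ and $\tau$ through the coherence isomorphisms) of the displayed morphism $\upsilon$, verify $s.\upsilon=1$ from the fourth bimonoid axiom, counitality of the $I$-coaction on $Z$ and unitality of $(J,\varpi,\tau)$, and verify $\upsilon.s=\theta.(\varphi^0\circ J)$ via Lemma \ref{Lemma4} applied to $M=Z$ together with the identification of the $A$-coaction on $Z\circ A$ as $\lambda^0_Z.(\rho\circ A)$. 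All of that is correct and matches the published argument.

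There is, however, one false logical claim. The three identities $s.\upsilon=1$, $\upsilon.s=\theta.(\varphi^0\circ J)$ and $\theta.(\varphi^1\circ J)=1$ do \emph{not} formally imply the fork condition $(\varphi^0\circ J).\upsilon=(\varphi^1\circ J).\upsilon$. A counterexample in $\mathsf{set}$: take $B=\{1,2\}$, $C=\{a,b,c\}$, $\upsilon=s=1_B$, $\theta(a)=1$, $\theta(b)=\theta(c)=2$, $\varphi^1(1)=a$, $\varphi^1(2)=b$, $\varphi^0(1)=a$, $\varphi^0(2)=c$; all three identities hold, yet $\varphi^0\neq\varphi^1$ on the image of $\upsilon$. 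What you are missing is the second half of the contractibility data established in (the proof of) Proposition \ref{prop:split}, namely $(\varphi^1\circ J).\theta.(\varphi^0\circ J)=(\varphi^0\circ J).\theta.(\varphi^0\circ J)$. With it the fork does follow: for $i=0,1$ one has $(\varphi^{i}\circ J).\upsilon=(\varphi^{i}\circ J).\upsilon.s.\upsilon=(\varphi^{i}\circ J).\theta.(\varphi^0\circ J).\upsilon$, and the two right-hand sides coincide by that extra identity. Once the fork is in place, $(\upsilon,s,\theta)$ is genuine split-equalizer data in $\mathcal M_J$ and the contractibility (hence absoluteness) is automatic. So the gap is real but entirely local: quote the full conclusion of Proposition \ref{prop:split}, not only $\theta.(\varphi^1\circ J)=1$.
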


\begin{proof}
We know from Proposition \ref{prop:split} that for the Hopf module
$Z\circ A$, $\theta:(((Z\circ A)\bullet A)\bullet I)\circ J \to ((Z\circ
A)\bullet I)\circ J$
obeys the equalities $\theta.(\varphi^1\circ J)=((Z\circ A)\bullet I)\circ J$
and $(\varphi^1\circ J).\theta.(\varphi^0\circ J)=(\varphi^0\circ
J).\theta.(\varphi^0 \circ J)$.
It follows 
by the fourth bimonoid axiom in Definition \ref{Def:bimonoid},
by the counitality of the $I$-coaction on $Z$ and unitality of the
monoid $J$, 
that
$$
\pi:=\big(
\xymatrix{
((Z\circ A)\bullet I)\circ J\ar[rr]^-{((Z\circ \varepsilon)\bullet \tau)\circ J}
&& ((Z\circ J)\bullet J)\circ J\ar[r]^-{\cong}
& Z\circ (J\circ J)\ar[r]^-{Z\circ\varpi}
& Z\circ J}\big)
$$
is a retraction of
$$
\upsilon:=
\big(
\xymatrix{
Z\circ J\ar[r]^-{\rho\circ J}&
(Z\bullet I)\circ J\ar[r]^-\cong&
((Z\circ I)\bullet I)\circ J\ar[rr]^-{((Z\circ \eta)\bullet I)\circ J}&&
((Z\circ A)\bullet I)\circ J}\big).
$$
Finally, $\upsilon.\pi=\theta.(\varphi^0\circ J)$
by  Lemma \ref{Lemma4}, and naturality of the associativity and unit
constraints.
\end{proof}

\begin{theorem}\label{thm:ff}
Let $\mathcal M$ be a duoidal category in which idempotent morphisms split and
$
H:=\big(
\xymatrix{
\mathcal{M}^I \ar[r]^-{U^I}
& \mathcal{M}\ar[r]^-{(-)\circ J}
& \mathcal{M}_J}\big)
$
is comonadic. Let $A$ be a bimonoid in $\mathcal M$. If (\ref{eq:beta^A}) is a
natural isomorphism, then the functor $K$ in (\ref{eq:triangle}) is fully
faithful.  
\end{theorem}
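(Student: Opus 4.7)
The plan is to prove that the unit $\hat\nu\colon \mathrm{Id} \Rightarrow NK$ of the adjunction $K\dashv N$ from Corollary \ref{cor:split} is a natural isomorphism; this is equivalent to $K$ being fully faithful. First, since $H$ is assumed comonadic, the dual of Beck's monadicity theorem recalled in Section \ref{sec:Dubuc-Beck} implies that $H$ reflects isomorphisms, so it suffices to verify that $H\hat\nu_Z$ is an isomorphism in $\mathcal M_J$ for every right $I$-comodule $Z$.

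Next I analyze both source and target of $\hat\nu_Z\colon Z\to (Z\circ A)^{\co}$ after applying $H$. On the target side, Proposition \ref{prop:split} applied to the Hopf module $X = Z\circ A$ shows that the parallel pair $(\varphi^0,\varphi^1)$ defining $(Z\circ A)^{\co}$ as an equalizer in $\mathcal M^I$ is $H$-contractible. Since $H$ is comonadic, it preserves equalizers of $H$-contractible equalizer pairs; hence $H((Z\circ A)^{\co})$, equipped with $H\iota$, is the equalizer in $\mathcal M_J$ of $(H\varphi^0,H\varphi^1)$. On the source side, Lemma \ref{lem:ZA_coinv} exhibits $HZ = Z\circ J$, equipped with the morphism $\upsilon$ of that lemma, as a (contractible) equalizer of the same parallel pair in $\mathcal M_J$. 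By the uniqueness of equalizers, there is a unique isomorphism $HZ\to H((Z\circ A)^{\co})$ in $\mathcal M_J$ rendering commutative the triangle with legs $\upsilon$ and $H\iota$.

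The remaining step is to identify $H\hat\nu_Z$ with this canonical isomorphism. Unwinding the construction of the unit of $K\dashv N$ provided by the Adjoint Lifting Theorem (cf.~Section \ref{sec:Dubuc-Beck}), $\iota\cdot\hat\nu_Z\colon Z\to (Z\circ A)\bullet I$ is equal to the composite
\[
Z \stackrel{\rho_Z}{\longrightarrow} Z\bullet I \stackrel{\cong}{\longrightarrow} (Z\circ I)\bullet I \stackrel{(Z\circ \eta)\bullet I}{\longrightarrow} (Z\circ A)\bullet I;
\]
this composite is seen to equalize $\varphi^0$ and $\varphi^1$ by the bimonoid axioms and the coaction axioms on $Z$. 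Applying the functor $(-)\circ J$ to this composite reproduces precisely the morphism $\upsilon$ of Lemma \ref{lem:ZA_coinv}, and hence $H\iota\cdot H\hat\nu_Z = \upsilon$. This identifies $H\hat\nu_Z$ as the canonical isomorphism between the two equalizers, completing the proof.

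I expect the main obstacle to be the explicit identification of $\iota\cdot\hat\nu_Z$ in the last paragraph: one must trace through the adjoint lifting correspondence, or equivalently exploit the triangle identities together with the form of the counit $\hat\epsilon$ described in Section \ref{sec:Dubuc-Beck}, to pin this composite down. Once that identification is available, the result is an immediate consequence of the comonadicity of $H$ combined with Lemma \ref{lem:ZA_coinv}.
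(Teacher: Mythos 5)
Your proposal is correct and follows essentially the same route as the paper's proof: both exhibit $H\hat\nu_Z$ as the canonical comparison between the equalizer of Lemma \ref{lem:ZA_coinv} and the image under $H$ of the ($H$-contractible) equalizer from Corollary \ref{cor:split}, identify it via the Adjoint Lifting Theorem's construction of the unit, and conclude by the fact that a comonadic functor reflects isomorphisms. The step you flag as the main obstacle is handled in the paper at the same level of detail, namely by appealing to the explicit construction of the unit in the proof of the Adjoint Lifting Theorem.
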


\begin{proof}
In the diagram 
$$
\xymatrix{
(Z\circ A)^{\co}\circ J \ar[r]^-{\iota\circ J}
&  ((Z\circ A)\bullet I)\circ J
\ar@<2pt>[rr]^-{\varphi^0\circ J}
\ar@<-2pt>[rr]_-{\varphi^1\circ J}
&&  (((Z\circ A)\bullet A)\bullet I)\circ J \\
Z\circ J \ar@<2pt>[r]^-{\upsilon}\ar@{->}@/0pc/[u]^{\nu_Z\circ J}
& ((Z\circ A)\bullet I)\circ J \ar@{=}@/0pc/[u]
\ar@<2pt>[rr]^-{\varphi^0\circ J}
\ar@<-2pt>[rr]_-{\varphi^1\circ J}
&&  (((Z\circ A)\bullet A)\bullet I)\circ J \ar@{=}@/0pc/[u]}
$$
in $\mathcal{M}_J$,
the bottom row is an equalizer for any $I$-comodule $Z$ by
Lemma \ref{lem:ZA_coinv}. 
By Corollary \ref{cor:split}, the top row is obtained by applying $H$ to
the equalizer of an $H$-contractible equalizer pair. Since $H$ is 
comonadic, it preserves such equalizers. Thus the top row is an equalizer too.
Recalling from the proof of the Adjoint Lifting Theorem \cite{J. Power}
the construction of the unit $\nu$ of the adjunction $K\dashv N$, the square
on the left of the diagram is seen to commute. Thus the diagram is serially
commutative. This proves that $H(\nu_Z)=\nu_Z\circ J$ is an isomorphism. Since
$H$ is comonadic, it reflects isomorphisms. This proves that the unit $\nu$ of
the adjunction $K\dashv N$ is a natural isomorphism. Hence $K$ is
fully faithful, see e.g. the dual form of \cite[vol. 1 page 114, Proposition
3.4.1]{Bor}.   
\end{proof}

\subsection{The Fundamental Theorem of Hopf modules}

Our final task is to find conditions under which the functor $K$ in
(\ref{eq:triangle}) is an equivalence.

\begin{lemma}\label{lem:theta}
For any bimonoid $A$ in a duoidal category $\mathcal M$, and for any
$I$-comodule $Z$,
$$
\vartheta_{Z}:=\big(
\xymatrix@C=20pt{
Z\circ A\ar[r]^-{\rho\circ A}
& (Z\bullet I)\circ A\ar[r]^-{\cong}
& (Z\bullet I)\circ(J\bullet A)\ar[r]^-{\zeta}
& (Z\circ J)\bullet(I\circ A)\ar[r]^-{\cong} & (Z\circ J)\bullet A}\big)
$$
is equal to $\beta_{Z\circ J}.(\nu_Z\circ A)$. Here $\nu$
denotes the unit of the adjunction
$$
H:=\big(
\xymatrix{
\mathcal{M}^I \ar[r]^-{U^I}
& \mathcal{M}\ar[r]^-{(-)\circ J}
& \mathcal{M}_J}\big)
\dashv
G:=\big(
\xymatrix{
\mathcal{M}_J \ar[r]^-{U_J}
& \mathcal{M}\ar[r]^-{(-)\bullet I}
& \mathcal{M}^I}\big),
$$
$\beta$ is the natural transformation
(\ref{eq:beta^A}) and the $A$-action on $Z\circ J$ is induced by the counit (a
monoid morphism) $\varepsilon:A\to J$.
In particular, if $H$ is fully faithful and (\ref{eq:beta^A}) is a natural
isomorphism, then $\vartheta$ is a natural isomorphism.
\end{lemma}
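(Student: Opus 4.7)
The plan is a direct computation: expand both sides as long composites of structural morphisms, and then reduce $\beta_{Z\circ J}\cdot(\nu_Z\circ A)$ to $\vartheta_Z$ by repeated naturality of $\zeta$ together with three ingredients --- the counit axiom for the comonoid $A$, the left unit axiom for the monoid $J$, and the triangle coherence in the monoidal category $(\mathcal{M},\circ,I)$.

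First I would unpack $\nu_Z$. Since $H$ is the composite of left adjoints $\mathcal{M}^I\xrightarrow{U^I}\mathcal{M}\xrightarrow{(-)\circ J}\mathcal{M}_J$, its unit at $Z$ factors as the $I$-coaction $\rho_Z\colon Z\to Z\bullet I$ (the unit of $U^I\dashv F^I$) followed by the image under $(-)\bullet I$ of the morphism $Z\xrightarrow{\cong}Z\circ I\xrightarrow{Z\circ\tau}Z\circ J$ (the unit of $(-)\circ J\dashv U_J$). Substituting this description, the explicit formula \eqref{eq:beta^A} for $\beta_{Z\circ J}$, and the explicit action $\gamma=(Z\circ\varpi)\cdot(\cong)\cdot((Z\circ J)\circ\varepsilon)$, presents $\beta_{Z\circ J}\cdot(\nu_Z\circ A)$ as an explicit composite involving $\rho$, $\tau$, $\Delta$, $\zeta$, $\varepsilon$, $\varpi$ and a number of associator and unitor constraints.

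The reduction then proceeds in two moves. First, push $((Z\circ J)\bullet I)\circ(\varepsilon\bullet A)$ past $\zeta$ by naturality in the third argument, and invoke the counit axiom of the comonoid $A$ --- namely that $A\xrightarrow{\Delta}A\bullet A\xrightarrow{\varepsilon\bullet A}J\bullet A$ is the inverse of the left unit constraint of $\bullet$ --- to collapse the $\Delta$-step of $\beta_{Z\circ J}$ together with the $\varepsilon$-portion of $\gamma$ into the single unit isomorphism $A\xrightarrow{\cong}J\bullet A$ that appears in $\vartheta_Z$. Second, push $((Z\circ\tau)\bullet I)\circ(J\bullet A)$ past $\zeta$ by naturality in the first argument; the $\tau$-step of $\nu_Z\circ A$ combined with the $\varpi$-portion of $\gamma$ then reduces, via the left unit axiom of the monoid $(J,\varpi,\tau)$ and the triangle identity, to a structural isomorphism $(Z\circ I)\circ J\xrightarrow{\cong}Z\circ J$. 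One further naturality square of $\zeta$ in the first argument lets this isomorphism cancel against the unit iso $Z\xrightarrow{\cong}Z\circ I$ introduced by $\nu_Z$, leaving visibly $\vartheta_Z$.

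The main obstacle is bookkeeping --- tracking the many associator and unitor constraints appearing in the composite and selecting the correct naturality square of $\zeta$ (in whichever of its four arguments) at each step --- but once the structural simplifications are lined up, every individual step is either a naturality square or one of the three ingredients above. For the final ``in particular'' clause, fully faithfulness of $H$ is equivalent to its unit $\nu$ being a natural isomorphism, so $\nu_Z\circ A$ is an isomorphism; if additionally $\beta$ is a natural isomorphism then so is $\beta_{Z\circ J}$, and the identity just established exhibits $\vartheta_Z$ as a composite of two isomorphisms.
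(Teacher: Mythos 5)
Your proposal is correct and follows essentially the same route as the paper, whose one-line proof cites exactly the two ingredients you isolate --- counitality of $\Delta:A\to A\bullet A$ and unitality of $\varpi:J\circ J\to J$ --- with the naturality of $\zeta$ and the triangle coherence for $(\mathcal M,\circ,I)$ left implicit. Your decomposition of $\nu_Z$ as $\rho$ followed by $F^I$ applied to $Z\cong Z\circ I\xrightarrow{Z\circ\tau}Z\circ J$, and the two $\zeta$-naturality moves collapsing the $\Delta/\varepsilon$ pair and the $\tau/\varpi$ pair, are exactly the computation the paper is summarizing.
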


\begin{proof}
The claim follows 
by counitality of the comultiplication $\Delta:A\to A\bullet A$ and
unitality of the multiplication $\varpi:J\circ J \to J$.
\end{proof}

\begin{theorem}\label{thm:fthm}
Let $\mathcal M$ be a duoidal category in which idempotent morphisms
split and
$
H:=\big(
\xymatrix{
\mathcal{M}^I \ar[r]^-{U^I}
& \mathcal{M}\ar[r]^-{(-)\circ J}
& \mathcal{M}_J}\big)
$
is fully faithful. Then for any bimonoid $A$ in $\mathcal M$, the following
assertions are equivalent. 
\begin{itemize}
\item[{(i)}] $I\to A$ is an $A$-Galois extension.
\item[{(ii)}] The natural transformation $\beta$ in (\ref{eq:beta^A}) is an
isomorphism.
\item[{(iii)}] The functor $K$ in (\ref{eq:triangle}) is an equivalence.
\end{itemize}
\end{theorem}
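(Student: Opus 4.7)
My proof plan begins with the easy implications. The implication (i) $\Rightarrow$ (ii) is immediate from the definition of a Galois extension. For (iii) $\Rightarrow$ (ii) I apply formula \eqref{eq:beta_short}: if $K$ is an equivalence, then the counit $\hat\epsilon^K$ is a natural isomorphism, so $\beta = U^A \hat\epsilon^K F^A$ is too. For (ii) $\Rightarrow$ (i), Lemma~\ref{lem:AcoA} identifies $A^{\mathsf{co}} = I$ with the monoid morphism $\omega = \eta$, whence the coequalizer \eqref{eq:BcoA_product} collapses to an identity and is trivially preserved. Existence of the equalizer \eqref{eq:McoA} for every $(A,A)$-Hopf module follows from Corollary~\ref{cor:split}, whose hypotheses are satisfied because the fully faithful left adjoint $H$ is automatically comonadic: the induced endo-comonad $HG$ on $\mathcal M_J$ is idempotent (since $GH \cong \mathsf{id}$) and $\mathcal M^I$ is equivalent via $H$ to the coreflective subcategory of $HG$-coalgebras.

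The heart of the theorem is (ii) $\Rightarrow$ (iii), which I split into two steps. Since $H$ is (fully faithful, and hence) comonadic, Theorem~\ref{thm:ff} applies under (ii) and gives that $K$ is fully faithful. It remains to show that the counit $\hat\epsilon^K: KN \to \mathsf{id}_{\mathcal M^A_A}$ is a natural isomorphism, which gives essential surjectivity of $K$. By formula \eqref{eq:beta_short} together with the conservativity of the comonadic $U^A$, $\hat\epsilon^K_{F^A Q}$ is iso for every $A$-module $Q$, so the counit is iso on cofree Hopf modules. To propagate this to arbitrary $X \in \mathcal M^A_A$ I use the canonical reflexive equalizer $X \to F^A U^A X \rightrightarrows F^A U^A F^A U^A X$ in $\mathcal M^A_A$ afforded by the comonadicity of $U^A$. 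Applying the right adjoint $N$ and using $N F^A \cong F^I U_A$ yields the familiar equalizer $X^{\mathsf{co}} \to X \bullet I \rightrightarrows (X\bullet A)\bullet I$ in $\mathcal M^I$. Naturality of $\hat\epsilon^K$ produces a map of forks from the $K$-image of this equalizer to the canonical equalizer of $X$, whose two right-hand vertical components are the cofree-case isomorphisms; provided the top row is itself an equalizer, the leftmost component $\hat\epsilon^K_X$ must be iso by uniqueness of equalizers.

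The main obstacle is therefore to check that the $K$-image of the $X^{\mathsf{co}}$-equalizer is an equalizer in $\mathcal M^A_A$. I plan to use Lemma~\ref{lem:theta}: under our hypotheses $\vartheta_Z: Z\circ A \to (Z\circ J)\bullet A$ lifts to a natural isomorphism of Hopf modules (obtained as $\hat\epsilon^K_{F^A(Z\circ J)} \circ K \nu_Z$ with both factors iso), so $K \cong F^A \Phi$ as functors $\mathcal M^I \to \mathcal M^A_A$, where $\Phi Z = Z\circ J$ viewed in $\mathcal M_A$ via $\varepsilon:A\to J$. Since $F^A$ and restriction of scalars along $\varepsilon$ are right adjoints, both preserve all limits, and the result follows once $H$ preserves the $X^{\mathsf{co}}$-equalizer. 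But by Proposition~\ref{prop:split} the defining pair is $H$-contractible, so its equalizer in $\mathcal M_J$ is the splitting of a certain idempotent on $H(X\bullet I)$; this idempotent, being an endomorphism of an object in the full image of $H$, lies in that image and hence equals $H(e')$ for an idempotent $e'$ in $\mathcal M^I$, which splits since idempotents split in $\mathcal M$ and hence in $\mathcal M^I$. Applying $H$ to this splitting recovers the splitting in $\mathcal M_J$, proving that $H(X^{\mathsf{co}})$ is the desired equalizer. This closes the argument.
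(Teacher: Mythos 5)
Your proof is correct, and the easy parts coincide with the paper's: (i)$\Rightarrow$(ii) is trivial, (iii)$\Rightarrow$(ii) via \eqref{eq:beta_short}, and (ii)$\Rightarrow$(i) via Lemma \ref{lem:AcoA} and Corollary \ref{cor:split}; your only cosmetic deviation there is justifying comonadicity of $H$ through the idempotent comonad $HG$ and the coreflective-subcategory description, where the paper instead invokes separability and \cite[Proposition 3.16]{Mes} --- both are valid. The genuine divergence is in the second half of (ii)$\Rightarrow$(iii), the invertibility of the counit $\hat\epsilon$ of $K\dashv N$. The paper works down in $\mathcal M$: it transports the absolute $H$-image of \eqref{eq:XcoA} along the isomorphism $\vartheta$ of Lemma \ref{lem:theta} to show that $X^{\co}\circ A\to (X\bullet I)\circ A\rightrightarrows ((X\bullet A)\bullet I)\circ A$ is an equalizer, checks that $\beta_X^{-1}.\rho$ equalizes the pair (the fork \eqref{eq:epsinv}), defines $\epsilon_X^{-1}$ by universality, and verifies the two composites by hand using Lemmas \ref{Lemma1} and \ref{Lemma3}. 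You instead stay in $\mathcal M^A_A$: you first get $\hat\epsilon$ invertible on cofree Hopf modules from \eqref{eq:beta_short} and conservativity of $U^A$, then present an arbitrary $X$ as the equalizer of its cofree resolution and compare that presentation with its $KN$-image; the only substantive input is that $K$ preserves the equalizer \eqref{eq:XcoA}, which you extract from the natural isomorphism $K\cong F^A\circ\varepsilon^*\circ H$ (built from $\vartheta$, i.e.\ from $\hat\epsilon_{F^AHZ}.K\nu_Z$) together with the absoluteness of the split $H$-image guaranteed by Proposition \ref{prop:split} and the splitting of idempotents. So the two arguments consume the same supporting lemmas (Proposition \ref{prop:split}, Corollary \ref{cor:split}, Lemma \ref{lem:theta}, Theorem \ref{thm:ff}) but execute the final step differently: your comparison-of-equalizer-presentations argument is more abstract and dispenses with Lemmas \ref{Lemma1} and \ref{Lemma3} and the explicit verification, while the paper's construction has the advantage of exhibiting the inverse of the comparison functor concretely, as the unique $\epsilon_X^{-1}$ with $(\iota\circ A).\epsilon_X^{-1}=\beta_X^{-1}.\rho$.
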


\begin{proof}
$(i)\Rightarrow (ii)$. This assertion is trivial.

$(ii)\Rightarrow (i)$. $A$ is an $A$-comodule monoid by the first and the third
axioms of a bimonoid in Definition \ref{Def:bimonoid}. The $A$-coinvariant
part of any $A$-Hopf module exists by Corollary \ref{cor:split}. The
coinvariant part of $A$ is $I$, and the corresponding monoid morphism in
Proposition \ref{prop:omega} is the unit $\eta:I\to A$ of the monoid $A$,
by Lemma \ref{lem:AcoA}. Then the coequalizers (\ref{eq:BcoA_product})
are trivial hence they exist and are preserved by any functor --- in
particular by any power of $(-)\circ A:\mathcal M \to \mathcal M$. So if (ii)
holds then $\eta: I\to A$ is an $A$-Galois extension. 

$(iii)\Rightarrow (ii)$. If $K$ in (\ref{eq:triangle}) is an equivalence then
in particular the counit $\epsilon$ of the adjunction $K\dashv N$ is a natural
isomorphism. Thus $\beta$ in (\ref{eq:beta^A}) arises as a composite of
natural isomorphisms  
$\xymatrix@C=15pt{
(Q\bullet I)\circ A\ar[r]^-\cong&
(Q\bullet A)^\co \circ A\ar[r]^-{\epsilon_{Q\bullet A}}&
Q\bullet A}$, cf. \eqref{eq:beta_short}.

$(ii) \Rightarrow (iii)$. Since idempotent morphisms in $\mathcal M$
split, they also split in $\mathcal M^I$. Since $H$ is a left adjoint functor
and fully faithful (hence separable, in particular) by assumption, it
is comonadic by \cite[Proposition 3.16]{Mes}. Thus it follows from
Theorem \ref{thm:ff} that $K$ is fully faithful. So we only need to show that
also the counit
$$
\epsilon_X=\big(
\xymatrix{
X^{\co}\circ A  \ar[r]^-{\iota\circ A}
& (X\bullet I)\circ A \ar[rr]^-{(X\bullet \tau)\circ A}
&& (X\bullet J)\circ A\ar[r]^-{\cong}  & X\circ A\ar[r]^-{\gamma}
& X}\big)
$$
of the adjunction $K\dashv N$ is a natural isomorphism, for any $A$-Hopf
module $X$. 

Since $H$ is comonadic, it follows by Corollary \ref{cor:split} that the image
of (\ref{eq:XcoA}) under $H$ is a contractible (thus absolute) equalizer in
${\mathcal M}_J$ --- hence also in $\mathcal M$. Thus the bottom row in  
$$
\xymatrix{
X^{\co}\circ A  \ar[d]_-{\vartheta_{X^{\co},J}}\ar[r]^-{\iota\circ A}
&  (X\bullet I)\circ A
\ar[d]_-{\vartheta_{X\bullet I}}
\ar@<2pt>[rr]^-{\varphi^0\circ A}
\ar@<-2pt>[rr]_-{\varphi^1\circ A}
&&  ((X\bullet A)\bullet I)\circ A
\ar[d]_-{\vartheta_{(X\bullet A)\bullet I}} \\
(X^{\co}\circ J)\bullet A \ar[r]^-{(\iota\circ J)\bullet A }
&  ((X\bullet I)\circ J)\bullet A
\ar@<2pt>[rr]^-{(\varphi^0\circ J)\bullet A}
\ar@<-2pt>[rr]_-{(\varphi^1\circ J)\bullet A}
&&  (((X\bullet A)\bullet I)\circ J)\bullet A}
$$
is an equalizer in ${\mathcal M}$ too. The diagram serially commutes by
the naturality of $\vartheta$ and the vertical arrows are isomorphisms by
Lemma \ref{lem:theta}.
Then the top row is an equalizer too.

The inverse of $\epsilon_X$ is constructed using universality of the equalizer
$$
\xymatrix @R=15pt{
&  X\ar[d]^-{\rho}\ar@{..>}@/0pc/[ldd]_-{\epsilon_X^{-1}} && \\
&X\bullet A\ar[d]^-{\beta_X^{-1}}
\\
X^{\co}\circ A  \ar[r]^-{\iota\circ A}
&  (X\bullet I)\circ A
\ar@<2pt>[rr]^-{\varphi^0\circ A}\ar@<-2pt>[rr]_-{\varphi^1\circ A}
&&  ((X\bullet A)\bullet I)\circ A.}
$$
The vertical arrow was shown to equalize the parallel morphisms in the
bottom row after \eqref{eq:epsinv}. 
Hence there is a unique morphism $\epsilon_X^{-1}:X\rightarrow X^{\co}\circ A$
in $\mathcal M$ such that $(\iota\circ A).\epsilon^{-1}_X=\beta^{-1}_X.\rho$.
The morphism $\epsilon_X.\epsilon_X^{-1}$ is equal to
$$
\xymatrix{
X\ar[r]^-\rho&
X\bullet A\ar[r]^-{\beta_X^{-1}}&
(X\bullet I)\circ A \ar[rr]^-{(X\bullet \tau)\circ A}
&& (X\bullet J)\circ A\ar[r]^-{\cong}  & X\circ A\ar[r]^-{\gamma}
& X.}
$$
This is equal to the identity morphism $X$ by Lemma \ref{Lemma1} and the
counitality of the $A$-coaction on $X$.
Since $\iota\circ A$ is monic, $\epsilon^{-1}.\epsilon$ is equal to the
identity natural transformation $(-)^{\co}\circ A$ if and only if
$(\iota\circ A).\epsilon^{-1}.\epsilon=\iota\circ A$. That is, if and only if
for any Hopf module $X$,
$$
\xymatrix@R=10pt@C=30pt{
\big( X^{\co}\circ A\big.\ar[r]^-{\iota\circ A}
&(X\bullet I)\circ A \ar[r]^-{(X\bullet \tau)\circ A}
& (X\bullet J)\circ A\ar[r]^-{\cong}  & X\circ A\ar[r]^-{\gamma}
& X \ar[r]^-\rho
&\big. X\bullet A\big)=\\
\big(X^{\co}\circ A\big.\ar[r]^-{\iota\circ A}&
(X\bullet I)\circ A \ar[r]^-{\beta_X}&
\big. X\bullet A\big).}
$$
This holds by the compatibility condition between the action and the coaction
on a Hopf module, the equalizer property of $\iota\circ A$,
counitality of the comonoid $I$ and unitality of the monoid $A$, 
as well as the explicit form of $\beta_{X}$ in (\ref{eq:beta^A}).
\end{proof}

\begin{remark}
The equivalent conditions in Theorem \ref{thm:fthm} provide an alternative way
to define a Hopf monoid $A$ in a duoidal category $\mathcal M$. Although we
are not aware of any separating example, the resulting notion does not seem to
be equivalent to any of the definition of a Hopf bimonoid in \cite{T. Booker}
and the property that $(-)\circ A$ is a right Hopf monad on $(\mathcal M,
\bullet)$ (cf. Section \ref{sec:duo_cat}). In fact, it seems to be between
these notions: $(-)\circ A$ is a right Hopf monad provided that $\beta_{Q,M'}$
in \eqref{eq1.6} is an isomorphism, for any right $A$-module $Q$ and any
object $M'$ of $\mathcal M$. The conditions in Theorem \ref{thm:fthm} assert
less: they only say that $\beta_{Q,I}$ is an isomorphism for any right
$A$-module $Q$ and the $\circ$-monoidal unit $I$. The definition of a Hopf
bimonoid in \cite[Definition 9]{T. Booker} requires even less: only
$\beta_{A,I}$ to be an isomorphism.   
\end{remark}

In a braided monoidal category $\mathcal M$, there is only one monoidal unit
$I=J$. Both of its category of modules and comodules are isomorphic to
$\mathcal M$. Thus in this case the functor $H$ in Theorem \ref{thm:fthm} is
an isomorphism. In this sense, Theorem \ref{thm:fthm} extends the Fundamental
Theorem of Hopf Modules in a braided monoidal category with split
idempotents.

More generally, if in a duoidal category $\mathcal M$, the (co)unit
$\tau:I\to J$ is an isomorphism (of monoids and comonoids) then all categories
$\mathcal M^I$, $\mathcal M$ and $\mathcal M_J$ are isomorphic so that the 
functor $H$ in Theorem \ref{thm:fthm} is an isomorphism. Thus in this
case, if idempotent morphisms in $\mathcal M$ split, then all assumptions in
Theorem \ref{thm:fthm} hold.  

\subsection{The dual situation}

Recall from \cite[Section 6.1.2]{M. Aguiar}, that also the opposite of
a duoidal category is duoidal via the roles of the monoidal structures
interchanged. So we can dualize the results in the previous sections without
repeating the proofs. It leads to the following.

\begin{theorem}\label{thm:dual_ff}
Let $\mathcal M$ be a duoidal category in which idempotent morphisms split and
$
G:=\big(
\xymatrix{
\mathcal{M}_J \ar[r]^-{U_J}
& \mathcal{M}\ar[r]^-{(-)\bullet I}
& \mathcal{M}^I}\big)
$
is monadic. Let $A$ be a bimonoid in $\mathcal M$. If 
\begin{equation}\label{eq:varsigman}
\xymatrix{ \varsigma_Q:
Q\circ A \ar[r]^-{\rho\circ A}&
(Q\bullet A)\circ A \ar[r]^-\cong &
(Q\bullet A)\circ (J\bullet A)\ar[r]^-{\zeta}&
(Q\circ J)\bullet (A\circ A)\ar[r]^-{(Q\circ J)\bullet\mu}&
(Q\circ J)\bullet A}
\end{equation}
is an isomorphism for any $A$-comodule $(Q,\rho)$, then the comparison functor
$(-)\bullet A: \mathcal M_J\to \mathcal M_A^A$ is fully faithful.
\end{theorem}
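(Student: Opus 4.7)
The plan is to derive Theorem \ref{thm:dual_ff} as a direct dual of Theorem \ref{thm:ff} by applying the latter in the opposite duoidal category $\mathcal M^{op}$, whose existence is recalled at the start of this section. Under this self-duality the roles of the two monoidal products $\circ$ and $\bullet$ are interchanged; consequently the units $I$ and $J$ are swapped, monoids and comonoids exchange roles (so the bimonoid $(A,\mu,\eta,\Delta,\varepsilon)$ in $\mathcal M$ becomes a bimonoid in $\mathcal M^{op}$ with structure $(\Delta,\varepsilon,\mu,\eta)$), right $A$-modules correspond to right $A$-comodules, the category $\mathcal M^I$ corresponds to $\mathcal M_J$ and vice versa, the category $\mathcal M_A^A$ of Hopf modules is self-dual, comonadic functors correspond to monadic ones, fully faithfulness is self-dual, and the splitting of idempotents is self-dual.

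First I would check that the functor $G$ in the present hypothesis, reinterpreted in $\mathcal M^{op}$, becomes precisely the functor $H$ appearing in Theorem \ref{thm:ff}: both are composites of a forgetful functor from a category of (co)modules over a monoidal unit with tensoring by the other unit, and these two constituents are interchanged by the duality. Thus monadicity of $G$ in $\mathcal M$ coincides with comonadicity of $H$ in $\mathcal M^{op}$. Next I would verify that the natural transformation $\varsigma_Q$ in \eqref{eq:varsigman} is obtained from the natural transformation $\beta_Q$ in \eqref{eq:beta^A} precisely by the symbol swap $(\circ,\bullet,I,J,\mu,\Delta,\gamma,\rho)\leftrightarrow(\bullet,\circ,J,I,\Delta,\mu,\rho,\gamma)$ together with arrow reversal; this is immediate on inspection of the two displayed formulas. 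Hence $\varsigma_Q$ is an isomorphism for every $A$-comodule $Q$ in $\mathcal M$ if and only if the $\beta$ of Theorem \ref{thm:ff} is an isomorphism for every $A$-module $Q$ in $\mathcal M^{op}$.

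Applying Theorem \ref{thm:ff} in $\mathcal M^{op}$ then yields that the corresponding comparison functor in the opposite category is fully faithful. Translated back to $\mathcal M$, this comparison functor is exactly $(-)\bullet A:\mathcal M_J\to\mathcal M_A^A$, which is therefore fully faithful as well (fully faithfulness being preserved under passage to the opposite). The only genuine obstacle is the careful bookkeeping of all the dualities involved; no new computation is required once Theorem \ref{thm:ff} is in hand.
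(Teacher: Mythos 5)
Your proposal is correct and is precisely the paper's own argument: the paper states Theorem \ref{thm:dual_ff} without a separate proof, deriving it (together with Theorem \ref{thm:dual_fthm}) as the formal dual of Theorem \ref{thm:ff} by passing to the opposite duoidal category of \cite[Section 6.1.2]{M. Aguiar}, exactly as you describe. Your bookkeeping of the dualities (swapping $\circ\leftrightarrow\bullet$, $I\leftrightarrow J$, modules with comodules, monadicity with comonadicity, and matching $\varsigma$ with $\beta$) is accurate.
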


\begin{theorem}\label{thm:dual_fthm}
Let $\mathcal M$ be a duoidal category in which idempotent morphisms
split and
$
G:=\big(
\xymatrix{
\mathcal{M}_J \ar[r]^-{U_J}
& \mathcal{M}\ar[r]^-{(-)\bullet I}
& \mathcal{M}^I}\big)
$
is fully faithful. Then for any bimonoid $A$ in $\mathcal M$, the following
assertions are equivalent. 
\begin{itemize}
\item[{(i)}] The natural transformation $\varsigma$ 
in \eqref{eq:varsigman} is an isomorphism.
\item[{(ii)}] The comparison functor $(-)\bullet A: \mathcal M_J\to \mathcal
M_A^A$ is an equivalence.
\end{itemize}
\end{theorem}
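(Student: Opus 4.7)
The plan is to obtain Theorem \ref{thm:dual_fthm} as a direct application of Theorem \ref{thm:fthm} to the opposite duoidal category $\mathcal M^{op}$, in which the roles of the two monoidal products are interchanged: the first product becomes $\bullet$ with unit $J$ and the second becomes $\circ$ with unit $I$. This possibility was pointed out at the start of Section 3.5, and it is the reason the proof should not require any genuinely new argument.

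First I would set up the translation dictionary. Splitting of idempotent morphisms and fully faithfulness are self-dual, so they transfer between $\mathcal M$ and $\mathcal M^{op}$ without issue. Under the identifications $(\mathcal M^{op})^J \cong (\mathcal M_J)^{op}$ and $(\mathcal M^{op})_I \cong (\mathcal M^I)^{op}$, the functor $H$ of Theorem \ref{thm:fthm} applied in $\mathcal M^{op}$ is $G^{op}$, so our hypothesis that $G$ is fully faithful becomes exactly the hypothesis on $H$ required by Theorem \ref{thm:fthm} in $\mathcal M^{op}$. A bimonoid $A$ in $\mathcal M$ is automatically a bimonoid in $\mathcal M^{op}$, with $\Delta$, $\varepsilon$ playing the role of multiplication and unit for the first product $\bullet$ of $\mathcal M^{op}$, and $\mu$, $\eta$ playing the role of comultiplication and counit for the second product $\circ$. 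Inspection of the defining diagrams shows that an $A$-Hopf module in $\mathcal M^{op}$ carries exactly the same underlying data as an $A$-Hopf module in $\mathcal M$, so $(\mathcal M^{op})^A_A = (\mathcal M^A_A)^{op}$. The comparison functor $K$ for $\mathcal M^{op}$ is therefore $(-)\bullet A: \mathcal M_J \to \mathcal M^A_A$ viewed in the opposite direction, and it is an equivalence iff the functor of Theorem \ref{thm:dual_fthm} is, since equivalences are preserved by taking opposites.

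The step that requires genuine checking is matching the natural transformations: I would verify that the $\beta$ of Theorem \ref{thm:fthm} applied in $\mathcal M^{op}$ becomes precisely $\varsigma$ of \eqref{eq:varsigman} once translated back to $\mathcal M$. Walking through the composite \eqref{eq:beta^A} with the dualization swaps ($\circ \leftrightarrow \bullet$, $I \leftrightarrow J$, $\mu \leftrightarrow \Delta$, $\eta \leftrightarrow \varepsilon$, right actions $\leftrightarrow$ right coactions, and reversal of all arrows) yields precisely the composite in \eqref{eq:varsigman}; the only subtlety is that the interchange law $\zeta$ and the relevant unit constraints enter symmetrically, which is immediate from their definitions. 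I expect the main potential obstacle to be notational bookkeeping of these dualizations consistently, rather than any essential new mathematical content. Once the dictionary is in place, the equivalence (i) $\Leftrightarrow$ (ii) of Theorem \ref{thm:dual_fthm} follows directly from the equivalence (ii) $\Leftrightarrow$ (iii) of Theorem \ref{thm:fthm}; no dual notion of Galois extension needs to be formulated separately.
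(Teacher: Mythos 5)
Your proposal is correct and is exactly the paper's approach: the paper proves Theorem \ref{thm:dual_fthm} by invoking the duoidal structure on $\mathcal M^{op}$ (with the two monoidal products interchanged) and dualizing Theorem \ref{thm:fthm}, giving no further details. Your dictionary --- including the identifications $(\mathcal M^{op})^J\cong(\mathcal M_J)^{op}$, $(\mathcal M^{op})^A_A\cong(\mathcal M^A_A)^{op}$, the matching of $H$ in $\mathcal M^{op}$ with $G^{op}$, and of the dualized $\beta$ with $\varsigma$ --- checks out, and correctly explains why only the two conditions (ii)$\Leftrightarrow$(iii) of Theorem \ref{thm:fthm} survive in the dual statement.
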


\section{Applications and examples}

In this section we apply Theorem \ref{thm:fthm} and
Theorem \ref{thm:dual_fthm} to the duoidal categories in \cite[Example
6.17]{M. Aguiar} and in \cite[Example 6.18]{M. Aguiar}, respectively.
In particular, we prove that the assumptions of these theorems hold in the
respective examples.

\subsection{The occurrence of idempotent monads}

In the examples in the forthcoming sections, we will work with duoidal
categories in which the $\circ$-monoidal unit $I$ induces an idempotent
comonad $(-)\bullet I$, or the $\bullet$-monoidal unit $J$ induces an
idempotent monad $(-)\circ J$. Therefore in this section we collect some facts
about idempotent (co)monads for later application. As a more general
reference, we recommend \cite[vol. 2 page 196]{Bor}. 

\begin{proposition}\label{prop:idemp_ff}
For a  duoidal category $\mathcal M$ in which the comultiplication
$\delta:I \to I\bullet I$ on the $\circ$-monoidal unit $I$ is an isomorphism,
the following assertions are equivalent.
\begin{itemize}
\item[{(i)}] The functor
$
H:=\big(
\xymatrix{
\mathcal{M}^I \ar[r]^-{U^I}
& \mathcal{M}\ar[r]^-{(-)\circ J}
& \mathcal{M}_J}\big)
$
is fully faithful.
\item[{(ii)}] For any object $M$ of $\mathcal M$ such that
$M\bullet \tau$ is an isomorphism, also $(M\circ \tau)\bullet I$ is an
isomorphism (where $\tau$ is the (co)unit $I\to J$).
\end{itemize}

\end{proposition}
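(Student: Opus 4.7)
The plan is to exploit the idempotency of the comonad $(-)\bullet I$ that the hypothesis provides, and then reduce the full faithfulness of $H$ to a Yoneda argument.

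Since $\delta: I\to I\bullet I$ is an isomorphism, the comultiplication $M\bullet\delta$ of the comonad $(-)\bullet I$ on $\mathcal M$ is a natural isomorphism, so this comonad is idempotent. By the standard theory of idempotent comonads (see e.g.\ \cite[vol.~2 page 196]{Bor}), $U^I:\mathcal M^I\to\mathcal M$ is then fully faithful with essential image the full subcategory of those $M$ for which the counit $M\bullet\tau: M\bullet I\to M\bullet J\cong M$ is an iso, and $(-)\bullet I:\mathcal M\to\mathcal M^I$ is right adjoint (the coreflector) to $U^I$. This gives a natural iso
\[
\mathcal M^I(X,\, Y\bullet I)\;\cong\;\mathcal M(X,\, Y)\qquad(X\in\mathcal M^I,\ Y\in\mathcal M),
\]
under which any morphism $h:Y\to Y'$ in $\mathcal M$ corresponds to post-composition with $h\bullet I$.

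Next, for $X,Y\in\mathcal M^I$, combining this with the free/forgetful adjunction for $(-)\circ J:\mathcal M\to\mathcal M_J$, the hom-map
\[
H\colon\mathcal M^I(X,Y)\;\longrightarrow\;\mathcal M_J(X\circ J,\, Y\circ J)
\]
is identified with the map $\mathcal M(X,Y)\to\mathcal M(X,\, Y\circ J)$ given by post-composition with the unit $Y\circ\tau: Y\cong Y\circ I\to Y\circ J$. Feeding the morphism $Y\circ\tau$ through the coreflection iso above, this map is further identified with
\[
\mathcal M^I(X,\, Y\bullet I)\;\longrightarrow\;\mathcal M^I(X,\,(Y\circ J)\bullet I),
\]
namely post-composition with $(Y\circ\tau)\bullet I$. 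By the Yoneda lemma in $\mathcal M^I$, this is a bijection for every $X\in\mathcal M^I$ if and only if $(Y\circ\tau)\bullet I$ is an isomorphism (in $\mathcal M^I$, equivalently in $\mathcal M$).

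Putting the pieces together, $H$ is fully faithful iff $(Y\circ\tau)\bullet I$ is an iso for every $Y$ in the essential image of $U^I$, i.e.\ for every $Y\in\mathcal M$ with $Y\bullet\tau$ iso; this is precisely condition (ii). The main obstacle is the bookkeeping in the middle paragraph: one must verify carefully that $H$ applied to a morphism $f$ corresponds, through the two successive adjunction isomorphisms, to $(Y\circ\tau)\cdot f$. This uses only naturality of $\tau$ and bifunctoriality of $\circ$, but it is the only step requiring an explicit computation; everything else is formal nonsense about idempotent comonads and Yoneda.
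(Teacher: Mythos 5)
Your argument is correct and is essentially the paper's own proof: both use the idempotency of $(-)\bullet I$ to identify $\mathcal M^I$ with the full subcategory of objects on which $M\bullet\tau$ is invertible, and both reduce full faithfulness of $H$ to invertibility of the unit of $H\dashv G$, which up to unit constraints is exactly $(M\circ\tau)\bullet I$. The only cosmetic difference is that the paper cites the criterion ``a left adjoint is fully faithful iff the unit of its adjunction is an isomorphism'' from Borceux, whereas you re-derive it by a Yoneda argument threaded through the two component adjunctions.
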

\begin{proof} Since $\delta$ is an isomorphism, $(-)\bullet I:\mathcal
M\rightarrow \mathcal M$ is an idempotent comonad. 
So $\mathcal M^I$ is identified with the full subcategory of
$\mathcal M$ whose objects are those objects $M$ for which the counit 
$$
\xymatrix{
M\bullet I \ar[r]^-{M\bullet \tau}
&M\bullet J \ar[r]^-\cong 
&M}
$$
is an isomorphism, equivalently, $M\bullet \tau$ is an isomorphism.

By the dual form of \cite[vol. 1 page 114, Proposition 3.4.1]{Bor}, the
functor $H$ is fully faithful if and only if the unit  
$$
\nu_M=\big(
\xymatrix@C=15pt{
M\ar[r]^-\cong
&M\bullet J\ar[rr]^-{(M\bullet \tau)^{-1}} 
&&M\bullet I \ar[r]^-{\cong} 
& (M\circ I)\bullet I\ar[rr]^-{(M\circ\tau)\bullet I} 
&& (M\circ J)\bullet I} \big)
$$
of the adjunction 
$
H\dashv \big(
\xymatrix@C=20pt{
\mathcal M_J \ar[r]^-{\,U_J\,}
&\mathcal M\ar[r]^-{\, (-)\bullet I\,}
&\mathcal M^I}\big)
$ 
is an isomorphism, for any object $M$ in $\mathcal M^I$; that is, for any
object $M$ in $\mathcal M$ such that $M\bullet \tau$ is an isomorphism.  
This morphism $\nu_M$ is an isomorphism if and only if $(M\circ\tau)\bullet I$
is an isomorphism.  
\end{proof}

\subsection{The category of spans}

In this section we analyze in some detail the duoidal category
$\mathsf{span}(X)$ of spans over a given set $X$. This duoidal category was
introduced in \cite[Example 6.17]{M. Aguiar}, where it was called the {\em
``category of directed graphs with vertex set $X$''}. 

The objects of $\mathsf{span}(X)$ are triples $(M,t,s)$, where $M$ is a set
and $s$ and $t$ are maps $M\to X$, called the {\em source} and {\em target}
maps, respectively. The morphisms in $\mathsf{span}(X)$ are maps $f:M\to M'$
such that $s'.f=s$ and $t'.f=t$.  

For any spans $M$ and $N$ over $X$, one monoidal structure is given by the
pullback 
$$
M\circ N=\{(m,n)\in M\times N\ |\  s(m)=t(n)\}
\quad \textrm{and}\quad  I=X
$$
and the other monoidal structure is 
$$
M\bullet N=\{(m,n)\in M\times N\ |\ s(m)=s(n),t(m)=t(n)\}
\quad \textrm{and}\quad  J=X\times X.
$$
The interchange law takes the form   
$$
\zeta:(M\bullet N)\circ (M'\bullet N')\rightarrow (M\circ M')\bullet(N\circ
N'), \qquad (m,n,m',n')\mapsto (m,m',n,n').
$$
The $\circ$-monoidal unit $I$ is a comonoid with respect to $\bullet$ via the
comultiplication 
$$
\delta:I\rightarrow I\bullet I=\{(x,y)\in X\times X\ |\ x=y\}\cong I,\qquad
x\mapsto (x,x)\cong x.
$$
The $\bullet$-monoidal unit $J$ is a monoid with respect to $\circ$ via the
multiplication
$$\varpi:J\circ J=\{(x,y,x',y')\ |\ y=x'\}\rightarrow J,\qquad
(x,y=x',y')\mapsto (x,y').
$$
The counit of the comonoid $I$ and the unit of the monoid $J$ are both given
by 
$$
\tau:I\rightarrow J,\qquad x\mapsto (x,x).
$$
The monad $(-)\circ J$ and the comonad $(-)\bullet I$ on $\mathsf{span}(X)$
have the respective object maps
$$
M\circ J
\cong M\times X 
\quad \textrm{and}\quad 
M\bullet I
\cong \{m\in M\ |\ s(m)=t(m)\}.
$$

Let us turn to showing that $\mathsf{span}(X)$ satisfies all assumptions made
on a duoidal category in Theorem \ref{thm:fthm}. 

Since $(-)\bullet I$ is an idempotent comonad on $\mathsf{span}(X)$, its
category of comodules is isomorphic to the full subcategory of
$\mathsf{span}(X)$ whose objects are those spans $(Z,s,t)$ for which
the counit
$
\xymatrix@C=15pt{
Z\bullet I \ar[r]^-{Z\bullet\tau}
&Z\bullet J \ar[r]^-\cong 
&Z}
$
is an isomorphism. An equivalent description of $\mathsf{span}(X)^I$ is the
following.  

\begin{lemma} \label{lem:spanI}
The category $\mathsf{span}(X)^I$ of $I$-comodules is isomorphic to the slice
category $\mathsf{set}/X$ regarded as the full subcategory of
$\mathsf{span}(X)$ whose objects are those spans $(Z,s,t)$ for which $s=t$. 
\end{lemma}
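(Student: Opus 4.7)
The plan is to exploit the fact that the comonad $(-)\bullet I$ on $\mathsf{span}(X)$ is idempotent, as already observed in the proof of Proposition~3.1, and then unpack what its Eilenberg--Moore category looks like concretely in the category of spans.

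First, since $\delta:I\to I\bullet I$ is an isomorphism in $\mathsf{span}(X)$ (both sides are canonically $X$ as a span over $X$), the comonad $(-)\bullet I$ is idempotent. Hence the forgetful functor $\mathsf{span}(X)^I\to \mathsf{span}(X)$ is fully faithful, and its essential image consists of those spans $(Z,s,t)$ for which the counit $Z\bullet I\to Z$ (the composite of $Z\bullet\tau$ with the unit constraint $Z\bullet J\cong Z$) is an isomorphism.

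Next, I would compute this counit explicitly. By the definition of $\bullet$ in $\mathsf{span}(X)$, the set $Z\bullet I$ is the subset $\{z\in Z\mid s(z)=t(z)\}$ of $Z$ (with source and target restricted from $Z$), and the counit map $Z\bullet I\to Z$ is the inclusion. So the counit is an isomorphism precisely when every $z\in Z$ satisfies $s(z)=t(z)$, i.e.\ when $s=t$ as maps $Z\to X$.

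Finally, I would note that the full subcategory of $\mathsf{span}(X)$ on those spans with $s=t$ is manifestly isomorphic to $\mathsf{set}/X$: an object is a set $Z$ equipped with a single map $s:Z\to X$, and a morphism $(Z,s)\to(Z',s')$ is a map $f:Z\to Z'$ with $s'.f=s$, which is exactly the data of a morphism in $\mathsf{set}/X$. Composing this identification with the description of $\mathsf{span}(X)^I$ from the previous paragraph yields the claimed isomorphism $\mathsf{span}(X)^I\cong \mathsf{set}/X$. There is no real obstacle here --- the only thing to be careful about is that the idempotent-comonad identification of $\mathsf{span}(X)^I$ with a full subcategory of $\mathsf{span}(X)$ is used, rather than attempting to verify comodule axioms from scratch.
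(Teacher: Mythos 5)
Your proposal is correct and follows the same route as the paper: the text immediately preceding the lemma identifies $\mathsf{span}(X)^I$ with the full subcategory on which the counit of the idempotent comonad $(-)\bullet I$ is invertible, and the paper's proof then observes, exactly as you do, that this counit is the inclusion $\{z\in Z\mid s(z)=t(z)\}\hookrightarrow Z$, hence an isomorphism precisely when $s=t$. The identification of that full subcategory with $\mathsf{set}/X$ is the same evident step in both arguments.
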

\begin{proof}
For any span $Z$ over $X$, the map
$$
\xymatrix{
\{z\in Z\ |\ s(z)=t(z)\}\ \ar[r]^-\cong
&Z\bullet I \ar[r]^-{Z\bullet\tau}
&Z\bullet J \ar[r]^-\cong 
&Z}
$$
is just the inclusion map, what proves that $Z\bullet\tau$ is an isomorphism
if and only if the source and target maps on $Z$ are equal.
\end{proof}

\begin{proposition}\label{prop:tau_iso}
For any span $Z$ over $X$ with equal source and target maps, 
$(Z\circ \tau)\bullet I$ is an isomorphism.
\end{proposition}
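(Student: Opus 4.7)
The plan is to prove the claim by a direct bookkeeping calculation with source and target maps; no serious obstacle is anticipated. First I would unfold the two objects involved. Recall that on a pullback $M\circ N$ the target is inherited from $M$ and the source from $N$; that $I=X$ carries $s=t=\id_X$; and that $J=X\times X$ must carry $t=\pi_1$ and $s=\pi_2$ (forced by the requirement $(-)\bullet J\cong (-)$, so that the unique element of $J$ matching the pair $(t(m),s(m))$ is $(t(m),s(m))$). Using these, I find $Z\circ I\cong Z$, and $Z\circ J=\{(z,(x,y))\in Z\times(X\times X):s(z)=x\}\cong Z\times X$, where on the latter the source is $(z,y)\mapsto y$ and the target is $(z,y)\mapsto t(z)$. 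Under these identifications, $Z\circ \tau$ becomes the map $z\mapsto (z,t(z))$, using the hypothesis $s(z)=t(z)$.

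Then I would apply the functor $(-)\bullet I$, which sends a span $M$ to the subset $\{m\in M:s(m)=t(m)\}$. On $Z$ this is all of $Z$ by the assumption $s=t$, so $Z\bullet I\cong Z$. On $Z\times X$ (with the source/target described above) this is $\{(z,y):y=t(z)\}$, which projects bijectively onto $Z$ via $(z,y)\mapsto z$. A direct check, which is the only substantive point, shows that under these two identifications $(Z\circ\tau)\bullet I$ becomes the identity map of $Z$, hence an isomorphism. The whole argument is just a matter of keeping the source and target maps on $Z\circ J$ and $(Z\circ J)\bullet I$ consistent; once that bookkeeping is done, the conclusion is immediate.
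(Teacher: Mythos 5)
Your proposal is correct and follows essentially the same route as the paper: both unfold $(Z\circ I)\bullet I$ and $(Z\circ J)\bullet I$ explicitly as subsets cut out by the source/target conditions and observe that $(Z\circ\tau)\bullet I$ is, under these identifications, the inclusion of $\{z\in Z\ |\ s(z)=t(z)\}$ into $Z$, hence the identity when $s=t$. The bookkeeping of the source and target maps on $J$ and on $Z\circ J$ is done correctly.
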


\begin{proof}
For any span $Z$ over $X$, the map
$$
\xymatrix{
\{z\in Z\ |\ s(z)=t(z)\}\cong Z \bullet I \ar[r]^-\cong
&(Z\circ I)\bullet I\ar[r]^-{(Z\circ \tau)\bullet I}
&(Z\circ J)\bullet I \cong
Z}
$$
is again the inclusion map. Hence it is an isomorphism whenever the source and
target maps of $Z$ are equal.
\end{proof}

\begin{proposition}\label{prop:span_split}
Idempotent morphisms in $\mathsf{span}(X)^I$ split.
\end{proposition}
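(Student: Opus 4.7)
The plan is to reduce the claim to the familiar fact that idempotents split in $\mathsf{set}$. By Lemma~\ref{lem:spanI}, the category $\mathsf{span}(X)^I$ is isomorphic to the slice category $\mathsf{set}/X$, whose objects are maps $p\colon Z\to X$ and whose morphisms are commuting triangles. So it suffices to verify that idempotents split in $\mathsf{set}/X$.

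Given an idempotent $e\colon (Z,p)\to (Z,p)$ in $\mathsf{set}/X$ (that is, $e\circ e=e$ and $p\circ e=p$), I would form the set of fixed points
\[
F:=\{z\in Z\mid e(z)=z\},
\]
equipped with the restriction $p|_F\colon F\to X$ of $p$. Let $\iota\colon F\hookrightarrow Z$ be the inclusion and define $r\colon Z\to F$ by $r(z):=e(z)$, which lands in $F$ because $e(e(z))=e(z)$. One then checks the routine identities $r\circ \iota=\id_F$ and $\iota\circ r=e$, and verifies that both $\iota$ and $r$ are morphisms in $\mathsf{set}/X$: $p\circ \iota=p|_F$ by construction, while $p|_F\circ r=p\circ e=p$ by the assumption that $e$ lies over $X$. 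This exhibits $(F,p|_F)$ as a splitting of $e$ in $\mathsf{set}/X$.

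There is no real obstacle here; the only thing to be careful about is checking that the chosen splitting lifts along the isomorphism $\mathsf{span}(X)^I\cong\mathsf{set}/X$, which is automatic since both the inclusion and the retraction respect the structure map to $X$ — equivalently, they have equal source and target maps when viewed as spans, as required by Lemma~\ref{lem:spanI}. Alternatively, one could invoke the more conceptual fact that the forgetful functor $\mathsf{set}/X\to\mathsf{set}$ creates all (small) limits, hence in particular creates splittings of idempotents (the equalizer of $e$ and $\id_Z$), and appeal to the splitting of idempotents in $\mathsf{set}$.
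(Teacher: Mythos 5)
Your proof is correct and follows essentially the same route as the paper: the paper splits an idempotent $e$ in $\mathsf{span}(X)$ through its image $\mathsf{Im}(e)$ (which, for an idempotent, coincides with your fixed-point set $F$) and then observes that the splitting stays inside the full subcategory $\mathsf{span}(X)^I\cong\mathsf{set}/X$, exactly as you check. The only cosmetic difference is that you work directly in $\mathsf{set}/X$ while the paper works in $\mathsf{span}(X)$ first; the construction is the same.
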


\begin{proof}
For any idempotent morphism $e:M\to M$ in $\mathsf{span}(X)$, also
$\mathsf{Im}(e):=\{e(m)\ |\ m\in M\}$ is a span over $X$ via the restrictions
of the source and target maps of $M$. Hence the epimorphism
$M\to \mathsf{Im}(e)$, $m\mapsto e(m)$ and the monomorphism $\mathsf{Im}(e)\to
M$, $e(m)\mapsto e(m)$ provide a splitting of $e$ in $\mathsf{span}(X)$. This
proves that idempotent morphisms split in $\mathsf{span}(X)$ hence they also
split in the full subcategory $\mathsf{span}(X)^I\cong\mathsf{set}/X$,
cf. Lemma \ref{lem:spanI}.
\end{proof}
 
From Proposition \ref{prop:tau_iso} and Proposition \ref{prop:idemp_ff} we
conclude that the functor $H$ in Theorem \ref{thm:fthm} is fully faithful. So
taking into account also Proposition \ref{prop:span_split}, we see that
Theorem \ref{thm:fthm} holds in the duoidal category $\mathsf{span}(X)$. Our
next task is to identify its bimonoids for which the canonical comonad
morphism (\ref{eq:beta^A}) is a natural isomorphism.    

Recall from \cite[Example 6.43]{M. Aguiar} that a monoid in
$(\mathsf{span}(X), \circ, I)$ is precisely a small category with
object set $X$. 
So is a bimonoid in $\mathsf{span}(X)$ since the monoidal product $\bullet$ is
the categorical product.  
On any elements $a$ and $b$ of a (bi)monoid such that $s(a)=t(b)$, we denote
the multiplication by $\mu(a,b)=:a.b$.  

A right module over a bimonoid $A$ in $\mathsf{span}(X)$ is a span $Q$ over
$X$ equipped with a map of spans $Q\circ A=\{(q,a)\ |\ s(q)=t(a)\}\to Q$,
$(q,a)\mapsto q.a$ which is associative and unital in the evident sense. 
The natural transformation (\ref{eq:beta^A}) takes the explicit form
\begin{eqnarray}\label{eq:span_beta}
\beta_Q:(Q\bullet I)\circ A\cong\{(q,a)\ |\ s(q)=t(q)=t(a) \} 
&\to& Q\bullet A\cong\{(q,a)\ |\ s(q)=s(a),t(q)=t(a)\},\nonumber\\
(q,a) \ \, &\mapsto& (q. a,a).
\end{eqnarray}

\begin{proposition} \label{prop:grp}
Let $A$ be a bimonoid in the duoidal category 
$\mathsf{span}(X)$; that is a small category with object set $X$. 
The corresponding canonical comonad morphism (\ref{eq:span_beta})
is an isomorphism if and only if every element in the monoid $A$
is invertible; that is, $A$ is a groupoid.
\end{proposition}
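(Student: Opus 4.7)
The strategy is to split into the two implications, proving the ``if'' direction by writing down an explicit inverse of $\beta_Q$ and the ``only if'' direction by evaluating $\beta_Q$ on a carefully chosen non-regular right $A$-module.

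For $(\Leftarrow)$, assume $A$ is a groupoid and define $\beta_Q^{-1}(q,a) := (q \cdot a^{-1}, a)$. From $s(q)=s(a)$ and $t(q)=t(a)$, the composite $q \cdot a^{-1}$ is defined (since $s(q)=s(a)=t(a^{-1})$) and has source $s(a^{-1})=t(a)=t(q)$ equal to its own target, so it lies in $Q \bullet I$, and its source matches $t(a)$; hence the formula really lands in $(Q \bullet I) \circ A$. The identities $a \cdot a^{-1}=\mathsf{id}_{t(a)}$ and $a^{-1} \cdot a=\mathsf{id}_{s(a)}$, combined with associativity and unitality of the $A$-action, then show $\beta_Q \circ \beta_Q^{-1}$ and $\beta_Q^{-1} \circ \beta_Q$ are identity maps.

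For $(\Rightarrow)$, fix any $a: x_0 \to y_0$ in $A$ and exploit $\beta_Q$ being iso for a particular non-regular module. Take $Q$ to be the free right $A$-module on one generator $\ast$ with prescribed source $x_0$ and target $y_0$, realised concretely as
\[
Q := \{(\ast, b) \mid t(b)=x_0\}, \quad s(\ast,b)=s(b), \quad t(\ast,b)=y_0,
\]
with action $(\ast,b) \cdot c := (\ast, b \cdot c)$; the $A$-module axioms are immediate. Then $Q \bullet I$ is identified with $\mathrm{Hom}(y_0, x_0)$, and unpacking the formula for $\beta_Q$ on pairs whose second coordinate is $a$ identifies it with the map $\mathrm{Hom}(y_0, x_0) \to \mathrm{End}(x_0)$, $b \mapsto b \cdot a$. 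Bijectivity of this map places $\mathsf{id}_{x_0}$ in the image, so there exists $a^{\ast} \in \mathrm{Hom}(y_0, x_0)$ with $a^{\ast} \cdot a = \mathsf{id}_{x_0}$; this is a left inverse of $a$. Re-running the construction with $a^{\ast}: y_0 \to x_0$ in place of $a$ yields $a^{\ast\ast} \in \mathrm{Hom}(x_0, y_0)$ with $a^{\ast\ast} \cdot a^{\ast} = \mathsf{id}_{y_0}$, and a short associativity computation $a^{\ast\ast} = a^{\ast\ast} \cdot (a^{\ast} \cdot a) = (a^{\ast\ast} \cdot a^{\ast}) \cdot a = a$ gives $a \cdot a^{\ast} = a^{\ast\ast} \cdot a^{\ast} = \mathsf{id}_{y_0}$, so $a^\ast$ is a two-sided inverse.

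The main obstacle is spotting the right test module. The regular module $Q=A$ only produces the bijection $\mathrm{End}(y_0) \to \mathrm{Hom}(x_0, y_0)$, $b \mapsto b \cdot a$, in which $a$ itself appears but the identity $\mathsf{id}_{x_0}$ does not (unless $x_0=y_0$), so no left inverse of $a$ can be extracted directly; indeed, one can check this bijection holds in the poset category with two objects and a single non-identity arrow $x_0 \to y_0$, which is emphatically not a groupoid. It is precisely the fact that the free module $Q$ has generator with distinct source and target $x_0 \neq y_0$, matching those of $a$, that forces $\mathsf{id}_{x_0}$ to land in the codomain of the bijection produced by $\beta_Q$, thereby exhibiting the left inverse.
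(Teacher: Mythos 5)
Your proof is correct. The ``if'' direction coincides with the paper's: the same explicit inverse $(q,a)\mapsto (q\cdot a^{-1},a)$, with the same routine check that it lands in $(Q\bullet I)\circ A$. For the ``only if'' direction you take a genuinely different, and in my view cleaner, route. The paper first uses bijectivity of $\beta_A$ to produce an operation $b^a$ satisfying $b^a\cdot a=b$ and $(b\cdot a)^a=b$, establishes the left-multiplication compatibility $c\cdot b^a=(c\cdot b)^a$, deduces that all endomorphisms are invertible, then that any $a\in A_{x,y}$ is invertible whenever $A_{y,x}\neq\emptyset$, and finally builds an ad hoc module (one or two arrows into $x$ per object, depending on whether $A_{x,-}$ is empty) on which $\beta_Q$ fails to be injective unless the hom-sets are symmetric in emptiness. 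You instead attach to each arrow $a\colon x_0\to y_0$ the free module on one generator with source $x_0$ and target $y_0$; since $\beta_Q$ preserves the second coordinate, it restricts to the fibre over $a$, where it becomes $\mathrm{Hom}(y_0,x_0)\to\mathrm{End}(x_0)$, $b\mapsto b\cdot a$, and surjectivity alone hands you a left inverse $a^\ast$. The standard computation $a^{\ast\ast}=a^{\ast\ast}\cdot(a^\ast\cdot a)=(a^{\ast\ast}\cdot a^\ast)\cdot a=a$ then upgrades every left inverse to a two-sided one. Your argument thus needs only surjectivity of the components of $\beta$ at free modules, whereas the paper's also uses injectivity at a non-free module; both approaches genuinely require test modules beyond the regular one, as your poset example (two objects, one non-identity arrow, where $\beta_A$ is a bijection although $A$ is not a groupoid) correctly demonstrates --- I checked that example and it is accurate.
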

\begin{proof} If every element in $A$ is invertible, then we construct the
inverse of (\ref{eq:span_beta}) as  
$$
Q\bullet A\rightarrow (Q\bullet I)\circ A,\qquad 
(q,a)\mapsto (q.a^{-1},a).
$$

Conversely, assume that (\ref{eq:span_beta}) is a natural isomorphism. Then it
is an isomorphism, in particular, for $Q=A$. 
Taking into account the explicit form of (\ref{eq:span_beta}), the inverse of
$\beta_A$ can be written as $\beta_A^{-1}(b,a):=(b^a,a)$, in terms of some
function $b^a$ of $a$ and $b$ (such that $s(a)=s(b)$ and $t(a)=t(b)$),
satisfying the conditions 
\begin{equation}\label{1}
\begin{array}{ll}
b^a.a=b,&
\qquad \textrm{for}\   a,b\in A\ \textrm{such that}\ s(a)=s(b),\ t(a)=t(b),\\
(b.a)^a=b,&
\qquad \textrm{for}\   a,b\in A\ \textrm{such that}\ t(b)=s(b)=t(a).
\end{array}
\end{equation}

Introducing the notation $M_{x,y}=\{m\in M |\ s(m)=x,t(m)=y\}$, for any span
$M$ over $X$, $\beta_A$ induces bijections 
$$
(\beta_A)_{x,y}:((A\bullet I)\circ A)_{x,y}\rightarrow (A\bullet
A)_{x,y},\qquad 
(b,a)\mapsto \ \ (b.a,a).
$$
Any element $c\in A$ such that $s(c)=t(c)=y$, induces two maps 
$$
\begin{array}{ll}
\varphi_c:((A\bullet I)\circ A)_{x,y}\rightarrow ((A\bullet I)\circ A)_{x,y},
\qquad 
&(b,a)\mapsto (c.b,a)\\
\psi_c:(A\bullet A)_{x,y}\rightarrow (A\bullet A)_{x,y},\qquad
&(b,a)\mapsto (c.b,a)
\end{array}
$$
rendering  commutative the diagram
$$
\xymatrix{ 
((A\bullet I)\circ A)_{x,y}\ar[rr]^-{(\beta_A)_{x,y}}\ar[d]_-{\varphi_c}
&& (A\bullet A)_{x,y}\ar[d]^-{\psi_c} \\
((A\bullet I)\circ A)_{x,y}\ar[rr]_-{(\beta_A)_{x,y}}  && (A\bullet A)_{x,y}.}
$$
Equivalently, inverting the horizontal arrows, 
\begin{equation}\label{3}
c.b^a=(c.b)^a,
\qquad \textrm{for}\   a,b,c\in A\ \textrm{such that}\ s(a)=s(b),\ 
t(a)=t(b)=s(c)=t(c).
\end{equation}

For any $x\in X$, denote by $1_x$ the unit morphism at $x$; i.e. the image of
$x$ under the unit $\eta:I=X\to A$. 
For any $c\in A$ such that $s(c)=t(c)=x$, it follows by the first condition in 
(\ref{1}) that $(1_x)^c.c=1_x$. By by (\ref{3}) and the second condition in
(\ref{1}), also $c.(1_x)^c=c^c=1_x$. So $c$ is invertible with the inverse
$(1_x)^c$. 
 
Next we show any morphism from $x$ to $y$ --- i.e. any $a\in A$ such that
$s(a)=x$ and $t(a)=y$ --- is invertible whenever the set $A_{y,x}$ is
non-empty; i.e. there is at least one arrow from $y$ to $x$. Indeed, take  
$a\in A_{x,y}$ and $b\in A_{y,x}$.
Then $a.b\in A_{y,y}$ and $b.a\in A_{x,x}$ are invertible by the previous
paragraph; i.e. $(b.a)^{-1}.b.a=1_x$ and $a.b.(a.b)^{-1}=1_y$. This implies
that $a$ is invertible with the inverse $(b.a)^{-1}.b=b.(a.b)^{-1}$. 

Thus the proof is completed if we show that whenever $A_{x,y}$ is a non-empty
set then also $A_{y,x}$ must be non-empty. Equivalently, if we show that
whenever $A_{x,y}$ is an empty set then also $A_{y,x}$ must be empty. 
Assuming that $A_{x,y}=\emptyset$ for some $x\neq y\in X$, below we construct
an appropriate $A$-module $Q$ such that the corresponding map $\beta_Q$
in (\ref{eq:span_beta}) has a non-trivial kernel unless $A_{y,x}=\emptyset$.   

Fix $x,y\in X$ such that $A_{x,y}=\emptyset$.
Take $Q$ to be the span consisting of two arrows from $u$ to $x$ if $A_{x,u}$
is non-empty and one arrow from $u$ to $x$ if $A_{x,u}$ is empty. That is,
$$
Q:=\{u\stackrel{q_u}\longrightarrow x, u\stackrel{p_u}\longrightarrow x\ |
\ u\in X, A_{x,u}\neq\emptyset\}\cup 
\{u\stackrel{r_u}\longrightarrow x\ |\ 
u\in X, A_{x,u}=\emptyset\}.
$$
Note that if $A_{x,s(a)}$ is non-empty for some $a\in A$, then also
$A_{x,t(a)}$ is non-empty (an element is obtained by composing with $a$).
An associative and unital $A$-action on $Q$ is defined by the prescriptions
$$
\begin{array}{lll}
q_{t(a)}. a=q_{s(a)} \quad \textrm{and} \quad 
&p_{t(a)}. a=p_{s(a)}\quad
&\textrm{if}\ A_{x,s(a)}\neq \emptyset\ \textrm{and}\
A_{x,t(a)}\neq \emptyset\\
q_{t(a)}. a=r_{s(a)} \quad \textrm{and} \quad 
&p_{t(a)}. a=r_{s(a)}\quad
&\textrm{if}\  A_{x,s(a)}=\emptyset \ \textrm{and}\ A_{x,t(a)}\neq \emptyset\\
r_{t(a)}. a=r_{s(a)}
&
&\textrm{if}\  A_{x,s(a)}=\emptyset \ \textrm{and}\  A_{x,t(a)}=\emptyset.
\end{array}
$$
The set $A_{x,x}$ is non-empty since it contains at least the unit arrow
$1_x$. Hence there are two different elements $p_x$ and $q_x$ in $Q$.
If there is at least one element $b$ in $A_{y,x}$, then it obeys
$$
\beta_Q(p_x,b)=(p_x. b, b)=(r_y, b)=(q_x. b, b)=\beta_Q(q_x,b).
$$
Thus $\beta_Q$ has a non-trivial kernel whenever $A_{y,x}$ is non-empty; which
contradicts the assumption that $\beta_Q$ is an isomorphism.
So we proved that $A_{y,x}$ is an empty set whenever $A_{x,y}$ is empty.
\end{proof}

Owing to the fact that the monoidal product $\bullet$ is the categorical
product, a comodule for a comonoid $A$ in $\mathsf{span}(X)$ can be
described as a span $P$ over $X$ equipped with a map of spans $c:P\to A$. The
corresponding coaction sends $p\in P$ to $(p,c(p))$. A morphism of
$A$-comodules is a map of spans $f:P\to P'$ such that $c^\prime.f=c$. 

A Hopf module over a bimonoid $A$ in $\mathsf{span}(X)$ --- that is, over a
small category $A$ with object set $X$ --- is an $A$-module $Q$ equipped
with a morphism of $A$-modules $c:Q\to A$. A morphism of $A$-Hopf
modules is a map of $A$-modules $f:Q\to Q'$  such that $c^\prime.f=c$.  

From Theorem \ref{thm:fthm} and Proposition \ref{prop:grp}, we obtain the
following.  

\begin{corollary}
For a small category $A$ with object set $X$, the following assertions are
equivalent. 
\begin{itemize}
\item[{(i)}] $A$ is a groupoid.
\item[{(ii)}] The natural transformation in (\ref{eq:span_beta}) is an
isomorphism. 
\item[{(iii)}] The canonical comparison functor --- from the slice category
$\mathsf{set}/X$ to the category of $A$-Hopf modules  --- is an equivalence. 
\end{itemize}
\end{corollary}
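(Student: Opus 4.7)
The plan is to assemble this corollary directly from the results already in place, with no new structural work required. The three hypotheses we need in order to invoke Theorem \ref{thm:fthm} for $\mathcal M=\mathsf{span}(X)$ have all been checked in the preceding paragraphs: idempotents split in $\mathsf{span}(X)$ by Proposition \ref{prop:span_split}, and the functor $H:\mathsf{span}(X)^I\to \mathsf{span}(X)_J$ is fully faithful by Proposition \ref{prop:tau_iso} combined with Proposition \ref{prop:idemp_ff}, using that the comultiplication $\delta:I\to I\bullet I$ is an isomorphism in $\mathsf{span}(X)$. So Theorem \ref{thm:fthm} applies verbatim to any bimonoid in $\mathsf{span}(X)$.

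With this in hand, I would organize the proof as follows. First, recall from \cite[Example 6.43]{M. Aguiar} and the discussion following it that a bimonoid in $\mathsf{span}(X)$ is precisely a small category with object set $X$. Second, apply Theorem \ref{thm:fthm} to $A$: the theorem asserts the equivalence of the Galois property, the natural isomorphism property of $\beta$ as in \eqref{eq:beta^A}, and the equivalence property of the comparison functor $K:\mathsf{span}(X)^I\to \mathsf{span}(X)_A^A$. By Lemma \ref{lem:spanI}, the domain $\mathsf{span}(X)^I$ is (isomorphic to) the slice category $\mathsf{set}/X$, so condition (iii) of the corollary is exactly the statement that $K$ is an equivalence. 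The explicit form \eqref{eq:span_beta} of $\beta$ for $\mathsf{span}(X)$ is condition (ii). Third, apply Proposition \ref{prop:grp}, which characterises the isomorphism property of $\beta$ as in \eqref{eq:span_beta} as the requirement that $A$ be a groupoid; this gives the equivalence of (i) and (ii).

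Chaining these yields $(\text{i})\Leftrightarrow (\text{ii})\Leftrightarrow (\text{iii})$. There is essentially no obstacle: the heavy lifting (both the abstract Fundamental Theorem and the concrete ``invertibility'' analysis of $\beta$ for spans) has already been done, and the corollary is merely a packaging statement. The only minor points to verify while writing the proof are the identification $\mathcal{M}^I\cong \mathsf{set}/X$ used to rephrase (iii), and the observation that the comparison functor appearing in Theorem \ref{thm:fthm}, under this identification, really is the canonical one sending $(Z,s=t)$ to the $A$-Hopf module $Z\circ A$ with the obvious structure maps. Both are immediate from the constructions in Section \ref{sec:rel_Hopf} and Lemma \ref{lem:spanI}, so no additional argument is needed.
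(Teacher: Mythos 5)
Your proposal is correct and matches the paper's own (implicit) argument exactly: the paper derives this corollary by combining Theorem \ref{thm:fthm} (whose hypotheses are verified for $\mathsf{span}(X)$ via Propositions \ref{prop:tau_iso}, \ref{prop:idemp_ff} and \ref{prop:span_split}) with Proposition \ref{prop:grp}, using Lemma \ref{lem:spanI} to identify $\mathsf{span}(X)^I$ with $\mathsf{set}/X$. No gaps.
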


\subsection{The category of bimodules}

Let $k$ be a commutative, associative and unital ring. Throughout the
section, the unadorned symbol $\otimes$ denotes the $k$-module tensor
product. 
Let $R$ be a commutative, associative and unital $k$-algebra. Its
multiplication will be denoted by juxtaposition on the elements. Denote
by $\mathsf{bim}(R)$ the category of $R$-bimodules. In \cite[ Example
6.18]{M. Aguiar}, it was shown to carry a duoidal structure as follows. For
any $R$-bimodules $M$ and $N$, one of the monoidal structures is provided by
the usual $R$-bimodule tensor product
$$
M \bullet N :=
M\otimes N /\{m\cdot r\otimes n-m\otimes r\cdot n\ |\ r\in R\}
\quad \textrm{and}\quad J=R.
$$
The other monoidal structure is given by an $R\otimes R$-bimodule tensor
product 
$$
M\circ N:=
M\otimes N/\{r\cdot m\cdot r'\otimes n - m\otimes r\cdot n\cdot r'\  |\
r,r'\in R\} \quad \textrm{and}\quad I=R\otimes R.
$$
The interchange law has the form
$$
\zeta:(M\bullet N)\circ (M'\bullet N')\rightarrow 
(M\circ M')\bullet(N\circ N'),\qquad
(m\bullet n)\circ (m'\bullet n')\mapsto 
(m\circ m')\bullet(n\circ n').
$$
The $\circ$-monoidal unit $I$ is a comonoid with respect to $\bullet$ via the
comultiplication 
$$
\delta:I\rightarrow I\bullet I,\qquad  
x\otimes y\mapsto (x\otimes 1_R)\bullet(1_R\otimes y).
$$The $\bullet$-monoidal unit $J$ is a monoid with respect to $\circ$ via the
multiplication 
$$
\varpi:J\circ J\rightarrow J,\qquad 
a\circ b\mapsto ab.
$$
The counit of the comonoid $I$, and the unit of the monoid $J$ are both given
by 
$$
\tau:I\rightarrow J,\qquad  
a\otimes b\mapsto ab.
$$
The comonad $(-)\bullet I$ and the monad $(-)\circ J$ on $\mathsf{bim}(R)$
have the respective object maps
$$
M\bullet I\cong M \otimes R
\quad \textrm{and}\quad
M\circ J\cong M/[M,R].
$$
The isomorphism 
$$
M\circ J\cong M\otimes R / \{ x\cdot m\cdot y \otimes r-m\otimes xry\
|\ x,y\in R \}\cong M/[M,R]=M/\{m\cdot r-r\cdot m\ |\ r\in R\}
$$ 
is established by the mutually inverse maps 
$$
\begin{array}{lrl}
M\circ J\rightarrow M/[M,R], \qquad
&m\circ r=m\cdot r\circ 1_R=r\cdot m\circ 1_R
\mapsto& [r\cdot m]=[m\cdot r]\quad \textrm{and}\\
M/[M,R] \rightarrow  M\circ J,\qquad
&{[m]}\mapsto & m\circ 1_R.
\end{array}
$$
In particular, $J\circ J\cong R/[R,R]\cong R=J$, via the isomorphism provided
by the multiplication $\varpi$ and its inverse $\varpi^{-1}:r\mapsto r\circ 
1_R=1_R\circ r$. 
Thus the monad $(-)\circ J$ on $\mathsf{bim}(R)$ is idempotent. So the
category $\mathsf{bim}(R)_J$ of its modules is isomorphic to the full
subcategory of $\mathsf{bim}(R)$ whose objects are those $R$-bimodules $M$ for
which the unit 
\begin{equation}\label{eq:unit_J}
\xymatrix{
M\ar[r]^-\cong
&M\circ I \ar[r]^-{M\circ \tau}
&M\circ J}
\end{equation}
is an isomorphism. 
Another equivalent description of $J$-modules can be given as follows.

\begin{lemma}\label{lem:diag}
The category $\mathsf{bim}(R)_J$ of $J$-modules is isomorphic to the category 
$\mathsf{mod}(R)$ of $R$-modules --- regarded as the full subcategory of
$\mathsf{bim}(R)$ on whose objects the left and right $R$-actions coincide. 
\end{lemma}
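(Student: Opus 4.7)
The plan is to exploit the fact, established just before the lemma, that the monad $(-)\circ J$ on $\mathsf{bim}(R)$ is idempotent (since $\varpi:J\circ J\to J$ is an isomorphism, with explicit inverse $r\mapsto r\circ 1_R$). For an idempotent monad, the category of algebras is isomorphic to the full subcategory of the underlying category consisting of those objects at which the unit is an isomorphism. So the first step is to identify this unit explicitly.

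Next, I would spell out the unit of $(-)\circ J$ at an $R$-bimodule $M$, namely
$$\eta_M=\bigl(\xymatrix@C=15pt{M\ar[r]^-\cong & M\circ I\ar[r]^-{M\circ \tau}& M\circ J}\bigr).$$
Using the description $M\circ J\cong M/[M,R]$ recalled in the preceding paragraph, together with the computation that sends $m\circ 1_R$ to $[m]$, one sees that $\eta_M$ is precisely the canonical projection $m\mapsto [m]$. Hence $\eta_M$ is an isomorphism if and only if the submodule $[M,R]=\{m\cdot r-r\cdot m\mid m\in M,r\in R\}$ vanishes, which holds if and only if the left and right $R$-actions on $M$ coincide.

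Combining these two observations yields the isomorphism. Explicitly, I would define the comparison functor $\mathsf{mod}(R)\to \mathsf{bim}(R)_J$ by sending an $R$-module $M$ (viewed as a symmetric $R$-bimodule) to $M$ equipped with the $J$-action given by the inverse of $\eta_M$, namely the map $M\circ J\cong M/[M,R]=M$ induced by $m\circ r\mapsto r\cdot m$; and the inverse functor as the forgetful one, which lands in $\mathsf{mod}(R)$ because for any $J$-module the $J$-action forces $\eta$ to be split, and hence (by idempotency) invertible. The only verification worth spelling out is that this action is indeed associative and unital, which follows from unitality of the monoid $J$ and the explicit form of the multiplication $\varpi$.

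The argument is essentially routine once the idempotency of $(-)\circ J$ has been recorded; the only mildly delicate point is to track the isomorphism $M\circ J\cong M/[M,R]$ carefully so as to confirm that the unit is the quotient map, since this is what makes the subcategory condition ``$\eta_M$ is iso'' coincide with the condition ``left and right $R$-actions agree''. No serious obstacle is expected.
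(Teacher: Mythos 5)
Your proposal is correct and follows essentially the same route as the paper: both identify the unit of the idempotent monad $(-)\circ J$ at $M$ with the canonical projection $M\to M/[M,R]$ and observe that it is invertible exactly when $[M,R]=0$, i.e.\ when the two $R$-actions agree. The extra detail you give on the comparison functors is a harmless elaboration of the same argument.
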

\begin{proof}
For any $R$-bimodule $M$, the map $M\to M\circ J\cong M/[M,R]$ in
(\ref{eq:unit_J}) is the canonical projection. It is an isomorphism if and
only if $[M,R]=0$; that is, the left and right $R$-actions on $M$ coincide.
\end{proof}

In what follows, we check that the assumptions of Theorem \ref{thm:dual_fthm}
hold in $\mathsf{bim}(R)$.   

\begin{proposition}\label{prop:bim_tau_iso}
If $M\circ \tau$ is an isomorphism for some $R$-bimodule $M$, then
$(M\bullet \tau)\circ J$ is an isomorphism too.
\end{proposition}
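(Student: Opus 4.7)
The plan is to prove the claim by a direct calculation in $\mathsf{bim}(R)$, unfolding the explicit formulas for the functors $(-)\circ J$ and $(-)\bullet I$ already recalled in the text.

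First I would translate the hypothesis. From the description preceding Lemma \ref{lem:diag}, the composite $M\cong M\circ I \xrightarrow{M\circ\tau} M\circ J \cong M/[M,R]$ is the canonical projection. Hence $M\circ\tau$ is an isomorphism if and only if $[M,R]=0$, i.e.\ the left and right $R$-actions on $M$ coincide, making $M$ a symmetric $R$-bimodule (equivalently, an object of $\mathsf{mod}(R)\cong\mathsf{bim}(R)_J$).

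Next I would identify the morphism $(M\bullet\tau)\circ J$ in coordinates. Using $M\bullet I \cong M\otimes R$ (a representative $m\otimes(a\otimes b)$ is rewritten via the bimodule relation as $ma\otimes(1_R\otimes b)\leftrightarrow ma\otimes b$) and $M\bullet J \cong M$ via the right action, the map $M\bullet\tau$ corresponds to the right action $\mu_M\colon M\otimes R \to M$, $m\otimes b\mapsto m\cdot b$, where $M\otimes R$ carries the bimodule structure $r(m\otimes b)=rm\otimes b$, $(m\otimes b)r = m\otimes br$. Applying $(-)\circ J$ then yields
\[
(M\otimes R)/[M\otimes R,R] \;\longrightarrow\; M/[M,R]\cong M, \qquad [m\otimes b]\longmapsto m\cdot b.
\]

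Finally I would exhibit a two-sided inverse $m\mapsto [m\otimes 1]$. One composite is immediate since $m\cdot 1 = m$. For the other, the defining relation $m\otimes br \sim rm\otimes b$ in $(M\otimes R)/[M\otimes R,R]$, specialised to $r=b$ and $b=1$, gives $m\otimes b \sim bm\otimes 1$, and the hypothesis $[M,R]=0$ forces $bm = mb$, so $[m\otimes b] = [mb\otimes 1]$, as required. There is no serious obstacle: the entire content is that the hypothesis $[M,R]=0$ precisely collapses the quotient $(M\otimes R)/[M\otimes R,R]$ onto $M$ via the action map, which is exactly what $(M\bullet\tau)\circ J$ becomes.
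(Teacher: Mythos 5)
Your proof is correct and follows essentially the same route as the paper's: both unfold the explicit identifications $M\bullet I\cong M\otimes R$ and $(-)\circ J\cong (-)/[-,R]$ and reduce everything to the relation $m\otimes br\sim rm\otimes b$ in $(M\otimes R)/[M\otimes R,R]$. The only (cosmetic) difference is organizational: the paper first notes that $(M\otimes R)/[M\otimes R,R]\cong M$ holds \emph{unconditionally} (via $[m\otimes r]\mapsto r\cdot m$), so that the whole composite becomes the canonical projection $M\to M/[M,R]$ and the converse implication drops out for free, whereas you construct the two-sided inverse $m\mapsto[m\otimes 1_R]$ directly, invoking the hypothesis $[M,R]=0$ in the verification.
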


\begin{proof}
For any $R$-bimodule $M$, the map $(M\bullet \tau)\circ J$ is an isomorphism
if and only if 
$$
\xymatrix@C=12pt{
M\ar[r]^-\cong
&M\otimes R/[M\otimes R,R]\ar[r]^-\cong
&(M\bullet I)\circ J\ar[rr]^-{(M\bullet \tau)\circ J}
&&(M\bullet J)\circ J\ar[r]^-\cong 
&M\circ J\ar[r]^-\cong
&M/[M,R]}
$$
is an isomorphism. The first isomorphism is established by the mutually
inverse maps $M\to M\otimes R/[M\otimes R,R]$, $m\mapsto [m\otimes 1_R]$ and
$M\otimes R/[M\otimes R,R] \to M$, $[m\otimes r]\mapsto r\cdot m$.
The displayed map is the canonical projection. So it is an isomorphism if and
only if $[M,R]=0$. Equivalently, by Lemma \ref{lem:diag}, if and
only if $M\circ \tau$ is an isomorphism.
\end{proof}

Idempotent morphisms in any module category split (through the image). We
conclude by Proposition \ref{prop:bim_tau_iso} and the dual form of
Proposition \ref{prop:idemp_ff} that the functor $G$ in Theorem
\ref{thm:dual_fthm} is fully faithful. So we can apply Theorem
\ref{thm:dual_fthm} to the duoidal category $\mathsf{bim}(R)$. Our next task
is to identify those bimonoids $A$ in $\mathsf{bim}(R)$ for which the
canonical monad morphism $\varsigma$ in \eqref{eq:varsigman} is a natural
isomorphism.    

Recall from \cite[Example 6.44]{M. Aguiar} that a monoid $A$ in
$\mathsf{bim}(R)$ can be described equivalently as a $k$-algebra $A$ equipped
with algebra homomorphisms $s$ and $t$ from $R$ to the center of $A$. (The
algebra homomorphisms $s$ and $t$ are related to the unit $\eta: I=
R\otimes R\to A$ by $s=\eta(-\otimes 1_R)$ and $t=\eta(1_R\otimes -)$.) The
left and right $R$-actions on $A$ come out as 
\begin{equation}\label{eq:center}
r\cdot a=s(r)a=as(r)
\quad\textrm{and}\quad 
a\cdot r=t(r)a=at(r).
\end{equation}

A comonoid in $\mathsf{bim}(R)$ is the usual notion of $R$-coring; that is,
an $R$-bimodule $A$ equipped with a coassociative comultiplication
$\Delta:A\to A\bullet A$ with counit $\varepsilon:A\to R$, such that both the
comultiplication and the counit are $R$-bimodule maps. 
For the comultiplication $A\to A\bullet A$ we use a Sweedler type
index notation $a\mapsto a_1\bullet a_2$, where implicit summation is
understood.

Finally, a bimonoid $A$ in $\mathsf{bim}(R)$ is precisely an
$R$-bialgebroid --- called a ``$\times_R$-bialgebra'' in \cite{M. Takeuchi}
--- whose unit maps $s$ and $t$ land in the center of $A$. Explicitly,
it obeys the following axioms (see \cite[Appendix A1]{C. Ravenel} for the case
when also $A$ is a commutative algebra). 
\begin{itemize}
\item $A$ is a $k$-algebra equipped with algebra homomorphisms $s$ and $t$
from $R$ to the  center of $A$,
\item the $R$-bimodule (\ref{eq:center}) carries an $R$-coring structure,
\item the comultiplication $\Delta:A\to A\bullet A$ and the counit
$\varepsilon:A\to R$ are algebra homomorphisms. 
\end{itemize}

A right comodule over a bimonoid $A$ in $\mathsf{bim}(R)$ is an $R$-bimodule
$Q$ equipped with a coassociative and counital coaction $Q\to Q\bullet A$
which is a morphism of $R$-bimodules. For the coaction $Q\to Q\bullet A$ we
use a Sweedler type index notation $q\mapsto q_0\bullet q_1$, where implicit
summation is understood. For any right $A$-comodule $Q$, the natural
transformation $\varsigma$ in \eqref{eq:varsigman} takes the following
explicit form.  
\begin{equation}\label{eq:varsigma}
\varsigma_Q:Q\circ A \to (Q\circ J)\bullet A\cong Q/[Q,R]\bullet A,\qquad 
q\circ a \mapsto [q_0]\bullet q_1a.
\end{equation}
Recall from \cite{G. Bohm} (and the references therein, in
particular \cite{C. Ravenel} in the commutative case) that an
$R$-bialgebroid $A$ as above is said to be a Hopf algebroid --- with left
bialgebroid structure as above and right bialgebroid structure obtained by
interchanging the roles of $s$ and $t$ --- if in addition there exists a
$k$-module map $S:A\to A$ --- called the {\em antipode} --- such that, for all
$a\in A$ and $r\in R$,   
\begin{equation}\label{eq:antipode_ax}
\begin{array}{ll}
S(as(r))=t(r)S(a),\qquad 
&S(t(r)a)=S(a)s(r),\\
a_1S(a_2)=s(\varepsilon(a)),\qquad
&S(a_1)a_2=t(\varepsilon(a)).
\end{array}
\end{equation}

\begin{proposition}\label{prop:hgd}
For a bimonoid $A$ in $\mathsf{bim}(R)$ --- that is, for an $R$-bialgebroid
$A$ such that the images of the unital maps $s$ and $t$ are central in $A$ ---
the natural transformation (\ref{eq:varsigma}) is an isomorphism if and only
if $A$ is a Hopf algebroid. 
\end{proposition}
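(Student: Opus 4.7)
The plan is to trade between the Hopf algebroid antipode and an explicit inverse for $\varsigma_Q$, with centrality of $s$ and $t$ serving throughout as the device that lets one move relations across the $\bullet$ and $\circ$ tensor products.

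For $(\Leftarrow)$, suppose $A$ admits an antipode $S$ obeying \eqref{eq:antipode_ax}. For any right $A$-comodule $(Q, q\mapsto q_0\bullet q_1)$, I would set
$$\varsigma_Q^{-1}\colon (Q/[Q,R])\bullet A \longrightarrow Q\circ A,\qquad [q]\bullet a\longmapsto q_0\circ S(q_1)\,a,$$
mirroring the classical Hopf-algebra formula. The work then breaks into three short verifications: well-definedness across the quotients $[Q,R]$, the $R$-balancing of $\bullet$, and the $R\otimes R$-balancing of $\circ$ (where the $R$-bimodule equivariance of $\rho$, the antipode identities $S(as(r))=t(r)S(a)$ and $S(t(r)a)=S(a)s(r)$, and centrality of $s,t$ combine to make each relation pass); $\varsigma_Q^{-1}\varsigma_Q=\id$, which by coassociativity and counitality reduces to $S(q_{11})q_{12}=t(\varepsilon(q_1))$; and $\varsigma_Q\varsigma_Q^{-1}=\id$, which reduces dually to $q_{11}S(q_{12})=s(\varepsilon(q_1))$.

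For $(\Rightarrow)$, suppose $\varsigma_Q$ is an isomorphism for every right $A$-comodule $Q$. Specialising to $Q=A$ with coaction $\Delta$, pick a representative $\varsigma_A^{-1}([1_A]\bullet a)=\sum_i a_i^{[1]}\circ a_i^{[2]}\in A\circ A$ and define the antipode by
$$S(a):=\sum_i a_i^{[1]}\,t(\varepsilon(a_i^{[2]})).$$
I would first check that this is a well-defined $k$-linear map, the sum being independent of the chosen representative thanks to $R$-bilinearity of $\varsigma_A^{-1}$ and $\varepsilon$ being an $R$-bimodule map. The equivariance relations $S(as(r))=t(r)S(a)$ and $S(t(r)a)=S(a)s(r)$ would follow from the naturality of $\varsigma$ applied to the left and right $R$-multiplication maps on $A$ --- these being $A$-comodule morphisms precisely because $s(r),t(r)$ are central. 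Finally, the defining identities $a_1S(a_2)=s(\varepsilon(a))$ and $S(a_1)a_2=t(\varepsilon(a))$ are extracted from $\varsigma_A\varsigma_A^{-1}=\id$ and $\varsigma_A^{-1}\varsigma_A=\id$ respectively, after composing with appropriate counit operations.

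The main obstacle I expect is not conceptual but notational: three distinct quotients --- $[Q,R]$, the $R$-balancing of $\bullet$, and the $R\otimes R$-balancing of $\circ$ --- must be tracked carefully together with the interplay of $s(r)$ and $t(r)$. Centrality of $s$ and $t$ is the key lever throughout, because it lets factors $s(r),t(r)$ commute past arbitrary elements of $A$ and so translate relations between the different tensor products; dropping that hypothesis would force one into the more delicate Takeuchi $\times_R$-bialgebra calculus.
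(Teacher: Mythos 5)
Your ``if'' direction is exactly the paper's: the formula $[q]\bullet a\mapsto q_0\circ S(q_1)a$ together with the three checks you list is precisely how the inverse of \eqref{eq:varsigma} is constructed there, so that half is fine. The gap is in the ``only if'' direction, and it is fatal as written. You specialise to $Q=A$ with coaction $\Delta$ and invert the element $[1_A]\bullet a$. But $\varsigma_A$ is right $A$-linear for the actions on the second tensor factors, and $\varsigma_A(1_A\circ a)=[1_A]\bullet 1_Aa=[1_A]\bullet a$ because $\Delta(1_A)=1_A\bullet 1_A$; hence $\varsigma_A^{-1}([1_A]\bullet a)=1_A\circ a$ and your recipe returns $S(a)=1_A\,t(\varepsilon(a))=t(\varepsilon(a))$, which is not an antipode (for instance $a_1S(a_2)=a_1\cdot\varepsilon(a_2)=a$, not $s(\varepsilon(a))$). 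Independently of this collapse, the assignment $u\circ v\mapsto u\,t(\varepsilon(v))$ does not descend to $A\circ A$: the $\circ$-balancing identifies $s(r)\,u\,t(r')\otimes v$ with $u\otimes s(r)\,v\,t(r')$, and applying $\varepsilon$ followed by $t$ to the second leg turns the $s(r)$ on the right-hand side into $t(r)$, leaving $s(r)uw$ against $t(r)uw$ for a central $w$. So the well-definedness check you announce cannot go through, and no placement of $\varepsilon$ on either leg repairs it.

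The fix --- and the actual content of the paper's argument --- lies in the choice of comodule. The paper takes $Q=I\bullet A\cong R\otimes A$ with right action $(r\otimes a)r'=r\otimes a\,t(r')$, for which $\varsigma_Q$ becomes $\widehat\varsigma:A\star A\to A\bullet A$, $a\star b\mapsto a_1\bullet a_2b$, where $A\star A$ is balanced only over the $t$-actions on both legs. There the element $a\bullet 1_A$ has a genuinely nontrivial preimage $a^+\star a^-$ (the translation map), and $S(a):=t(\varepsilon(a^+))a^-$ is well defined on $A\star A$ precisely because the $s$-versus-$t$ mismatch above does not arise. The antipode axioms are then extracted from right $A$-linearity, left $R$-linearity and left $A$-colinearity of $\widehat\varsigma^{\,-1}$, much in the spirit you propose --- but only after this change of comodule.
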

\begin{proof} 
If $A$ is a Hopf algebroid, then the inverse of (\ref{eq:varsigma}) is given
by 
$$
\varsigma_Q^{-1}:Q/[Q,R]\bullet A\rightarrow Q\circ A,\qquad 
[q]\bullet a\mapsto q_0\circ S(q_1)a.
$$
In order to see that it is well defined, note that --- 
since the $A$-coaction on $Q$ is morphism of $R$-bimodules,
by (\ref{eq:center}) and the first line in (\ref{eq:antipode_ax}), --- 
$(r\cdot q)_0\circ S((r\cdot q)_1)a=
(q\cdot r)_0\circ S((q\cdot r)_1)a=
q_0\circ S(q_1)(r\cdot a)$, 
for all $a\in A$, $q\in Q$ and $r\in R$. Also --- by
(\ref{eq:center}) and the first line in
(\ref{eq:antipode_ax}), --- $q\cdot r\circ S(b)a=q\circ S(r\cdot b)a$,
for all  $a,b\in A$, $q\in Q$ and $r\in R$. 
It is indeed the inverse of (\ref{eq:varsigma}) since
$$
\varsigma_Q^{-1}\varsigma_Q(q\circ a)=
q_0\circ S(q_1)q_2 a=
q_0\circ t(\varepsilon(q_1))a=
q_0\circ a\cdot \varepsilon(q_1)=
q_0\cdot \varepsilon(q_1)\circ a=
q\circ a,
$$
$$
\varsigma_Q\varsigma_Q^{-1}([q]\bullet a)=
[q_0]\bullet q_1S(q_2)a=
[q_0]\bullet s(\varepsilon(q_1))a=
[q_0]\bullet \varepsilon(q_1)\cdot a=
[q_0]\cdot \varepsilon(q_1)\bullet a=
[q_0\cdot \varepsilon(q_1)]\bullet a=
[q]\bullet a. 
$$

Conversely, assume that (\ref{eq:varsigma}) is an isomorphism, for any right
$A$-comodule $Q$. Then it is an isomorphism in particular for $Q=I\bullet
A\cong R\otimes A$ with right $R$-action $(r\otimes a)r'=r\otimes a t(r')$ and
$A$-coaction $r\otimes a\mapsto (r\otimes a_1)\bullet a_2$.
So we obtain an isomorphism 
\begin{equation}\label{eq:varsigmbar}
\xymatrix@C=15pt {
A\star A :=A\otimes A/\{a\cdot r\otimes b-a\otimes b\cdot r\ |\ r\in R\}
\ar[r]^-\cong 
&(I\bullet A)\circ A\ar[r]^-{\varsigma_{I\bullet A}}
&((I\bullet A)\circ J)\bullet A\ar[r]^-\cong
&A\bullet A,}
\end{equation}
to be denoted by $\widehat{\varsigma}$. The first isomorphism in
\eqref{eq:varsigmbar} is established by the mutually inverse maps  
$$
\begin{array}{ll}
A\star A\rightarrow (I\bullet A)\circ A,\qquad 
&a\star b\mapsto ((1_R\otimes 1_R)\bullet a)\circ b,
\quad \textrm{and}\\
(I\bullet A)\circ A \to A\star A,\qquad 
&((r\otimes r') \bullet a)\circ b\mapsto r'\cdot a\star r\cdot b.
\end{array}
$$
The last isomorphism in (\ref{eq:varsigmbar}) is established by the mutually
inverse maps  
$$
\begin{array}{ll}
(I\bullet A)\circ J \cong I\bullet A/[I\bullet A,R] \to A, \qquad
&[(r\otimes r')\bullet a]\mapsto r'\cdot a\cdot r
\quad \textrm{and}\\
A \to (I\bullet A)\circ J \cong I\bullet A/[I\bullet A,R], \qquad
&a\mapsto [(1_R\otimes 1_R)\bullet a].
\end{array}
$$
With these isomorphisms at hand, the explicit form of (\ref{eq:varsigmbar}) is
$\widehat{\varsigma}(a\star b)=a_1\bullet a_2b$, for $a,b\in A$. 
Set $a^+\star a^-:=\widehat{\varsigma}^{\, -1}(a\bullet 1_A)$;
then $\widehat{\varsigma}^{\, -1}(a\bullet b)= a^+\star a^-b$, for all $a,b\in
A$, since $\widehat{\varsigma}$ and thus also its inverse are right $A$-module
maps. Put $S(a):=t(\varepsilon(a^+))a^-$, for all $a\in A$. It is well-defined
since $\varepsilon$ is a right $R$-module map, since $t$ is multiplicative and
by (\ref{eq:center}). We claim that $S$ is an antipode of $A$.

Since $\widehat\varsigma$ is a morphism of right $A$-modules, so is its
inverse. Hence 
$$
(t(r)a)^+\star (t(r)a)^-=
\widehat\varsigma^{\,-1}(t(r)a\bullet 1_A)=
\widehat\varsigma^{\,-1}(a\bullet s(r))=
a^+\star a^-s(r), \qquad \forall a\in A,\ r\in R.
$$
Since the comultiplication on $A$ is a morphism of left $R$-modules, so is
$\widehat\varsigma$. Hence also its inverse is a morphism of left $R$-modules
in the sense that 
$$  
(as(r))^+\star (as(r))^-=a^+s(r)\star a^-, \qquad \forall a\in A,\ r\in R.
$$
With these identities at hand,
\begin{eqnarray*}
S(t(r)a)
&=&t(\varepsilon((t(r)a)^+))(t(r)a)^-
=t(\varepsilon(a^+))a^-s(r)
=S(a)s(r)\qquad \textrm{and}\\
S(as(r))
&=&t(\varepsilon((as(r))^+))(as(r))^-
=t(\varepsilon(a^+s(r)))a^-
=t(r)t(\varepsilon(a^+))a^-
=t(r)S(a),
\end{eqnarray*}
for any $a\in A, r\in R$. The penultimate equality in the second line holds
since $\varepsilon$ is a morphism of left $R$-modules and $t$ is
multiplicative. 

From $\widehat{\varsigma}^{\, -1}.\widehat{\varsigma}=A\star A$, it follows
that 
\begin{equation}\label{eq:multi}
{a_1}^+\star {a_1}^{-}a_2=a\star 1_A,\qquad \forall a\in A.
\end{equation}
Since $\widehat{\varsigma}$ is a left $A$-comodule map; i.e. 
$(\Delta\bullet A).\widehat{\varsigma}=
(A\bullet\widehat{\varsigma}).(\Delta\star A)$, also
$\widehat{\varsigma}^{\, -1}$ is a left $A$-comodule map. That is, 
\begin{equation}\label{eq:comulti} 
{a^+}_1\bullet {a^+}_2\star a^-=
a_1\bullet {a_2}^+\star {a_2}^-,\qquad \forall a\in A.
\end{equation}
Composing both sides of the equality 
$\mu=\big(
\xymatrix@C=12pt{
A\star A\ar[r]^-{\widehat{\varsigma}}
&A\bullet A\ar[r]^-{\varepsilon\bullet A}
&J\bullet A\ar[r]^-\cong
&A}\big)
$ by $\widehat{\varsigma}^{\, -1}$, we obtain
\begin{equation}\label{eq:counit}
a^+a^-=s(\varepsilon(a)),\qquad \forall a\in A.
\end{equation} 
Then by (\ref{eq:multi}) it follows that 
$$
S(a_1)a_2=t(\varepsilon({a_1}^+){a_1}^-a_2=
t(\varepsilon(a))1_A=
t(\varepsilon(a)),$$
and by (\ref{eq:comulti}) and (\ref{eq:counit}),
$$
a_1S(a_2)=
a_1t(\varepsilon({a_2}^+){a_2}^-=
{a^+}_1t(\varepsilon({a^+}_2)a^-
=a^+a^-=
s(\varepsilon(a)),
$$
for any $a\in A$.
This proves that $A$ is a Hopf algebroid.
\end{proof}

A right module over a bimonoid $A$ in $\mathsf{bim}(R)$ is, equivalently, a
right module over the constituent $k$-algebra. It is an $R$-bimodule via the
actions induced by $s$ and $t$. A morphism of $A$-modules in $\mathsf{bim}(R)$
is a morphism of modules over the constituent $k$-algebras; it is
automatically a morphism of $R$-bimodules. 

A right comodule of a bimonoid $A$ in $\mathsf{bim}(R)$ is an $R$-bimodule $Q$ 
equipped with a coassociative and counital coaction $Q\to Q\bullet A$ which is
a morphism of $R$-bimodules. A morphism of $A$-comodules in $\mathsf{bim}(R)$
is an $R$-bimodule map which is compatible with the coactions in the evident
sense. 

A Hopf module $M$ over a bimonoid $A$ in $\mathsf{bim}(R)$ is a right
$A$-module which is also an $A$-comodule via the left and right $R$-actions
induced by $s$ and $t$, respectively; such that the compatibility condition
$(m\cdot a)_0\bullet (m\cdot a)_1=m_0\cdot a_1\bullet m_1a_2$ holds, for all
$m\in M$ and $a\in A$. A morphism of $A$-Hopf modules is a morphism of modules
over the constituent $k$-algebras --- hence also a morphism of $R$-bimodules
--- which is compatible with the coactions in the evident sense.   

From Theorem \ref{thm:dual_fthm} and Proposition \ref{prop:hgd},
we obtain the following.  

\begin{corollary}
Let $R$ be a commutative algebra over a commutative ring $k$. Let $A$ be an
$R$-bialgebroid whose unit $R\otimes R\to A$ takes its values in 
the center of $A$ --- equivalently, let $A$ be a bimonoid in the duoidal
category $\mathsf{bim}(R)$. Then the following assertions are equivalent.
\begin{itemize}
\item[{(i)}] $A$ is a Hopf algebroid (via the given left bialgebroid
structure and the right bialgebroid structure obtained by interchanging the
roles of the unital maps $s$ and $t$).
\item[{(ii)}] The natural transformation (\ref{eq:varsigma}) is an
isomorphism. 
\item[{(iii)}] The canonical comparison functor --- from the category of
$R$-modules to the category of $A$-Hopf modules --- is an equivalence.
\end{itemize}
\end{corollary}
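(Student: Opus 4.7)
The corollary is essentially a direct synthesis of the two preceding results, so my plan is to simply verify that the hypotheses of Theorem \ref{thm:dual_fthm} hold for $\mathcal{M} = \mathsf{bim}(R)$ and then glue that theorem to Proposition \ref{prop:hgd}. There is no new mathematical content to extract; the task is only to check the hypotheses of a machine already built, and to confirm that the abstract description of $A$-Hopf modules in the duoidal sense matches the concrete one stated before the corollary.

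First I would verify the two standing assumptions on $\mathsf{bim}(R)$ required by Theorem \ref{thm:dual_fthm}. Idempotents split in any module category over a $k$-algebra (they split through the image of an idempotent, which is a subbimodule). Fully faithfulness of the functor
\[
G = \bigl(\xymatrix{\mathsf{bim}(R)_J \ar[r]^-{U_J} & \mathsf{bim}(R) \ar[r]^-{(-)\bullet I} & \mathsf{bim}(R)^I}\bigr)
\]
is precisely what Proposition \ref{prop:bim_tau_iso} gives, via the dual form of Proposition \ref{prop:idemp_ff}: since the multiplication $\varpi:J\circ J \to J$ is an isomorphism (the monad $(-)\circ J$ is idempotent), the dual hypothesis reduces to showing that $(M\bullet\tau)\circ J$ is an isomorphism whenever $M\circ\tau$ is, which is exactly the content of Proposition \ref{prop:bim_tau_iso}.

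Having discharged the hypotheses, Theorem \ref{thm:dual_fthm} applied to the bimonoid $A$ in $\mathsf{bim}(R)$ gives the equivalence (ii)$\Leftrightarrow$(iii), where (iii) asserts that the comparison functor $(-)\bullet A : \mathsf{bim}(R)_J \to \mathsf{bim}(R)^A_A$ is an equivalence. The equivalence (i)$\Leftrightarrow$(ii) is the content of Proposition \ref{prop:hgd} (which does all the real algebraic work: constructing the antipode from $\varsigma^{-1}$ and conversely). Chaining these two biconditionals finishes the proof.

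The only subtlety worth writing out explicitly — and the closest thing to an obstacle — is the translation of the abstract categorical source and target of the comparison functor into their concrete descriptions. On the source side, $\mathsf{bim}(R)_J \cong \mathsf{mod}(R)$ by Lemma \ref{lem:diag}, so the domain in (iii) is literally the category of $R$-modules. On the target side, I would briefly note that an $A$-Hopf module in $\mathsf{bim}(R)^A_A$ is a right $A$-module $M$ together with an $A$-coaction $M\to M\bullet A$ compatible via the diagram \eqref{eq2.2}; unpacking this, and using that the $R$-bimodule structure on $M$ is forced to coincide with the one induced by $s,t$ (because the action morphism is an $R$-bimodule map and $s,t$ are central), one recovers exactly the description of $A$-Hopf modules given in the paragraph preceding the corollary. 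With this identification made, the equivalence $(-)\bullet A$ from $\mathsf{mod}(R)$ to $A$-Hopf modules is the expected canonical comparison, and the corollary follows.
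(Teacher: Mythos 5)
Your proposal is correct and follows exactly the paper's own route: verify that idempotents split and that $G$ is fully faithful via Proposition \ref{prop:bim_tau_iso} and the dual of Proposition \ref{prop:idemp_ff}, then combine Theorem \ref{thm:dual_fthm} (for (ii)$\Leftrightarrow$(iii)) with Proposition \ref{prop:hgd} (for (i)$\Leftrightarrow$(ii)), using Lemma \ref{lem:diag} to identify $\mathsf{bim}(R)_J$ with $\mathsf{mod}(R)$. Nothing is missing.
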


\begin{noteadded}
Soon after we had submitted the first version of this paper (on the 5th of
December in 2012), two closely related papers \cite{AguCha} and \cite{MesWis}
appeared in the arXiv (although \cite{AguCha} was submitted for
publication much earlier). Their relation to our work is analyzed
in \cite{MesWis}, here we shortly recall that on the request of the referee. 

In \cite{AguCha}, Aguiar and Chase study the following situation. They
consider a bimonad $T$ on a monoidal category $\mathcal C$ (which can be taken
to be e.g. the bimonad $(-)\circ A$ induced on $(\mathcal M, \bullet)$ by a
bimonoid $A$ in a duoidal category $\mathcal M$); a $T$-comodule monad $S$
(which can be chosen to be $(-)\circ A$ as well); and an arbitrary comonoid
$c$ in $\mathcal C$ (which can be taken to be e.g. the $\circ$-monoidal unit
$I$ in the duoidal category $\mathcal M$). Associated to these data, there is
a category of generalized Hopf modules (with our choices it comes out as the
category $\mathcal M^A_A$ of Hopf modules in Definition \ref{def:Hopf_mod});
and a comparison functor $K$ from the category of $c$-comodules to this
category of generalized Hopf modules (it reduces to the same functor $K$
in \eqref{eq:triangle} in our case). In \cite[Theorem 5.8]{AguCha}, under
certain assumptions on $S$ and $c$, it is proven that $K$ is an equivalence if
and only if a canonical `Galois morphism' (reducing to \eqref{eq:beta^A} with
our choices) is an isomorphism. 

Comparing the assumptions in \cite[Theorem 5.8]{AguCha} with the dual
forms of the Beck criteria (see \cite[page 100, Theorem 3.14]{M. Barr},
cf. Section \ref{sec:Dubuc-Beck}), they imply in turn that the composite of
the forgetful functor $U_c$ corresponding to the category of $c$-comodules,
and of the left adjoint $F_S$ of the forgetful functor corresponding to the
Eilenberg-Moore category of $S$-modules, is comonadic (whence the conclusion
follows by \cite[Theorem 1.7]{GomTor} or \cite[Theorem 4.4]{Mes}, see Section 
\ref{sec:Dubuc-Beck}). Indeed, $F_SU_c$ is a composite of two left adjoint
functors hence it is left adjoint. Since $S$ is assumed to be conservative, so
is $F_S$ and thus also the composite functor $F_SU_c$. Taking an
$F_SU_c$-contractible equalizer pair $(f,g)$, it is taken by $U_c$ to an
$F_S$-contractible equalizer pair $(U_c f,U_c g)$. By assumption, their
equalizer is created by $F_S$. Moreover, by the assumption that $(-)\otimes c$
and $(-)\otimes c\otimes c$ preserve the equalizer of $(U_cf,U_cg)$, it
follows that $U_c$ creates the equalizer of $(f,g)$.   

Applying \cite[Theorem 5.8]{AguCha} to the comparison functor $K$ in
\eqref{eq:triangle}, this means that {\em assumptions are made on $A$}, which
imply the comonadicity of the functor in the bottom row of
\eqref{eq:triangle}. As a conceptual difference, in Section \ref{sec:fthm} of
this paper we make no assumption on $A$. We prove the comonadicity of the
functor in the bottom row of \eqref{eq:triangle} from assumptions {\em on the
duoidal category $\mathcal M$ alone}. 

In Mesablishvili and Wisbauer's paper \cite{MesWis}, a slight generalization
and an alternative proof of our Theorem \ref{thm:fthm} is presented. In their
generalized version, the functor in the bottom row of the diagram below is
only required to be separable (not necessarily fully faithful). In their proof,
they avoid the explicit construction of the inverse of the comparison functor
$K$ in \eqref{eq:triangle}. Instead, they observe that there is a commutative
diagram
$$
\xymatrix{
\mathcal M^I \ar[r]^-{U^I}\ar@{=}[d]&
\mathcal M\ar[r]^-{(-)\circ A}&
\widetilde {\mathcal M}_A\ar[d]^-{(-)\circ_A J}\\
\mathcal M^I \ar[r]_-{U^I}&
\mathcal M\ar[r]_-{(-)\circ J}&
\mathcal M_J}
$$
where $\widetilde {\mathcal M}_A$ stands for the Kleisli category of the monad
$(-)\circ A$ (i.e. the category of free right $A$-modules). Since the left
adjoint functor in the bottom row is separable, and idempotent morphisms in
$\mathcal M$ (and thus in $\mathcal M^I$) split by assumption, it follows by
\cite[Proposition 1.13]{MesWis} that the functor $U^I(-)\circ A$ in the top
row reflects isomorphisms and any $U^I(-)\circ A$-contractible pair possesses
a contractible equalizer in $\mathcal M^I$. These properties imply that
composing the functor in the top row with the fully faithful embedding 
$\widetilde {\mathcal M}_A \to \mathcal M_A$, we obtain a comonadic functor: 
that in the bottom row of \eqref{eq:triangle}. In light of \cite[Theorem
1.7]{GomTor} or \cite[Theorem 4.4]{Mes} (see Section \ref{sec:Dubuc-Beck}),
this provides an alternative proof of Theorem \ref{thm:fthm} (although not
yielding the explicit form of the inverse of the comparison functor $K$
in \eqref{eq:triangle}). 
\end{noteadded}

\end{document}